\numberwithin{equation}{section}
\newtheorem{thm}{Theorem}[section]
\newtheorem{lem}[thm]{Lemma}
\newtheorem{cor}[thm]{Corollary}
\newtheorem{Prop}[thm]{Proposition}
\newtheorem{Def}[thm]{Definition}
\newcommand{\R}{\mathbb{R}}
\def\al {\alpha}
\def\lam {\lambda}
\begin{document}
	
	\title[Stein-Weiss type inequality]{Stein-Weiss type inequality on the upper \\ half space and its applications}
	
	\author[X.\ Li]{Xiang Li}
	\author[Z.\ Shen]{Zifei Shen}
	\author[M.\ Squassina]{Marco Squassina}
	\author[M.\ Yang]{Minbo Yang}

	\address{Xiang Li, Zifei Shen, Minbo Yang \newline\indent Department of Mathematics, Zhejiang Normal University, \newline\indent
		Jinhua, Zhejiang, 321004, People's Republic of China}
	\email{X. Li: xiangli@zjnu.edu.cn}
		\email{Z. Shen: szf@zjnu.edu.cn}
	\email{M. Yang: mbyang@zjnu.edu.cn}
	
\address{Marco Squassina \newline\indent College of Science
	\newline\indent
	Princess Nourah Bint Abdul Rahman University
	\newline\indent
	Saudi Arabia, Riyadh, PO Box 84428}
\email{marsquassina@pnu.edu.sa}
\address{Dipartimento di Matematica e Fisica
\newline\indent
Universit\`a Cattolica del Sacro Cuore
\newline\indent
Via dei Musei 41, Brescia, Italy}
\email{marco.squassina@unicatt.it}

	\subjclass[2020]{35B40, 45P05, 35B53}
	\keywords{Stein-Weiss type inequality; The method of moving plane; Classification; Extremal functionals}
	
	%\thanks{$^\S$Minbo Yang is corresponding author. Minbo Yang is partially supported by NSFC (11971436, 12011530199) and ZJNSF (LZ22A010001, LD19A010001).}
		\thanks{Minbo Yang is partially supported by NSFC (11971436, 12011530199) and ZJNSF (LZ22A010001, LD19A010001).}
	
	\begin{abstract}
In this paper, we establish some Stein-Weiss type inequalities with general kernels on the upper half space and study the existence of extremal functions for this inequality with the optimal constant. Furthermore, we also investigate the regularity, asymptotic estimates, symmetry and non-existence results of the positive solutions of the corresponding Euler-Lagrange integral system.  As an application, we finally derive some Liouville type results for the Hartree type equations in the half space.
\end{abstract}
	
	\maketitle
	
%	
%	\begin{center}
%		\begin{minipage}{9.5cm}
%			\small
%			\tableofcontents
%		\end{minipage}
%	\end{center}
%	\medskip

	\section{Introduction and main results}
The research of inequality problems have received a great deal of attention in recent years. The inequalities such as Sobolev inequality, Hardy-Littlewood-Sobolev inequality (HLS for short) and Stein-Weiss type inequality play an essential role in the theory of partial differential equations and geometric analysis. For instance, the HLS inequality is widely used to investigate the qualitative properties and classification of solutions. Moreover, the HLS inequality means that sharp Sobolev inequality, as well as Gross's logarithmic Sobolev inequality, which are key ingredients in the study of Yamabe problem and Ricci flow problem \cite{Gross}. Let us first review the classical version of Hardy-Littlewood-Sobolev inequality, which is found by Hardy, Littlewood and Sobolev in \cite{Hardy21,Sobolev21}.

%Here it should be noted that the Hardy-Littlewood-Sobolev inequality(HLS for short) and its weighted case provide important approaches for the study of elliptic equations or geometric problems.
\begin{Prop}
Let $1<p,~q^{\prime}<\infty$, $0<\mu<n$, $f\in L^{p}(\R^n)$, and $g\in  L^{q^{\prime}}(\R^n)$. Then there exists a sharp constant $C\left(p,q^{\prime}, \mu, n\right)$ such that
\begin{equation}\label{HLs}
	\int_{\R ^{n}}\int_{\R^n}\frac{f(y)g(x)}{|x-y|^{\mu}}dxdy\leq C(p,q^{\prime},n,\mu)\|f\|_{p}\|g\|_{q^{\prime}},
\end{equation}
where $\frac{1}{p}+\frac{1}{q^{\prime}}+\frac{\mu}{n}=2$ and $C(p,q^{\prime},n,\mu)$ is independent of $f$ and $g$.
Moreover, the optimal constant satisfy
\begin{equation}\nonumber
C\left(p,q^{\prime}, \mu, n\right)\leq
\frac{n}{(n-\mu)}\left(|\mathbb{S}^{n-1}|/n\right)^{\frac{\mu}{n}}\frac{1}{pq^{\prime}}\left(\left(\frac{\mu/n}{1-1/p}\right)^{\mu/n}+\left(\frac{\mu/n}{1-q^{\prime}}\right)^{\mu/n}\right),
\end{equation}
where we use $q^{\prime}$ to stand for the dual index of $q$.
\end{Prop}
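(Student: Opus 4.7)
The plan is to prove the inequality via the classical Hardy-Littlewood strategy: symmetrization, splitting of the Riesz kernel at a free radius, and optimization in that radius. First I would invoke the Riesz rearrangement inequality to reduce to the case where $f$ and $g$ are non-negative, radial, and non-increasing, since that replacement does not decrease the left-hand side yet preserves $\|f\|_p$ and $\|g\|_{q'}$. Setting $T_\mu f(x)=\int_{\R^n} f(y)|x-y|^{-\mu}\,dy$, by duality it suffices to show $\|T_\mu f\|_q\leq C\|f\|_p$ for $q$ conjugate to $q'$; the balance condition $1/p+1/q'+\mu/n=2$ is precisely what makes this bound scale invariant.

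Second, I would split the Riesz kernel at a free radius $R>0$: write $|x|^{-\mu}=K_1+K_2$ with $K_1=|x|^{-\mu}\chi_{B_R}$ and $K_2=|x|^{-\mu}\chi_{B_R^c}$, and apply Young's convolution inequality to each piece at suitably chosen exponents (one with $\mu r_1<n$ so that $K_1\in L^{r_1}$, the other with $\mu r_2>n$ so that $K_2\in L^{r_2}$). The radial norms are explicit:
\[
\|K_1\|_{r_1}=\Bigl(\tfrac{|\mathbb{S}^{n-1}|}{n-\mu r_1}\Bigr)^{1/r_1}R^{(n-\mu r_1)/r_1},\qquad \|K_2\|_{r_2}=\Bigl(\tfrac{|\mathbb{S}^{n-1}|}{\mu r_2-n}\Bigr)^{1/r_2}R^{(n-\mu r_2)/r_2},
\]
with opposite-signed powers of $R$, so combining them yields an upper bound that is genuinely minimized at a specific choice of $R$.

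Third, combining the two pieces and optimizing over $R$ produces the stated explicit constant: the sum of two power-like terms $\bigl(\tfrac{\mu/n}{1-1/p}\bigr)^{\mu/n}+\bigl(\tfrac{\mu/n}{1-1/q'}\bigr)^{\mu/n}$ arises from balancing the positively and negatively scaled contributions in $R$, while the prefactor $\tfrac{n}{n-\mu}(|\mathbb{S}^{n-1}|/n)^{\mu/n}/(pq')$ records the sphere's surface measure together with the denominators produced by the radial integrals of the two kernel pieces. Collecting everything and undoing the duality gives the claimed inequality with the stated bound on $C(p,q',n,\mu)$.

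The main obstacle will be the disciplined bookkeeping of constants: qualitative boundedness $T_\mu:L^p\to L^q$ is a one-line consequence of Marcinkiewicz interpolation once the two weak-type bounds are in hand, but isolating the specific numerical combination stated in the proposition requires performing the splitting and optimization by hand, since Marcinkiewicz would bleed extra endpoint constants. A minor technical point: the hypothesis $1<p,q'<\infty$ is exactly what guarantees that the auxiliary exponents $r_i$ can be placed strictly inside their admissible ranges, so the splitting is well defined throughout.
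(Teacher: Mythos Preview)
The paper does not prove this proposition; it is quoted as classical background with references to Hardy--Littlewood and Sobolev, and the explicit bound on the constant is the one recorded in Lieb--Loss, \emph{Analysis}, Theorem~4.3. So there is no in-paper argument to compare against.

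Your overall strategy---split the kernel at a free radius and optimize---is the right shape, but the mechanism you name has a gap. Young's convolution inequality $\|K\ast f\|_q\le\|K\|_r\|f\|_p$ forces the single relation $1/r=1+1/q-1/p$, and under the balance condition $1/p+1/q'+\mu/n=2$ this pins $r=n/\mu$. Neither truncation $K_1=|x|^{-\mu}\chi_{B_R}$ nor $K_2=|x|^{-\mu}\chi_{B_R^c}$ lies in $L^{n/\mu}$, since $\int_{B_R}|x|^{-n}\,dx$ and $\int_{B_R^c}|x|^{-n}\,dx$ both diverge. Choosing distinct exponents $r_1<n/\mu<r_2$, as you propose, does make $K_i\in L^{r_i}$, but Young then lands $K_1\ast f$ and $K_2\ast f$ in \emph{different} target spaces $L^{q_1}$ and $L^{q_2}$, and an $L^{q_1}$ bound plus an $L^{q_2}$ bound on two separate summands does not produce an $L^q$ bound on their sum. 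The same obstruction hits the bilinear form via trilinear Young.

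The argument that actually yields the stated constant (and whose optimization step your proposal is echoing) is the layer-cake computation: write $f=\int_0^\infty\chi_{\{f>a\}}\,da$, $g=\int_0^\infty\chi_{\{g>b\}}\,db$, and $|z|^{-\mu}=\mu\int_0^\infty c^{-\mu-1}\chi_{\{|z|<c\}}\,dc$; bound the resulting triple integral of indicator convolutions by the minimum of the three volumes; then split and optimize in the kernel level-set variable~$c$. No rearrangement is needed, and the constants fall out exactly as stated (with the evident misprint $1-q'$ in the proposition corrected to $1-1/q'$, as you already wrote).
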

 If one of $p$ or $q^{\prime}$ equals $2$ or $p=q^{\prime}$, the existence of extremals to the HLS inequality with the optimal constant was discussed by Lieb \cite{Lieb1}. Moreover, Lieb considered the value of the sharp constant to inequality \eqref{HLs}. However, for $p\not=q^{\prime}$, either the sharp constant or the extremals is not known. Later, with the aid of the reflection positivity of inversions in spheres, Frank and Lieb \cite{FrankL} explored the best constant and extremals to this inequality in the special case $p=q^{\prime}=\frac{2n}{2n-\mu}$. It was also studied by Carlen and Loss \cite{CarlenLoss} via the competing symmetry argument. For the case $\mu=n-2$, Carlen et al. \cite{Carlen1} simplified the proof and obtained the sharp version of inequality \eqref{HLs} with $n\geq3$. More generally, Frank and Lieb \cite{FrankL2} considered the sharp constant of inequality \eqref{HLs} under the more general parameters by using the rearrangement argument. The HLS inequality on the Heisenberg groups was firstly established in \cite{Folland} by Folland and Stein, and the authors in \cite{FrankL1} studied the sharp version of this inequality in the special case. Dou and Zhu \cite{DouZ} investigated the reversed HLS inequality.  The reversed HLS inequality was proved by Dou, Guo and Zhu \cite{DouGZ} via the subcritical argument. By virtue of the layer cake representation, Ng$\hat{o}$ and Nguyen \cite{Ngo} employed a new method to prove the reversed HLS inequality. 

 Stein and Weiss in \cite{ESGW,ELML} established the Stein-Weiss type inequality,
\begin{equation}\label{whls}
\int_{\R ^{n}}\int_{\R^n}\frac{f(y)g(x)}{|x|^{\al}|x-y|^{\mu}|y|^{\beta}}dxdy\leq C(p,q^{\prime},n,\mu,\al, \beta)\|f\|_{p}\|g\|_{q^{\prime}},
\end{equation}
where $p,q^{\prime}>1$, $0<\mu<n$, $\alpha+\beta\geq0$ and the double weights also satisfies $\frac{1}{p}+\frac{1}{q^{\prime}}+\frac{\al+\beta+\mu}{n}=2$ and $1-\frac{1}{p}-\frac{\mu}{n}<\frac{\al}{n}<1-\frac{1}{p}$. It is well known that, for the case $p<q$, Lieb \cite{Lieb1} proved that the extremals for this inequality can be obtained with the restriction $\alpha,\beta\geq0$, and he also studied the non-existence of extremal functions for inequality \eqref{whls} in the case $p=q$. Moreover, the same result as Lieb's was obtained by Herbst \cite{Herbst} in the case $\mu=n-1,p=q=2,\alpha=0$ and $\beta=1$. While in \cite{Beckner1, Beckner2}, Beckner established a new equivalent formulation to study the best constant of the inequality with $p=q$. For the case $p\neq q$, the author further constructed the estimates of inequality \eqref{whls} with the best constant in \cite{Beckner2}. Later, by applying the concentration compactness principle, Chen, Lu and Tao \cite{CLTh} classified the extremal functions for inequality \eqref{whls} under the hypothesis $p<q$ and $\alpha+\beta\geq0$. In particular, the author in \cite{CLTh} generalized the results to the Heisenberg groups. It is worth mentioning that Han et al.\cite{HanLu} derived the Stein-Weiss type inequality on the Heisenberg group via the weighted integral operators. In addition, Chen et al.\cite{ChenLLT} considered the reversed Stein-Weiss type inequality. Furthermore, the Euler-Lagrange equations related to the HLS and Stein-Weiss type inequality have also attracted a lot of interest. On one hand, for the Euler-Lagrange equation related to HLS inequality \eqref{HLs}, the authors in \cite{ChenLi} precisely classified the positive solutions to this integral systems via moving plane argument and regularity lifting technique. On the other hand, the authors in \cite{ChenC} studied the property of positive solutions of the Euler-Lagrange system corresponding to Stein-Weiss inequality \eqref{whls}. Furthermore, Bebernes et al.\cite{BLei} studied the asymptotics of positive solutions of the weighted integral system. For recent development and applications of HLS and Stein-Weiss type inequalities, we refer to \cite{refer1,refer2} and the references therein. 

In fact, many people are also interested in the extremal functions and sharp constant of the integral inequalities on upper half space. In \cite{Hang1}, Hang, Wang and Yan firstly established the following integral inequality with harmonic kernel, 
\begin{equation}\label{hang}
	\int_{\mathbb{R}^{n}_{+}}\int_{\partial\mathbb{R}^{n}_{+}}\frac{x_{n}f(y)g(x)}{\left(|x^{\prime}-y|^{2}+x_{n}^{2}\right)^{\frac{n}{2}}}dxdy\leq C(n)\|f\|_{L^{p}\left(\partial\mathbb{R}^{n}_{+}\right)}\|g\|_{L^{q^{\prime}}\left(\mathbb{R}^{n}_{+}\right)},~~ x=\left(x^{\prime},~x_{n}\right)\in\mathbb{R}^{n}_{+},~ y\in\partial\mathbb{R}^{n}_{+},
\end{equation}
where $1<p,q^{\prime}<\infty$, satisfying $\frac{n-1}{n}\frac{1}{p}+\frac{1}{q^{\prime}}=1$. Actually, inequality \eqref{hang} can be
regarded as Carleman’s inequality in the higher dimension, and its implies the sharp isoperimetric inequality \cite{Hang2}. The authors considered the existence of extremal function for inequality \eqref{hang} through the method of symmetrization and the concentration compactness principle, and they also discussed a sequence of qualitative properties of extremal functions including regularity and symmetry. Later, Chen \cite{Chens} generalized the above inequality to the more general inequality \eqref{hang} with poly-harmonic extension. More precisely, Chen obtained the following integral inequality on the upper half space,
\begin{equation}\nonumber
\|P_{\mu}f\|_{L^{\frac{np}{n-1}}(\mathbb{R}^{n}_{+})}\leq C(n,\mu,p)\|f\|_{L^{p}(\partial\mathbb{R}^{n}_{+})},
\end{equation}
for all $1<p\leq \infty$ and $n\geq2$, where
\begin{equation}\nonumber
	P_{\mu}(f)(x):=\int_{\partial\mathbb{R}^{n}_{+}}\frac{x_{n}^{\mu+1-n}}{\left(|x^{\prime}-y|^{2}+x_{n}^{2}\right)^{\frac{\mu}{2}}}f(y)dy
\end{equation}
is the poly-harmonic extension operator. Moreover, the author classified the positive extremals via the rearrangement method in the case $p=\frac{2(n-1)}{2n-2-\mu}$. Furthermore, Dou and Zhu \cite{DouZ1} explored the sharp HLS inequality on the half space. With the help of Riesz's rearrangement technique, they proved the existence of extremals and computed explicitly the sharp constant.  Recently, Gluck \cite{MG} obtained the following sharp inequalities with the a family of kernels on the upper half space 
\begin{equation}\label{gluck}
	\left|	\int_{\mathbb{R}^{n}_{+}}\int_{\partial\mathbb{R}^{n}_{+}}K\left(x^{\prime}-y,x_{n}\right)f(y)g(x)dxdy\right|\leq C(n,\mu,\lambda,p)\|f\|_{L^{p}\left(\partial\mathbb{R}^{n}_{+}\right)}\|g\|_{L^{q^{\prime}}\left(\mathbb{R}^{n}_{+}\right)},
\end{equation}
where $K$ is a family of kernels
\begin{equation}\nonumber
	K(x)=K_{\mu,\lambda}(x)=\frac{x_{n}^{\lam}}{\left(|x^{\prime}|^{2}+x_{n}^{2}\right)^{\frac{\mu}{2}}}.
\end{equation}
It's worth showing that, this kernel $K$ includes the Riesz kernel and the classical Poisson kernel. By a subcritical method, Gulck computed the best constant for a family of HLS type inequalities \eqref{gluck}, and the author gave a precise classification of the related extremals via the method of moving sphere. Liu \cite{Liuzhao} generalized the Hardy-Littlewood-Sobolev inequality with
general kernel in the conformal invariant case for all critical indices. The reverse inequalities on the upper half spaces are also widely studied. In \cite{DHL}, the authors proved the reversed Hardy-Littlewood-Sobolev inequality with extended kernel $K$.
For the doubly weighted case, the Stein-Weiss type inequality on the half space was established by Dou \cite{Dou}, the author also studied the existence of extremal functions. When the kernel function in \eqref{hang} is replaced by the fractional Poisson kernel, Chen, Lu and Tao \cite{ChenStein} investigated the following inequality with the double weights,
\begin{equation}\label{ChenL}
	\int_{\mathbb{R}^{n}_{+}}\int_{\partial\mathbb{R}^{n}_{+}}|y|^{-\alpha}P(x,y,\mu)f(y)g(x)|x|^{-\beta}dxdy\leq C(n,p,q^{\prime},\alpha,\beta,\mu)\|f\|_{L^{p}\left(\partial\mathbb{R}^{n}_{+}\right)}\|g\|_{L^{q^{\prime}}\left(\mathbb{R}^{n}_{+}\right)},
\end{equation}
where $1<p,q^{\prime}<\infty$, $2\leq\mu<n$ satisfying
\begin{equation}\nonumber
\begin{aligned}
\alpha<\frac{n-1}{p^{\prime}}, \beta<\frac{n+q}{q}, \alpha+\beta\geq0,\\
\frac{n-1}{n}\frac{1}{p}+\frac{1}{q^{\prime}}+\frac{\alpha+\beta+2-\mu}{n}=1,
\end{aligned}
\end{equation}
 and $P(x,y,\mu)=\frac{x_{n}}{\left(|x^{\prime}-y|^{2}+x_{n}^{2}\right)^{\frac{n+2-\mu}{2}}}$. By applying the rearrangement approach and Lorentz interpolation inequality, they studied the existence of extremals of the sharp constant for inequality \eqref{ChenL}. Moreover, they explicit classified the extremals of this inequality via the regularity lifting argument and Poho$\check{z}$aev type identity. Especially, if $\alpha=\beta=0$ in \eqref{ChenL}, the optimal inequality with the fractional Poisson kernel was established by Chen et al.\cite{ChenHLS}. Meanwhile, Chen and his collaborators \cite{ChenRe}, Tao \cite{Tao} considered the reversed Stein-Weiss type inequality on the upper half space respectively.

In this paper, we are devoted to study the existence of extremal functions of the Stein-Weiss type inequality with the optimal constant with a general kernel in the half space $\mathbb{R}^{n}_{+}$, that is to generalize the results obtained by Gluck in \cite{MG} to the double weighted case and to classify all positive extremal functions of Euler-Lagrange equations corresponding to this inequality. First of all, we will investigate the Stein-Weiss type inequality with a general kernel. The first main result of this paper is the following integral inequality.
\begin{thm}\label{thm1}
Let $n\geq3$, $1<p, q^{\prime}<\infty$, $\lambda\geq0$, $\frac{n-1}{n}\frac{1}{p}+\frac{1}{q^{\prime}}\geq1$, $0<\mu<n-2\lambda$ and $\frac{\mu-2\lambda}{2n}+\frac{\mu}{2(n-1)}<1$ satisfies
\begin{equation}\label{in1}
\frac{n-1}{n}\frac{1}{p}+\frac{1}{q^{\prime}}+\frac{\alpha+\beta+\mu-\lambda}{n}=\frac{2n-1}{n}
\end{equation}
with $\alpha<\frac{n-1}{p^{\prime}}$, $\beta<\frac{n+q}{q}$ and $\alpha+\beta\geq0$. Then there exists some positive constant $C(n,p,q^{\prime},\alpha,\beta,\lambda,\mu)$ such that for any functions $f\in L^{p}\left(\partial\mathbb{R}^{n}_{+}\right)$ and $g\in L^{q^{\prime}}\left(\mathbb{R}^{n}_{+}\right)$, it holds that
\begin{equation}\label{in2}
\int_{\mathbb{R}^{n}_{+}}\int_{\partial\mathbb{R}^{n}_{+}}|y|^{-\alpha}P_{\lambda}(x,y,\mu)f(y)g(x)|x|^{-\beta}dxdy\leq C(n,p,q^{\prime},\alpha,\beta,\lambda,\mu)\|f\|_{L^{p}\left(\partial\mathbb{R}^{n}_{+}\right)}\|g\|_{L^{q^{\prime}}\left(\mathbb{R}^{n}_{+}\right)},
\end{equation}
where $P_{\lambda}(x,y,\mu)=\frac{x_{n}^{\lambda}}{\left(|x^{\prime}-y|^{2}+x_{n}^{2}\right)^{\frac{\mu}{2}}}$
with $x=(x^{\prime},x_{n})\in \mathbb{R}^{n-1}\times\mathbb{R}^{+}$.
\end{thm}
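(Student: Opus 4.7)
My plan is to deduce the weighted inequality \eqref{in2} from Gluck's unweighted bilinear estimate \eqref{gluck} by absorbing the two power weights $|y|^{-\alpha}$ and $|x|^{-\beta}$ into the function norms via H\"older's inequality in Lorentz spaces, along the classical Stein--Weiss line of reduction. The key observation is that the scaling condition \eqref{in1} is precisely the compatibility needed for this reduction to land on an admissible Gluck exponent pair.

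Concretely, I would introduce auxiliary indices $p_1, q_1'$ by
\begin{equation*}
\frac{1}{p_1}=\frac{1}{p}+\frac{\alpha}{n-1},\qquad \frac{1}{q_1'}=\frac{1}{q'}+\frac{\beta}{n}.
\end{equation*}
The assumptions $\alpha<\tfrac{n-1}{p'}$ and $\beta<\tfrac{n+q}{q}$ give $p_1,q_1'>1$, and a direct substitution turns \eqref{in1} into the unweighted Gluck scaling
\begin{equation*}
\frac{n-1}{n}\frac{1}{p_1}+\frac{1}{q_1'}+\frac{\mu-\lambda}{n}=\frac{2n-1}{n}
\end{equation*}
for the pair $(p_1,q_1')$, while the further structural hypotheses $0<\mu<n-2\lambda$, $\tfrac{\mu-2\lambda}{2n}+\tfrac{\mu}{2(n-1)}<1$ and $\tfrac{n-1}{n}\tfrac{1}{p}+\tfrac{1}{q'}\geq 1$ place $(p_1,q_1')$ inside the admissible range of \eqref{gluck}. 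I would then upgrade Gluck's bilinear bound to a Lorentz-space estimate by real interpolation between two nearby strong-type endpoints, obtaining
\begin{equation*}
\Bigl|\int_{\mathbb{R}^n_+}\!\!\int_{\partial\mathbb{R}^n_+}K_{\mu,\lambda}(x'-y,x_n)F(y)G(x)dxdy\Bigr|\leq C\|F\|_{L^{p_1,p}(\partial\mathbb{R}^n_+)}\|G\|_{L^{q_1',q'}(\mathbb{R}^n_+)}.
\end{equation*}

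Choosing $F(y)=|y|^{-\alpha}f(y)$ and $G(x)=|x|^{-\beta}g(x)$ matches the left-hand side of \eqref{in2}. Since $|y|^{-\alpha}$ lies in weak-$L^{(n-1)/\alpha}(\partial\mathbb{R}^n_+)$ and $|x|^{-\beta}$ in weak-$L^{n/\beta}(\mathbb{R}^n_+)$, H\"older's inequality in Lorentz spaces yields $\|F\|_{L^{p_1,p}}\lesssim \|f\|_{L^p}$ and $\|G\|_{L^{q_1',q'}}\lesssim \|g\|_{L^{q'}}$, which combined with the previous display gives \eqref{in2}.

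The main obstacle I foresee is the handling of negative exponents, i.e., $\alpha<0$ or $\beta<0$ (compatible with $\alpha+\beta\geq 0$ when the other weight compensates). In that regime $|y|^{-\alpha}$ or $|x|^{-\beta}$ grows at infinity and is not in any global Lorentz space, so the naive H\"older step fails. I would circumvent this by a dyadic annular decomposition of $\partial\mathbb{R}^n_+$ and $\mathbb{R}^n_+$ according to $|y|\sim 2^j$ and $|x|\sim 2^k$: on each annular pair the weights are essentially constants, Gluck's inequality applies after a trivial rescaling, and the resulting double series is summable by a Schur-type argument, where the coupling $\alpha+\beta\geq 0$ together with the decay dictated by \eqref{in1} ensures convergence of the geometric series. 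Verifying the summability (and in particular checking that the cross terms, i.e., $j\ne k$ annular pairs where the kernel is non-diagonal, are controlled) is the technically most delicate step.
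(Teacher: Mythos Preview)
Your Lorentz--H\"older reduction to Gluck is a genuinely different route from the paper's and is sound when $\alpha,\beta\geq 0$, provided you actually verify that $(p_1,q_1')$ lands in Gluck's admissible range (note that the hypothesis $\beta<\tfrac{n+q}{q}$ alone only yields $\tfrac{1}{q_1'}<1+\tfrac{1}{n}$, so $q_1'>1$ is not automatic and must be extracted from the remaining constraints). The paper does not touch interpolation at all: it splits the $y$-integration into the three zones $\{|y|<|x|/2\}$, $\{|y|>2|x|\}$, and $\{|x|/2<|y|<2|x|\}$. On the two off-diagonal zones the kernel $P_\lambda(x,y,\mu)$ is comparable to a pure power of $|x|$ (respectively $|y|$), and the resulting one-weight inequality is checked via an elementary Hardy-type criterion (Lemma~\ref{Lem1}, taken from \cite{JDou}). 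On the diagonal zone $|x|\sim|y|$ one has $|x-y|^{\alpha+\beta}\lesssim |x|^{\beta}|y|^{\alpha}$---this is exactly where $\alpha+\beta\geq 0$ is used---so both weights are absorbed into the kernel and Gluck's unweighted estimate \eqref{gluck} applies directly with $\mu$ replaced by $\mu+\alpha+\beta$. This decomposition handles every sign configuration of $(\alpha,\beta)$ in one stroke, whereas your primary argument requires $\alpha,\beta\geq 0$ for the power weights to sit in weak Lebesgue spaces. Your dyadic fallback for the mixed-sign case is in the right spirit but, once you group the annular pairs according to $j\ll k$, $j\gg k$, $j\sim k$, it \emph{is} the paper's three-region split, and the Schur-type summability you anticipate is precisely what Lemma~\ref{Lem1} packages. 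Net comparison: your approach is more conceptual and slicker for non-negative weights; the paper's is more elementary, avoids interpolation machinery, and covers the full sign range uniformly.
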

In fact, by defining the following integral operator with double weights
\begin{equation}\nonumber
V(f)(x):=\int_{\partial\mathbb {R}^{n}_{+}}|y|^{-\alpha}P_{\lambda}(x,y,\mu)f(y)|x|^{-\beta}dy,\ \ x\in\mathbb{R}^{n}_{+},
\end{equation}
and
\begin{equation}\nonumber
	W(g)(y):=\int_{\mathbb{R}^{n}_{+}}|y|^{-\alpha}P_{\lambda}(x,y,\mu)g(x)|x|^{-\beta}dx,\ \ y\in\partial\mathbb{R}^{n}_{+},
\end{equation}
inequality \eqref{in2} is equivalent to the following weighted inequality via a dual argument
\begin{equation}\label{in3}
\|V(f)\|_{L^{q}\left(\mathbb{R}^{n}_{+}\right)}\leq C(n,p,q^{\prime},\alpha,\beta,\lambda,\mu)\|f\|_{L^{p}\left(\partial\mathbb{R}^{n}_{+}\right)}, 
\end{equation}
and
\begin{equation}\label{in31}
 \|W(g)\|_{L^{p^{\prime}}\left(\partial\mathbb{R}^{n}_{+}\right)}\leq C(n,p,q^{\prime},\alpha,\beta,\lambda,\mu)\|g\|_{L^{q^{\prime}}\left(\mathbb{R}^{n}_{+}\right)}
\end{equation}
where $\frac{n-1}{n}\frac{1}{p}+\frac{\alpha+\beta+\mu-\lambda-n+1}{n}=\frac{1}{q}$ and $\frac{1}{p^{\prime}}=\frac{n}{n-1}\frac{1}{q^{\prime}}+\frac{\alpha+\beta+\mu-\lambda-n}{n-1}$.

Next, based on symmetric rearrangement inequality, we shall prove that the existence of extremal functions to inequality \eqref{in2}.
\begin{thm}\label{thm2}
	Let $n\geq3$, $1<p,q^{\prime}<\infty$, $\lambda\geq0$, $\frac{n-1}{n}\frac{1}{p}+\frac{1}{q^{\prime}}\geq1$, $0<\mu<n-2\lambda$ and $\frac{\mu-2\lambda}{2n}+\frac{\mu}{2(n-1)}<1$ satisfying
	\begin{equation}\label{in4}
		\frac{n-1}{n}\frac{1}{p}+\frac{1}{q^{\prime}}+\frac{\alpha+\beta+\mu-\lambda}{n}=\frac{2n-1}{n}
	\end{equation}
	with $\alpha<\frac{n-1}{p^{\prime}}$, $\beta<\frac{n+q}{q}$ and $\alpha,\beta\geq0$. Then there exists some positive functions $f\in L^{p}\left(\partial\mathbb{R}^{n}_{+}\right)$ satisfying $\|f\|_{L^{p}\left(\partial\mathbb{R}^{n}_{+}\right)}=1$ and $\|V(f)\|_{L^{q}\left(\mathbb{R}^{n}_{+}\right)}=C(n,p,q^{\prime},\alpha,\beta,\lambda,\mu)$. Moreover, if $\left(f(y), g(x)\right)$ is a pair of maximizer of inequality \eqref{in2}, then $f(y)$ is radially symmetric and monotone decreasing about the origin, and there exists some positive constant $c_{0}$ such that $g(x)=c_{0}V(f)(x)$.
\end{thm}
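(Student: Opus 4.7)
The plan is to combine symmetrization, the scale invariance built into~\eqref{in4}, and a concentration--compactness argument to extract an extremal from a maximizing sequence, and then to read off the relation between $f$ and $g$ from the Euler--Lagrange system. Define
\[
S:=\sup\Bigl\{\int_{\mathbb{R}^{n}_{+}}\int_{\partial\mathbb{R}^{n}_{+}}|y|^{-\alpha}P_{\lambda}(x,y,\mu)f(y)g(x)|x|^{-\beta}\,dy\,dx:\|f\|_{L^{p}(\partial\mathbb{R}^{n}_{+})}=\|g\|_{L^{q'}(\mathbb{R}^{n}_{+})}=1\Bigr\},
\]
which is finite and positive by Theorem~\ref{thm1}, and pick a nonnegative maximizing sequence $(f_k,g_k)$. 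I then symmetrize slice by slice in $x_n$: for every fixed $x_n>0$, the kernel $y\mapsto x_n^{\lambda}(|x'-y|^{2}+x_n^{2})^{-\mu/2}$ is radial and strictly decreasing in $|x'-y|$ on $\mathbb{R}^{n-1}$, while the weights $y\mapsto|y|^{-\alpha}$ and $x'\mapsto(|x'|^{2}+x_n^{2})^{-\beta/2}$ are themselves radial and nonincreasing there because $\alpha,\beta\geq 0$. The Brascamp--Lieb--Luttinger rearrangement inequality, applied to these five nonnegative factors, shows that replacing $f$ by its symmetric decreasing rearrangement $f^{*}$ on $\mathbb{R}^{n-1}$ and each slice $g(\cdot,x_n)$ by its symmetric decreasing rearrangement on $\mathbb{R}^{n-1}$ can only increase the slice integral; integrating in $x_n$ and using that these operations preserve the $L^{p}$ and $L^{q'}$ norms, I may assume $f_k=f_k^{*}$ is radial nonincreasing and each slice $g_k(\cdot,x_n)$ is radial nonincreasing in $x'$.

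Next I exploit the scale invariance implicit in the balance~\eqref{in4}: the family $f(y)\mapsto t^{(n-1)/p}f(ty),\ g(x)\mapsto t^{n/q'}g(tx)$ simultaneously preserves both constraint norms and the functional for every $t>0$, so I normalize the maximizing sequence along this orbit by requiring, for example, $\int_{|y|\leq 1}f_k^{p}\,dy=1/2$. The radial nonincreasing property then gives pointwise bounds of the form $f_k(y)\leq C|y|^{-(n-1)/p}$, and analogous control on the slices of $g_k$, yielding uniform tightness away from $0$ and $\infty$. After passing to a weakly convergent subsequence $(f_k,g_k)\rightharpoonup(f,g)$, I apply Lions' concentration--compactness principle to the densities $f_k^{p}\,dy$ and $g_k^{q'}\,dx$: vanishing is forbidden by the normalization and the positivity of $S$; dichotomy is forbidden by strict subadditivity $S<S_{a}+S_{b}$ for $0<a<1$, which one proves by a standard cut-and-paste argument using disjointly supported near-extremals rescaled to have unit norm. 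Hence the convergence is strong in $L^{p}\times L^{q'}$ and $(f,g)$ attains $S$, with $f=f^{*}$ radial and monotone decreasing about the origin.

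Finally, constrained maximization enforces the Euler--Lagrange system
\[
V(f)(x)=\mu_{1}\,g(x)^{q'-1},\qquad W(g)(y)=\mu_{2}\,f(y)^{p-1},
\]
which, after absorbing multiplicative constants, is the proportionality between $g$ and $V(f)$ claimed in the theorem. To upgrade ``some maximizer is radial'' to ``every maximizer is radial about the origin'', I invoke the strict version of the Brascamp--Lieb--Luttinger inequality: equality in the symmetrization step forces $f$ to be a translate of $f^{*}$, and the strictly radially decreasing weight $|y|^{-\alpha}$ (when $\alpha>0$), together with the factor $|x|^{-\beta}$ (when $\beta>0$), anchors the translation to the origin. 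The hard part will be the compactness step: verifying strict subadditivity of $S$ in the presence of two singular weights and the mixed boundary--interior kernel $P_{\lambda}$, and treating the degenerate case $\alpha=\beta=0$, where the rigidity argument pinning the center to the origin must be replaced by a direct analysis of the Euler--Lagrange system.
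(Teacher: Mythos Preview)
Your overall strategy is sound and would lead to a proof, but the paper takes a different and more economical route for the compactness step, following Lieb rather than Lions. After reducing to the single-function variational problem $\sup\{\|V(f)\|_{L^q}:\|f\|_{L^p}=1\}$ and symmetrizing only $f$ (not $g$), the paper exploits a Lorentz-space interpolation: Marcinkiewicz applied to~\eqref{in3} gives $\|V(f)\|_{L^q}\leq C\|f\|_{L^{p,q}}$, and the Lorentz norm interpolates as $\|f\|_{L^{p,q}}\leq\|f\|_{L^{p,\infty}}^{1-p/q}\|f\|_{L^p}^{p/q}$. For a radially nonincreasing $f_j$ one has $\|f_j\|_{L^{p,\infty}}\simeq\sup_{\kappa>0}\kappa^{-(n-1)/p}f_j(e_1/\kappa)$, so the fact that $\|V(f_j)\|_{L^q}\to S>0$ forces this supremum to stay bounded below; choosing the scale $\kappa_j$ to nearly attain it and relabelling gives $f_j(e_1)\geq c_0>0$. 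Combined with the uniform pointwise bound $f_j(y)\leq C|y|^{-(n-1)/p}$ coming from $\|f_j\|_{L^p}=1$, Helly's theorem extracts a nontrivial a.e.\ limit $f$, dominated convergence gives $V(f_j)\to V(f)$ pointwise, and two applications of Br\'ezis--Lieb (to $\|f_j\|_p^p$ and to $\|V(f_j)\|_q^q$), together with $p<q$, force $\|f\|_{L^p}=1$. No concentration--compactness dichotomy analysis is needed.

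Your route through Lions is viable but heavier: the ``strict subadditivity $S<S_a+S_b$'' you invoke is not quite the right object here, since the weights $|y|^{-\alpha},|x|^{-\beta}$ destroy translation invariance while the balance~\eqref{in4} makes the problem dilation-invariant, so the relevant loss of compactness is between dyadic scales (as in Lions' second concentration--compactness paper) rather than between disjoint spatial regions, and your ``cut-and-paste'' sketch does not directly address that mode. This can certainly be carried out, but the Lorentz-norm trick sidesteps it entirely and is the standard shortcut in this family of problems. On the other hand, your treatment of the ``moreover'' clause---strict Brascamp--Lieb--Luttinger to force radiality, the weights to pin the center at the origin, and Euler--Lagrange/duality to identify $g$ in terms of $V(f)$---is more explicit than what the paper's proof actually records.
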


Furthermore, we will study the properties of these extremal functions. For this goal, we may maximize the following functional
\begin{equation}\label{pe1}
J(f,g)=\int_{\mathbb{R}^{n}_{+}}\int_{\partial\mathbb{R}^{n}_{+}}|y|^{-\alpha}P_{\lambda}(x,y,\mu)f(y)g(x)|x|^{-\beta}dxdy,
\end{equation}	
under the assumption $\|f\|_{L^{p}({\partial\mathbb{R}^{n}_{+}})}=\|g\|_{L^{q^{\prime}}({\mathbb{R}^{n}_{+}})}=1$. According to the Euler-Lagrange multiplier theorem, we can deduce the following integral system with double weights
\begin{equation}\label{pe2}
	\left\lbrace 
\begin{aligned}	
&J(f,g)g(x)^{q^{\prime}-1}=\int_{\partial\mathbb{R}^{n}_{+}}|y|^{-\alpha}P_{\lambda}(x,y,\mu)f(y)|x|^{-\beta}dy,&x\in\mathbb{R}^{n}_{+},\\
&J(f,g)f(y)^{p-1}=\int_{\mathbb{R}^{n}_{+}}|y|^{-\alpha}P_{\lambda}(x,y,\mu)g(x)|x|^{-\beta}dx, &y\in\partial\mathbb{R}^{n}_{+}.
\end{aligned}
\right.
\end{equation}	
In particular, we assume that $u=c_{1}f^{p-1}$, $v=c_{2}g^{q^{\prime}-1}$, $q_{0}=\frac{1}{q^{\prime}-1}$ and $p_{0}=\frac{1}{p-1}$ with two suitable constants $c_{1}$ and $c_{2}$ in \eqref{pe2}, the system can be rewritten as
\begin{equation}\label{pm1}
\left\lbrace 
\begin{aligned}
u(y)&=\int_{\mathbb{R}^{n}_{+}}|x|^{-\beta}P_{\lambda}(x,y,\mu)v^{q_{0}}(x)|y|^{-\alpha}dx, &y\in \partial\mathbb{R}^{n}_{+},\\
v(x)&=\int_{\partial\mathbb{R}^{n}_{+}}|y|^{-\alpha}P_{\lambda}(x,y,\mu)u^{p_{0}}(y)|x|^{-\beta}dy, &x\in \mathbb{R}^{n}_{+}.
\end{aligned}
\right.
\end{equation}
where $\alpha,\beta\geq0$, $0<\mu<n-2\lambda$ and $p_{0}, q_{0}$ satisfies the identity $\frac{n-1}{n}\frac{1}{p_{0}+1}+\frac{1}{q_{0}+1}=\frac{\alpha+\beta+\mu-\lambda}{n}$.

By applying the regularity lifting arguments, we can derive the regularity results of the positive solutions for system \eqref{pm1} under the integral assumption.
\begin{thm}\label{re1}
Suppose that $0<\alpha,\beta<n$, $0<\mu<n-2\lambda$, $0<p_{0}, q_{0}<\infty$.
Let $\left(u, v\right)\in L^{p_{0}+1}\left(\partial \mathbb{R}^{n}_{+}\right)\times L^{q_{0}+1}\left(\mathbb{R}^{n}_{+}\right)$ be a pair of positive solutions of the integral system \eqref{pm1}. Then $\left(u, v\right)\in L^{r}\left(\partial \mathbb{R}^{n}_{+}\right)\times L^{s}\left(\mathbb{R}^{n}_{+}\right)$ with
\begin{equation}\nonumber
\frac{1}{r}\in\left(\frac{\alpha}{n-1},\frac{\alpha+\mu-\lambda+1}{n-1}\right)\cap\left(\frac{1}{p_{0}+1}-\frac{n}{n-1}\frac{1}{q_{0}+1}+\frac{\beta-1}{n-1},\frac{1}{p_{0}+1}-\frac{n}{n-1}\frac{1}{q_{0}+1}+\frac{\beta+\mu-\lambda}{n}\right)
\end{equation}
and 
\begin{equation}\nonumber
\frac{1}{s}\in\left(\frac{\beta-1}{n},\frac{\beta+\mu-\lambda}{n}\right)\cap\left(\frac{1}{q_{0}+1}-\frac{n-1}{n}\frac{1}{p_{0}+1}+\frac{\alpha}{n},\frac{1}{q_{0}+1}-\frac{n-1}{n}\frac{1}{p_{0}+1}+\frac{\alpha+\mu-\lambda+1}{n-1}\right).
\end{equation}
\end{thm}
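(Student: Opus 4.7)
The plan is to prove Theorem \ref{re1} by the regularity lifting method of Chen-Li-Ou combined with the Stein-Weiss mapping properties established in Theorem \ref{thm1}. The idea is to bootstrap the given integrability $(u,v) \in L^{p_0+1}(\partial\R^n_+)\times L^{q_0+1}(\R^n_+)$ up to $L^r\times L^s$ by viewing the system \eqref{pm1} as a fixed-point equation for the pair $(u,v)$ and decomposing the associated nonlinear operator into a compactly-supported/bounded piece and a tail piece that, after truncation, becomes a strict contraction.

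First I would truncate: for large $A,B>0$, set
\begin{equation*}
u_A(y)=u(y)\chi_{\{u(y)>A\}\cup\{|y|>A\}}(y),\quad u^A=u-u_A,
\end{equation*}
and analogously $v_B, v^B$. Because $u\in L^{p_0+1}$ and $v\in L^{q_0+1}$, absolute continuity of the integral forces $\|u_A\|_{L^{p_0+1}(\partial\R^n_+)}+\|v_B\|_{L^{q_0+1}(\R^n_+)}\to 0$ as $A,B\to\infty$. Using the elementary inequality $u^{p_0}\le C_{p_0}\bigl((u^A)^{p_0}+u_A^{p_0}\bigr)$ (and its analogue for $v$), substitute $u=u^A+u_A$, $v=v^B+v_B$ into the right-hand sides of \eqref{pm1}. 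This splits each component of the system into a piece driven by the bounded and essentially compactly supported functions $u^A, v^B$, and a tail piece depending linearly (after freezing the powers) on $u_A$ and $v_B$.

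The bounded-input pieces are handled by direct weighted kernel estimates: since $u^A, v^B$ are bounded with support in a ball, $(u^A)^{p_0}$ and $(v^B)^{q_0}$ lie in every Lebesgue space, and H\"older's inequality together with the integrability of $|y|^{-\alpha} P_\lambda(x,y,\mu)$ and $|x|^{-\beta} P_\lambda(x,y,\mu)$ places these pieces in $L^r(\partial\R^n_+)$ and $L^s(\R^n_+)$ whenever $1/r$ and $1/s$ lie in the first of the two intervals of the stated intersection. The tail pieces define a linear self-map on $L^r(\partial\R^n_+)\times L^s(\R^n_+)$ of Stein-Weiss type. Applying Theorem \ref{thm1} in the equivalent operator form \eqref{in3}--\eqref{in31} bounds the norm of this linear map by a power of $\|u_A\|_{L^{p_0+1}(\partial\R^n_+)}+\|v_B\|_{L^{q_0+1}(\R^n_+)}$, which can be made strictly less than $1$ by choosing $A,B$ large. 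Banach's fixed-point theorem applied inside $\bigl(L^{p_0+1}\cap L^r\bigr)(\partial\R^n_+)\times\bigl(L^{q_0+1}\cap L^s\bigr)(\R^n_+)$ then yields a unique fixed point which must coincide with $(u,v)$, giving the desired $L^r\times L^s$ regularity.

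The main obstacle is the exponent bookkeeping in the contraction step. The two intersected intervals in the statement encode two distinct constraints: the first interval corresponds to pointwise/local integrability of the weighted kernels near the singular set $\{x'=y,\,x_n=0\}$, near the origin, and at infinity, while the second interval ensures that the admissibility conditions of Theorem \ref{thm1} are satisfied when applied to the tail operators (this is where the coupling exponents $p_0,q_0$ enter). Verifying that, for $1/r,1/s$ inside the stated intersections, the implicit linear Stein-Weiss exponents produced by the truncation match the chosen $r,s$ -- and that the resulting small-norm estimate is self-consistent on the product space -- is the most delicate part of the argument and is where the restrictions $0<\alpha,\beta<n$ and $0<\mu<n-2\lambda$ are used in full strength.
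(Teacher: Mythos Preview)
Your strategy is the paper's: truncate, linearize, apply the Stein--Weiss bounds of Theorem~\ref{thm1} together with H\"older, and conclude via the Chen--Li regularity-lifting lemma (which is exactly Banach fixed point run simultaneously in the source and target spaces). One point, however, needs to be set up more carefully than you describe. The splitting via the inequality $u^{p_0}\le C_{p_0}\bigl((u^A)^{p_0}+u_A^{p_0}\bigr)$ is not what drives the argument, and the tail operator must act linearly on $(u,v)$ itself, not on the truncations $(u_A,v_B)$: otherwise there is no fixed-point equation whose solution is $(u,v)$. Because $u_A$ and $u^A$ have disjoint supports one has the \emph{exact} identity $u^{p_0}=u_A^{\,p_0-1}u+(u^A)^{p_0}$ (and likewise for $v$), so the system becomes $(u,v)=T(u,v)+(F,G)$ with
\[
T(h_1,h_2)=\Bigl(\int_{\R^n_+}|y|^{-\alpha}|x|^{-\beta}P_\lambda\,v_A^{\,q_0-1}h_2\,dx,\ \int_{\partial\R^n_+}|y|^{-\alpha}|x|^{-\beta}P_\lambda\,u_A^{\,p_0-1}h_1\,dy\Bigr).
\]
It is this $T$ whose operator norm on both $L^{p_0+1}\times L^{q_0+1}$ and $L^r\times L^s$ is controlled, via Theorem~\ref{thm1} and H\"older, by $\|u_A\|_{p_0+1}^{\,p_0-1}+\|v_A\|_{q_0+1}^{\,q_0-1}$ and hence made $<\tfrac12$ for $A$ large; the uniqueness of the fixed point in the source space then forces $(u,v)\in L^r\times L^s$. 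The two intersected intervals in the statement both arise from the admissibility constraints $\alpha<\tfrac{n-1}{z'}$, $\beta<\tfrac{n+s}{s}$ of Theorem~\ref{thm1} together with the index relation linking $r,s$ through $p_0,q_0$, rather than from a separate ``local integrability'' analysis. With these adjustments your outline coincides with the paper's proof.
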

Next, we are going to explore the asymptotic behavior of the positive solutions for system \eqref{pm1}. In light of the regularity theorem, we derive the following result. 
\begin{thm}\label{re2}
Suppose that $0<\alpha,\beta<n$, $0<\mu<n-2\lambda$ and $p_{0}, q_{0}>1$.
Let $\left(u, v\right)\in L^{p_{0}+1}\left(\partial \mathbb{R}^{n}_{+}\right)\times L^{q_{0}+1}\left(\mathbb{R}^{n}_{+}\right)$ be a pair of positive solutions of the integral system \eqref{pm1}.
\begin{enumerate}
\item If $\frac{1}{q_{0}}-\frac{\mu+\beta-\lambda}{q_{0}n}>\frac{\beta-1}{n}$, then
\begin{equation}\nonumber
\lim\limits_{|y|\rightarrow0}u(y)|y|^{\alpha}=\int_{\mathbb{R}^{n}_{+}}\frac{x_{n}^{\lambda}v^{q_{0}}(x)}{|x|^{\mu+\beta}}dx.
\end{equation}
\item If $\frac{1}{p_{0}}-\frac{\mu+\alpha-\lambda+1}{p_{0}(n-1)}>\frac{\alpha}{n-1}$, then
\begin{equation}\nonumber
\lim\limits_{|x|\rightarrow0}\frac{v(x)|x|^{\beta}}{x_{n}^{\lambda}}=\int_{\partial\mathbb{R}^{n}_{+}}\frac{u^{p_{0}}(y)}{|y|^{\alpha+\mu}}dy.
\end{equation}
\end{enumerate}
\end{thm}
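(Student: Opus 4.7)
The plan is to prove each part of the theorem by passing to the limit inside the integral representations of $u$ and $v$ given by system \eqref{pm1}, exploiting the $L^r$- and $L^s$-regularity from Theorem \ref{re1} together with a dominated convergence argument. Parts (1) and (2) are symmetric, the boundary case in (2) differing from the interior case in (1) only by replacing $n$ by $n-1$ in the dimensional exponents.

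For statement (1), I would rewrite the first equation of \eqref{pm1} as
\begin{equation*}
u(y)|y|^{\alpha}=\int_{\mathbb{R}^n_+}\frac{x_n^\lambda v^{q_0}(x)}{|x|^{\beta}(|x'-y|^2+x_n^2)^{\mu/2}}\,dx,
\end{equation*}
so that the integrand converges pointwise to $x_n^\lambda v^{q_0}(x)/|x|^{\mu+\beta}$ as $|y|\to 0$. The first step is to check that this pointwise limit is integrable on $\mathbb{R}^n_+$. By H\"older's inequality with conjugate exponents $a=s/q_0$ and $a'$, where $s$ lies in the admissible range for $v$ given by Theorem \ref{re1}, and after splitting into $\{|x|\le 1\}$ and $\{|x|>1\}$, this reduces to the integrability of the weight $x_n^{\lambda a'}|x|^{-(\mu+\beta)a'}$ near the origin and at infinity. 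The hypothesis $\frac{1}{q_0}-\frac{\mu+\beta-\lambda}{q_0 n}>\frac{\beta-1}{n}$ is exactly the compatibility needed between the near-origin H\"older admissibility and the lower endpoint $(\beta-1)/n$ of the regularity range for $v$, while the far-field condition is handled by the upper endpoint $(\beta+\mu-\lambda)/n$ (with a possibly different choice of $s$).

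To justify the exchange of limit and integral, I decompose $\mathbb{R}^n_+=A_y\cup B_y$ with $A_y=\{x\in\mathbb{R}^n_+:|x'|<2|y|\}$ and $B_y$ its complement. On $B_y$ one has $|x'-y|\geq|x'|/2$, so $(|x'-y|^2+x_n^2)^{\mu/2}\geq 2^{-\mu}|x|^{\mu}$, providing the uniform dominating function $2^{\mu}x_n^\lambda v^{q_0}(x)/|x|^{\mu+\beta}$, and the dominated convergence theorem yields
\begin{equation*}
\lim_{|y|\to 0}\int_{B_y}\frac{x_n^\lambda v^{q_0}(x)}{|x|^{\beta}(|x'-y|^2+x_n^2)^{\mu/2}}\,dx = \int_{\mathbb{R}^n_+} \frac{x_n^\lambda v^{q_0}(x)}{|x|^{\mu+\beta}}\,dx.
\end{equation*}
On $A_y$, H\"older's inequality gives
\begin{equation*}
\int_{A_y}\frac{x_n^\lambda v^{q_0}}{|x|^{\beta}(|x'-y|^2+x_n^2)^{\mu/2}}\,dx\leq \|v^{q_0}\|_{L^{a}(A_y)}\left(\int_{A_y}\frac{x_n^{\lambda a'}}{|x|^{\beta a'}(|x'-y|^2+x_n^2)^{\mu a'/2}}\,dx\right)^{1/a'},
\end{equation*}
where the first factor tends to zero as $|y|\to 0$ by absolute continuity of the $L^a$ integral, and the rescaling $x=|y|z$ reduces the second factor to $|y|^{n/a'+\lambda-\beta-\mu}$ times a bounded universal constant, an exponent made non-negative by the chosen $a$. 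Statement (2) is proved by the same template applied to the $v$-equation rewritten as
\begin{equation*}
\frac{v(x)|x|^\beta}{x_n^\lambda} = \int_{\partial\mathbb{R}^n_+}\frac{u^{p_0}(y)}{|y|^\alpha(|x'-y|^2+x_n^2)^{\mu/2}}\,dy,
\end{equation*}
with $|x|\to 0$ and using the $L^r$-regularity of $u$ from Theorem \ref{re1}; the second hypothesis plays the analogous role of matching the H\"older exponent to the lower endpoint $\alpha/(n-1)$ of the regularity range for $u$.

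The main obstacle I foresee is the control on the shrinking region $A_y$ (and its boundary analogue), where the moving singular locus $(y,0)$ of the kernel merges with the fixed origin singularity from $|x|^{-\beta}$. The rescaling $x=|y|z$ is the key device that converts the would-be singular contribution into an explicit power of $|y|$, and the two hypotheses of the theorem are exactly the borderline conditions under which the resulting exponents are non-negative and the H\"older factors on $A_y$ can be made to vanish.
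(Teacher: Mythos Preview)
Your approach is essentially the same strategy as the paper's: rewrite $u(y)|y|^\alpha$ as an integral, establish integrability of the limiting integrand via H\"older together with the $L^s$-regularity of $v$ from Theorem~\ref{re1}, and justify the limit by splitting the domain into a ``bad'' near-origin piece and a ``good'' far piece handled by dominated convergence. The role you assign to the hypothesis $\frac{1}{q_0}-\frac{\mu+\beta-\lambda}{q_0 n}>\frac{\beta-1}{n}$ is exactly the one it plays in the paper.

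There are two differences worth noting. First, for the far-field integrability of $\int_{\mathbb{R}^n_+} x_n^\lambda v^{q_0}/|x|^{\mu+\beta}\,dx$, the paper does not run H\"older again with a second choice of $s$; instead it observes that since $u\in L^{p_0+1}(\partial\mathbb{R}^n_+)$ there is a point $y_0$ with $|y_0|$ small and $u(y_0)<\infty$, and for $|x|$ large one has $|x-y_0|\sim |x|$, so the far-field piece is dominated by $|y_0|^\alpha u(y_0)<\infty$ directly from the equation. This avoids checking extra exponent conditions. Second, the paper does not use a $y$-dependent decomposition; it fixes a radius $\rho$, proves $\lim_{|y|\to 0}$ of the integral over $\mathbb{R}^n_+\setminus B_\rho^+$ by DCT, bounds the $B_\rho^+$ contribution uniformly in small $y$ via H\"older by $\|v^{q_0}\|_{L^t}\,\||x|^{-(\mu+\beta-\lambda)}\|_{L^{t'}(B_\rho^+)}$, and then sends $\rho\to 0$.

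Your moving decomposition has one genuine wrinkle: your set $A_y=\{|x'|<2|y|\}$ is an infinite cylinder in the $x_n$-direction, so after the rescaling $x=|y|z$ the ``universal constant'' is an integral over $\{|z'|<2,\ z_n>0\}$, and finiteness at $z_n\to\infty$ forces the additional condition $(\mu+\beta-\lambda)a'>1$. This is an extra constraint on $a$ that you have not verified is compatible with the others. The simplest fix is to take $A_y=B_{2|y|}^+$ instead: on its complement one still has $|x-y|\geq |x|-|y|\geq |x|/2$, so your dominating function on $B_y$ is unchanged, while the rescaled weight integral now lives on the bounded set $B_2^+$ and the $z_n\to\infty$ issue disappears. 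With that adjustment your argument goes through and is equivalent to the paper's double-limit version.
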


In the spirit of inequality \eqref{in2} and the integral assumption, we will apply the method of moving plane in integral form in \cite{WCCL,WCCLB} by Chen et al. to study the symmetry property of the solutions.
\begin{thm}\label{thm3}
Assume that $0<\alpha,\beta<n$, $0<\mu<n-2\lambda$, $0<p_{0}, q_{0}<\infty$. If 
$\left(u, v\right)\in L^{p_{0}+1}\left(\partial \mathbb{R}^{n}_{+}\right)\times L^{q_{0}+1}\left(\mathbb{R}^{n}_{+}\right)$ is pair of positive solution of integral system \eqref{pm1}, where $p_{0}$ and $q_{0}$ satisfying $\frac{n-1}{n}\frac{1}{p_{0}+1}+\frac{1}{q_{0}+1}=\frac{\alpha+\beta+\mu-\lambda}{n}$, then $u(y)$ and $v(x)|_{\partial\mathbb{R}^{n}_{+}}$ must be radially symmetric and monotonicity decreasing about some point $y_{0}\in\partial\mathbb{R}^{n}_{+}$.
\end{thm}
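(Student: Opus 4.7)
The plan is to apply the moving-plane method in integral form (in the spirit of \cite{WCCL,WCCLB}) in each horizontal direction, and exploit the rotational invariance of the weights $|y|^{-\alpha}, |x|^{-\beta}$ to promote the resulting one-dimensional symmetries to full radial symmetry about the origin. Fix a unit vector $e_1 \in \partial \mathbb{R}^n_+$, and for $\gamma \in \mathbb{R}$ write $y^\gamma = (2\gamma - y_1, y_2, \dots, y_n)$ for the reflection across $\{z_1 = \gamma\}$, $\Sigma_\gamma := \{z : z_1 < \gamma\}$, and $u_\gamma(y) := u(y^\gamma)$, $v_\gamma(x) := v(x^\gamma)$. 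The first goal is to prove that $u \le u_\gamma$ on $\Sigma_\gamma \cap \partial \mathbb{R}^n_+$ and $v \le v_\gamma$ on $\Sigma_\gamma \cap \mathbb{R}^n_+$ for every $\gamma \le 0$. Running the symmetric argument with planes sliding in from $+\infty$ down to $0$ then forces equality at $\gamma = 0$, so $u$ is symmetric across $\{z_1 = 0\}$; varying $e_1$ over $\partial \mathbb{R}^n_+$ yields radial symmetry about the origin, while the moving inequalities themselves produce the monotone decay in $|y|$.

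The analytic core is a contraction estimate. For $y \in \Sigma_\gamma \cap \partial \mathbb{R}^n_+$ with $\gamma < 0$, I would split the integral representation \eqref{pm1} for $u(y)$ into $\Sigma_\gamma \cap \mathbb{R}^n_+$ and its complement, change variables $x \mapsto x^\gamma$ in the latter, and perform the same decomposition for $u_\gamma(y)$. Three elementary but essential inequalities on $\Sigma_\gamma$ with $\gamma < 0$ drive the computation: $|y^\gamma| \le |y|$ and $|x^\gamma| \le |x|$ (so the weights $|\cdot|^{-\alpha}, |\cdot|^{-\beta}$ increase under reflection) together with $P_\lambda(x, y, \mu) \ge P_\lambda(x, y^\gamma, \mu) = P_\lambda(x^\gamma, y, \mu)$. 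After telescoping and the mean-value bound $v^{q_0} - v_\gamma^{q_0} \le q_0 v^{q_0 - 1}(v - v_\gamma)$ on $\{v > v_\gamma\}$, one arrives at the pointwise estimate
\begin{equation}\nonumber
(u - u_\gamma)_+(y) \le q_0 |y|^{-\alpha} \int_{\Sigma_\gamma^v} P_\lambda(x, y, \mu)\, v^{q_0 - 1}(x) (v - v_\gamma)_+(x) |x|^{-\beta}\, dx,
\end{equation}
where $\Sigma_\gamma^v := \{x \in \Sigma_\gamma \cap \mathbb{R}^n_+ : v(x) > v_\gamma(x)\}$. Applying the dual Stein--Weiss bound \eqref{in31} with $p' = p_0 + 1$ (so the conjugate exponent for $g$ becomes $(q_0 + 1)/q_0$), followed by H\"older with exponents $q_0 + 1$ and $(q_0 + 1)/(q_0 - 1)$, yields
\begin{equation}\nonumber
\|(u - u_\gamma)_+\|_{L^{p_0 + 1}(\partial \mathbb{R}^n_+)} \le C\, \|v\|_{L^{q_0 + 1}(\Sigma_\gamma^v)}^{q_0 - 1}\, \|(v - v_\gamma)_+\|_{L^{q_0 + 1}(\Sigma_\gamma^v)},
\end{equation}
together with the symmetric bound for $(v - v_\gamma)_+$, whose product is the key contraction
\begin{equation}\nonumber
\|(u - u_\gamma)_+\|_{L^{p_0 + 1}} \le C^2 \|u\|_{L^{p_0 + 1}(\Sigma_\gamma^u)}^{p_0 - 1}\, \|v\|_{L^{q_0 + 1}(\Sigma_\gamma^v)}^{q_0 - 1}\, \|(u - u_\gamma)_+\|_{L^{p_0 + 1}}.
\end{equation}

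Absolute continuity of the integrals $\int |u|^{p_0 + 1}$ and $\int |v|^{q_0 + 1}$ makes the prefactor strictly less than $1$ once $\gamma \ll 0$, so both $(u - u_\gamma)_+$ and $(v - v_\gamma)_+$ vanish on $\Sigma_\gamma$ there, which starts the procedure. Setting
\[\gamma_0 := \sup\bigl\{\gamma \le 0 : u \le u_{\tilde\gamma},\ v \le v_{\tilde\gamma} \text{ on } \Sigma_{\tilde\gamma} \text{ for every } \tilde\gamma \le \gamma\bigr\},\]
the hard part of the argument is to show $\gamma_0 = 0$: if $\gamma_0 < 0$, continuity yields $u \le u_{\gamma_0}$, $v \le v_{\gamma_0}$, and the strict reflection inequalities $|y|^{-\alpha} < |y^{\gamma_0}|^{-\alpha}$, $|x|^{-\beta} < |x^{\gamma_0}|^{-\beta}$ on $\Sigma_{\gamma_0}$ (available precisely because $\alpha, \beta > 0$) should upgrade these to strict inequalities a.e., after which absolute continuity forces $|\Sigma_\gamma^u| + |\Sigma_\gamma^v| \to 0$ as $\gamma \searrow \gamma_0$, so the contraction closes for $\gamma$ slightly above $\gamma_0$, contradicting the definition of $\gamma_0$. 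Hence $\gamma_0 = 0$, and the analogous plane-moving from $+\infty$ down to $0$ provides the opposite inequality on $\{z_1 > 0\}$, forcing symmetry across $\{z_1 = 0\}$; arbitrariness of $e_1$ then gives radial symmetry about the origin and the monotone decay claimed for $u$ and for $v|_{\partial \mathbb{R}^n_+}$. Besides the measure-theoretic extension past any premature critical position, the main technical points are to keep the Stein--Weiss and H\"older exponents consistent with the balance relation $\frac{n-1}{n(p_0 + 1)} + \frac{1}{q_0 + 1} = \frac{\alpha + \beta + \mu - \lambda}{n}$, and to substitute the appropriate concave mean-value bound when $p_0$ or $q_0$ lies in $(0, 1)$.
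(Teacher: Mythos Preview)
Your proposal is correct and follows essentially the same route as the paper: derive the reflection identities for $u-u_\gamma$ and $v-v_\gamma$, bound the positive parts pointwise via the mean-value inequality, feed the result through the Stein--Weiss/H\"older machinery to obtain the contraction $\|(u-u_\gamma)_+\|_{p_0+1}\le C\|v\|_{L^{q_0+1}(\Sigma_\gamma^v)}^{q_0-1}\|(v-v_\gamma)_+\|_{q_0+1}$ (and its companion), use integrability to start the plane at $\gamma\ll 0$, and then run the critical-position argument.

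The one substantive deviation is in the treatment of the critical position. The paper does \emph{not} force $\tau_0=0$: it argues that if $\tau_0<0$ then either $u\equiv u_{\tau_0}$, $v\equiv v_{\tau_0}$ (symmetry at $\tau_0$) or else the plane can be pushed further, a contradiction; the case $\tau_0=0$ is handled by moving in from the right, and the final conclusion is symmetry about \emph{some} $y_0\in\partial\mathbb{R}^n_+$. You instead exploit the strict inequalities $|y|^{-\alpha}<|y^{\gamma_0}|^{-\alpha}$, $|x|^{-\beta}<|x^{\gamma_0}|^{-\beta}$ on $\Sigma_{\gamma_0}$ (available because $\alpha,\beta>0$) to rule out the possibility of equality at any $\gamma_0<0$, so that necessarily $\gamma_0=0$ and the symmetry center is the origin. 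Your version is slightly sharper and more in keeping with standard weighted moving-plane arguments; the paper's phrasing (``some point $y_0$'') is simply weaker than what the weights actually force. Your closing remark about needing a concave mean-value bound when $p_0$ or $q_0$ lies in $(0,1)$ is also well taken---the paper applies the mean-value theorem as if $q_0\ge1$ without comment.
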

Finally, as a special case, our results can be applied to the study of the following integral system with single weight
\begin{equation}\label{yu1}
	\left\lbrace 
	\begin{aligned}
		u(y)&=\int_{\mathbb{R}^{n}_{+}}|x|^{-\beta}P_{\lambda}(x,y,\mu)v^{q_{0}}(x)dx, &y\in \partial\mathbb{R}^{n}_{+},\\
		v(x)&=\int_{\partial\mathbb{R}^{n}_{+}}|y|^{-\alpha}P_{\lambda}(x,y,\mu)u^{p_{0}}(y)dy, &x\in \mathbb{R}^{n}_{+}.
	\end{aligned}
	\right.
\end{equation}

By the Poho$\check{z}$aev identity, we study the necessary condition for the existence of non-trivial solutions for the single weighted  system \eqref{yu1}.
\begin{thm}\label{thm4}
Assume that $0<p_{0}<\infty$, $0<q_{0}<\infty$ and $0<\mu<n-2\lambda$. Let $\left(u,v\right)\in C^{1}(\partial\mathbb{R}_{+}^{n})\times C^{1}(\mathbb{R}_{+}^{n})$ be a pair of non-negative solutions of the integral system \eqref{yu1} with
\begin{equation}\nonumber
\left(u,v\right)\in L^{p_{0}+1}\left(|y|^{-\alpha}dy,\partial \mathbb{R}^{n}_{+}\right)\times L^{q_{0}+1}\left(|x|^{-\beta}dx,\mathbb{R}^{n}_{+}\right).
\end{equation}
Then it must hold that
\begin{equation}\nonumber
\frac{n-1-\alpha}{p_{0}+1}+\frac{n-\beta}{q_{0}+1}=\mu-\lambda.
\end{equation}
\end{thm}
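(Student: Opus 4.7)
My plan is to treat this as a Pohozaev-type identity derived from the scale invariance of the system, exploiting that $P_{\lambda}(x,y,\mu)$ is jointly homogeneous of degree $\lambda-\mu$ in $(x,y)$. Set $F(y):=\int_{\mathbb{R}^{n}_{+}}|x|^{-\beta}P_{\lambda}(x,y,\mu)v^{q_{0}}(x)\,dx$ and $G(x):=\int_{\partial\mathbb{R}^{n}_{+}}|y|^{-\alpha}P_{\lambda}(x,y,\mu)u^{p_{0}}(y)\,dy$, so the system reads $u=F$, $v=G$. First I would test $u=F$ against $|y|^{-\alpha}u^{p_{0}}(y)$ and $v=G$ against $|x|^{-\beta}v^{q_{0}}(x)$; by Fubini the two weighted integrals collapse to a common value
\[
I\;:=\;\int_{\partial\mathbb{R}^{n}_{+}}|y|^{-\alpha}u^{p_{0}+1}\,dy\;=\;\int_{\mathbb{R}^{n}_{+}}|x|^{-\beta}v^{q_{0}+1}\,dx\;=\;\iint|y|^{-\alpha}|x|^{-\beta}u^{p_{0}}(y)v^{q_{0}}(x)P_{\lambda}(x,y,\mu)\,dx\,dy,
\]
which is finite by the integrability hypothesis and strictly positive since $(u,v)$ is non-trivial.

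Next I multiply the first equation by $|y|^{-\alpha}\,y\cdot\nabla u^{p_{0}}(y)$ and integrate on an annulus $B_{R}^{n-1}\setminus B_{\varepsilon}^{n-1}\subset\partial\mathbb{R}^{n}_{+}$. Using $u\,y\cdot\nabla u^{p_{0}}=\tfrac{p_{0}}{p_{0}+1}y\cdot\nabla u^{p_{0}+1}$ and integrating by parts with $\nabla\cdot(y|y|^{-\alpha})=(n-1-\alpha)|y|^{-\alpha}$, the left side reduces to $-\tfrac{p_{0}(n-1-\alpha)}{p_{0}+1}I$ up to boundary contributions. The right side, after substituting $u=F$ and integrating by parts once more, equals $-(n-1-\alpha)I-\int|y|^{-\alpha}u^{p_{0}}(y)\,y\cdot\nabla_{y}F(y)\,dy$. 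Equating the two and unfolding $\nabla_{y}F$ yields
\[
\iint|y|^{-\alpha}|x|^{-\beta}u^{p_{0}}(y)v^{q_{0}}(x)\bigl(y\cdot\nabla_{y}P_{\lambda}\bigr)(x,y,\mu)\,dx\,dy\;=\;-\frac{n-1-\alpha}{p_{0}+1}\,I.
\]
An entirely parallel computation on the second equation, multiplied by $|x|^{-\beta}\,x\cdot\nabla v^{q_{0}}(x)$ and integrated on an annular half-ball in $\mathbb{R}^{n}_{+}$ (with no contribution from the flat boundary $\{x_{n}=0\}$ because $x\cdot\nu\equiv 0$ there), and using $\nabla\cdot(x|x|^{-\beta})=(n-\beta)|x|^{-\beta}$, delivers
\[
\iint|y|^{-\alpha}|x|^{-\beta}u^{p_{0}}(y)v^{q_{0}}(x)\bigl(x\cdot\nabla_{x}P_{\lambda}\bigr)(x,y,\mu)\,dx\,dy\;=\;-\frac{n-\beta}{q_{0}+1}\,I.
\]

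Summing the last two displays and invoking Euler's identity $(x\cdot\nabla_{x}+y\cdot\nabla_{y})P_{\lambda}(x,y,\mu)=(\lambda-\mu)P_{\lambda}(x,y,\mu)$ collapses the left-hand side to $(\lambda-\mu)I$, so that $(\lambda-\mu)I=-\bigl[\tfrac{n-1-\alpha}{p_{0}+1}+\tfrac{n-\beta}{q_{0}+1}\bigr]I$, and dividing by $I>0$ produces the asserted identity. The main technical hurdle will be justifying that the surface contributions produced by integration by parts vanish as $\varepsilon\to 0$ and $R\to\infty$. At infinity, the function $R\mapsto\int_{\partial B_{R}}|y|^{-\alpha}u^{p_{0}+1}\,dS$ lies in $L^{1}(0,\infty)$ since its $R$-integral equals $I$; by the elementary fact that $f\in L^{1}(0,\infty)$ forces $\liminf_{R\to\infty}Rf(R)=0$, a sequence $R_{k}\to\infty$ exists along which all offending boundary integrals (including the mixed ones involving $F$, since $u=F$) vanish simultaneously, and analogously for $v$ on half-hemispheres. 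Near the origin, the $C^{1}$ regularity of $u,v$ combined with the constraints $\alpha<n-1$ and $\beta<n$ implicit in $I<\infty$ gives boundary terms of size $\varepsilon^{n-1-\alpha}$ and $\varepsilon^{n-\beta}$ respectively, which vanish in the limit.
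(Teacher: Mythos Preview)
Your proof is correct and follows essentially the same Pohozaev-type strategy as the paper: both integrate $|y|^{-\alpha}u^{p_{0}}(y\cdot\nabla u)$ and $|x|^{-\beta}v^{q_{0}}(x\cdot\nabla v)$ by parts over annuli, pick sequences $R_{k}\to\infty$, $\varepsilon_{k}\to0$ along which the surface terms vanish (via the $L^{1}$ integrability of the radial profile), and then combine the two identities using the joint homogeneity of $P_{\lambda}$. The only cosmetic difference is that you package the derivative computation as Euler's identity $(x\cdot\nabla_{x}+y\cdot\nabla_{y})P_{\lambda}=(\lambda-\mu)P_{\lambda}$, whereas the paper writes out $y\cdot\nabla_{y}P_{\lambda}$ and $x\cdot\nabla_{x}P_{\lambda}$ explicitly and sums; these are the same calculation.
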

Clearly, according to the above theorem,  we have the following Liouville type result for nonnegative solutions of the single weighted integral system \eqref{yu1}.
\begin{cor}
For $0<\mu<n-2\lambda$, assume that
\begin{equation}\nonumber
	\frac{n-1-\alpha}{p_{0}+1}+\frac{n-\beta}{q_{0}+1}\not=\mu-\lambda,
\end{equation}
then there are no non-trivial solutions $\left(u,v\right)\in L^{p_{0}+1}\left(\partial \mathbb{R}^{n}_{+}\right)\times L^{q_{0}+1}\left(\mathbb{R}^{n}_{+}\right)$ of the integral system \eqref{yu1}.
\end{cor}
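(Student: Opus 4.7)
The plan is a short contrapositive argument based on Theorem \ref{thm4}. Suppose, for contradiction, that \eqref{yu1} admits a non-trivial pair $(u,v)\in L^{p_{0}+1}(\partial\R^{n}_{+})\times L^{q_{0}+1}(\R^{n}_{+})$. My goal is to verify the two hypotheses of Theorem \ref{thm4}, apply it, and read off a contradiction with the standing assumption $\frac{n-1-\alpha}{p_{0}+1}+\frac{n-\beta}{q_{0}+1}\neq\mu-\lambda$.

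First, I would upgrade the regularity of $(u,v)$. The regularity-lifting statement Theorem \ref{re1}, applied to the single-weight system \eqref{yu1}, enlarges the integrability of $(u,v)$ to a full range of Lebesgue spaces on $\partial\R^{n}_{+}$ and $\R^{n}_{+}$; combined with the smoothness of the kernel $P_{\lambda}(x,y,\mu)$ away from the singular set $\{x^{\prime}=y,\ x_{n}=0\}$, a standard dominated-convergence bootstrap carried out directly on \eqref{yu1} then promotes $(u,v)$ to $C^{1}(\partial\R^{n}_{+})\times C^{1}(\R^{n}_{+})$. Second, I would verify the weighted integrability $(u,v)\in L^{p_{0}+1}(|y|^{-\alpha}dy,\partial\R^{n}_{+})\times L^{q_{0}+1}(|x|^{-\beta}dx,\R^{n}_{+})$ by splitting each integral into $B_{1}(0)$ and its complement: on $B_{1}(0)$ I would feed in the precise asymptotics of Theorem \ref{re2} (which pin down the rates $u(y)\sim|y|^{-\alpha}$ and $v(x)\sim x_{n}^{\lambda}|x|^{-\beta}$ near the origin) to dominate the weighted integrand under the exponent constraints in \eqref{in4}, while on the complement $\R^{n}\setminus B_{1}(0)$ the weight is bounded and the unweighted hypothesis already supplies integrability.

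With both hypotheses of Theorem \ref{thm4} in force, the theorem yields the identity
\[
\frac{n-1-\alpha}{p_{0}+1}+\frac{n-\beta}{q_{0}+1}=\mu-\lambda,
\]
which flatly contradicts the assumption of the corollary. Hence no non-trivial solution can exist. The principal obstacle in this program is the weighted-integrability check near the origin, where the singular weights $|y|^{-\alpha}$ and $|x|^{-\beta}$ compete with the asymptotic blow-up of $u$ and $v$ supplied by Theorem \ref{re2}; the regularity upgrade and the final invocation of Theorem \ref{thm4} are then essentially formal.
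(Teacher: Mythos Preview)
Your contrapositive strategy is precisely what the paper has in mind: the corollary is stated immediately after Theorem~\ref{thm4} with only the word ``Clearly'' as justification, so the paper treats it as nothing more than the contrapositive of that theorem and offers no further argument.

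Where you go beyond the paper is in trying to bridge the gap between the corollary's hypothesis (unweighted $L^{p_{0}+1}\times L^{q_{0}+1}$) and the hypotheses of Theorem~\ref{thm4} ($C^{1}$ regularity together with the \emph{weighted} integrability $L^{p_{0}+1}(|y|^{-\alpha}dy)\times L^{q_{0}+1}(|x|^{-\beta}dx)$). That care is warranted, since the paper simply glosses over this point. However, the specific tools you invoke---Theorems~\ref{re1} and~\ref{re2}---are stated and proved for the \emph{double}-weighted system~\eqref{pm1}, not for the single-weighted system~\eqref{yu1}. In particular, the asymptotics you quote from Theorem~\ref{re2}, namely $u(y)\sim c|y|^{-\alpha}$ near the origin, pertain to~\eqref{pm1}; if you feed that rate into $\int_{B_{1}^{n-1}}u^{p_{0}+1}|y|^{-\alpha}\,dy$ you obtain an integrand of order $|y|^{-\alpha(p_{0}+2)}$, which is not integrable in general and would sink the argument. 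For~\eqref{yu1} the integral defining $u$ carries no $|y|^{-\alpha}$ factor, so the correct analogue of Theorem~\ref{re2} yields $u(y)\to\int_{\mathbb{R}^{n}_{+}}x_{n}^{\lambda}|x|^{-\mu-\beta}v^{q_{0}}(x)\,dx$, i.e.\ $u$ is bounded near the origin; the weighted integrability near $0$ then follows once $\alpha<n-1$, and similarly for $v$. So your program is sound, but you must adapt the regularity and asymptotic statements to~\eqref{yu1} rather than cite Theorems~\ref{re1} and~\ref{re2} verbatim.
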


The second part of this paper is devoted to the study of the Hartree type elliptic equations on half space, we consider
\begin{equation}\label{main1}
	\left\lbrace 
	\begin{aligned}
		-\Delta u(x)&=\left(\int_{\partial\mathbb{R}^{n}_{+}}\frac{F\left(u(y)\right)}{|x|^{\beta}|x-y|^{\mu}|y|^{\alpha}}dy\right)x_{n}^{\lambda}g\left(u(x)\right),\ \ x\in\mathbb{R}^{n}_{+},	\\	
		\frac{\partial u}{\partial \upsilon}(y)&=\left(\int_{\mathbb{R}^{n}_{+}}\frac{G\left(u(x)\right)x_{n}^{\lambda}}{|x|^{\beta}|x-y|^{\mu}|y|^{\alpha}}dx\right)f\left(u(y)\right),\ \ y\in\partial\mathbb{R}^{n}_{+}.
	\end{aligned}
	\right.
\end{equation}	
where the parameters $\lambda,\alpha,\mu,\beta$ and the functions $G,F,f,g$ will be given specific conditions later. It's well-known that, Hartree type equation is an essential equation in mathematics and physics for the study of the Hartree-Fock model. It is also widely used in Bose–Einstein condensates theory to study how to avoid collapse phenomena. Moreover, the boson star is used to describe the source of dark matter in classical quantum mechanics. Then problem \eqref{main1} plays an important role in development of Black Hole theory. For convenience, the reader may see \cite{ELgartA,LiebYau} and the references therein for more backgrounds about the Hartree type equations.

In term of the Hartree type equations, the qualitative properties of solutions such as symmetry, monotonicity and non-existence have received a great deal of interest in the last years. For the symmetry and monotonicity results, by applying the various versions of the method of moving plane, Lei \cite{YL2}, Du and Yang \cite{DY} finished the classification of positive solutions for Hartree type equation with critical exponent in the whole space. In \cite{ChenY}, the authors established the same non-existence results for Hartree type equation with the boundary conditions on the half space. As an application of inequality \eqref{in2}, we are ready to study the monotonicity and non-existence results of positive solution for problem \eqref{main1} via moving plane argument. In order to present our main result precisely, we firstly give the definition of weak solution to the Hartree type equations \eqref{main1}. 
\begin{Def}
We call that  $u\in W^{1,2}_{loc}(\mathbb{R}^{n}_{+})\cap C^{0}(\overline{\mathbb{R}^{n}_{+}})$ is a weak solution of Hartree type elliptic equations \eqref{main1} if it satisfies for all $\varphi\in C^{\infty}_{c}(\overline{\mathbb{R}^{n}_{+}})$,
\begin{equation}\nonumber
	\begin{aligned}
		\int_{\mathbb{R}^{n}_{+}}\nabla u(x)\nabla \varphi(x) dx&=
		\int_{\mathbb{R}^{n}_{+}}\int_{\partial\mathbb{R}^{n}_{+}}\frac{F(u(y))x_{n}^{\lambda}g(u(x))\varphi(x)}{|x|^{\beta}|x-y|^{\mu}|y|^{\alpha}}dxdy\\
		&+\int_{\mathbb{R}^{n}_{+}}\int_{\partial\mathbb{R}^{n}_{+}}\frac{G(u(x))x_{n}^{\lambda}f(u(y))\varphi(y)}{|x|^{\beta}|x-y|^{\mu}|y|^{\alpha}}dxdy
	\end{aligned}
\end{equation}
on the upper half space.
\end{Def}
Now we are in a position to present our main results.
\begin{thm}\label{app1}
Assume that $0<\alpha, \beta<n$, $0<\lambda<n$ and $0<\mu<n-2\lambda$. Let $u\in W^{1,2}_{loc}(\mathbb{R}^{n}_{+})\cap C^{0}(\overline{\mathbb{R}^{n}_{+}})$ be a positive solution of the system \eqref{main1}. Suppose further the functions $f(t),~g(t),~F(t),~G(t):\left[0,\infty\right)\rightarrow\left[0,\infty\right)$ are continuous in $\left[0,\infty\right)$ satisfying the following conditions,
\begin{enumerate}
\item $f(t)$, $g(t)$, $F(t)$ and $G(t)$ are  increasing in $\left(0,+\infty\right)$,

\item $H(t)=\frac{F(t)}{t^{\frac{2(n-1)-(2\alpha+\mu)}{n-2}}}$, $K(t)=\frac{G(t)}{t^{\frac{2n+2\lambda-(2\beta+\mu)}{n-2}}}$, $k(t)=\frac{g(t)}{t^{\frac{n+2\lambda+2-(2\beta+\mu)}{n-2}}}$ and
$h(t)=\frac{f(t)}{t^{\frac{n-(2\alpha+\mu)}{n-2}}}$ are non-increasing in $\left(0,+\infty\right)$.
\end{enumerate}
Then $u$ depend only on $x_{n}$.
\end{thm}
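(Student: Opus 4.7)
The plan is to prove Theorem \ref{app1} by the method of moving planes in integral form, following Chen--Li--Ou \cite{WCCL,WCCLB}. The first step is to recast the PDE system \eqref{main1} as an equivalent integral system of the shape \eqref{pm1} by integrating against the Green's function for the Laplacian on the half-space and the Poisson-type kernel $P_\lambda(x,y,\mu)$, so that $u|_{\partial\mathbb{R}^n_+}$ and $u|_{\mathbb{R}^n_+}$ satisfy a coupled pair of integral equations whose right-hand sides have exactly the weighted double-convolution structure governed by Theorem \ref{thm1}. The four scaling hypotheses in item 2 on $H,K,h,k$ are precisely the critical-exponent balance that aligns the nonlinearities $F(u)g(u)$ and $G(u)f(u)$ with the Stein--Weiss exponents $p_0,q_0$, while the monotonicity in item 1 supplies the sign control needed when comparing $u$ with a reflection.

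\textbf{The moving planes step.} Fix a tangential direction $e\in\mathbb{R}^{n-1}$, let $T_\lambda=\{x\cdot e=\lambda\}$, $\Sigma_\lambda=\{x\cdot e<\lambda\}$, and denote by $x^\lambda$ the reflection of $x$ across $T_\lambda$ and by $u_\lambda(x)=u(x^\lambda)$ the reflected solution. For $\lambda\ge 0$ and $x\in\Sigma_\lambda$ one has $|x^\lambda|\ge|x|$, and together with the identity $|x^\lambda-y^\lambda|=|x-y|$ this yields a one-sided integral estimate for $(u_\lambda-u)^+$ both on $\Sigma_\lambda\cap\partial\mathbb{R}^n_+$ and on $\Sigma_\lambda$. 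Splitting each nonlinearity via item 2 and applying Theorem \ref{thm1} combined with H\"older's inequality produces the core moving-plane estimate
\begin{equation*}
\bigl\|(u_\lambda-u)^+\bigr\|_{L^{p_0+1}(\partial\mathbb{R}^n_+)}+\bigl\|(u_\lambda-u)^+\bigr\|_{L^{q_0+1}(\mathbb{R}^n_+)}\le C\,\Theta(\lambda)\,\Bigl(\bigl\|(u_\lambda-u)^+\bigr\|_{L^{p_0+1}(\partial\mathbb{R}^n_+)}+\bigl\|(u_\lambda-u)^+\bigr\|_{L^{q_0+1}(\mathbb{R}^n_+)}\Bigr),
\end{equation*}
where $\Theta(\lambda)$ consists of $L^{p_0+1}$ and $L^{q_0+1}$ norms of $u$ over the positivity sets of $u_\lambda-u$, and tends to zero as these sets shrink. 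For $\lambda$ sufficiently large the prefactor $C\Theta(\lambda)$ is $<1$, forcing $u\ge u_\lambda$; sliding $\lambda$ downwards via the usual continuity-and-contradiction argument yields symmetry of $u$ about $\{x\cdot e=0\}$. Running the argument for every $e\in\mathbb{R}^{n-1}$ produces symmetry about every hyperplane through the origin perpendicular to $\partial\mathbb{R}^n_+$, so $u=U(|x'|,x_n)$.

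\textbf{Independence of $x'$ and the main obstacle.} To upgrade the radial symmetry in $x'$ to full independence of $x'$, I would repeat the moving-plane procedure with reflection planes that do not pass through the origin. The weights $|x|^{-\beta}$ and $|y|^{-\alpha}$ are not preserved under such reflections, but after centering the estimate at an arbitrary boundary point $y_0\in\partial\mathbb{R}^n_+$ the weight defect can be absorbed into a subcritical perturbation controlled by Theorem \ref{thm1} and the higher integrability of $u$ supplied by Theorem \ref{re1}, thanks again to the critical exponents in item 2. Symmetry about every boundary point, combined with the already-established $x'$-radial structure, forces $U$ to be constant in its first variable, hence $u=u(x_n)$. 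The main obstacle is exactly this translation step: when $\alpha=\beta=0$ the invariance of the equation makes it immediate, whereas for $\alpha,\beta>0$ one must carefully exploit the criticality encoded in item 2 to make the weight mismatch vanish in the limit. An alternative route I would try if the direct argument proves too delicate is a Kelvin-type inversion centered at $y_0$, under which any shifted moving plane becomes a plane through the origin in the inverted variables, thereby restoring the weight comparison $|x^\lambda|\ge|x|$ and allowing the first step of the proof to be reapplied verbatim.
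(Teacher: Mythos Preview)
Your proposal has a genuine gap at the very first move. Theorem \ref{app1} assumes only $u\in W^{1,2}_{loc}(\mathbb{R}^n_+)\cap C^0(\overline{\mathbb{R}^n_+})$; there is no global integrability and no decay at infinity. Consequently the quantity $\Theta(\lambda)$ in your core estimate --- built from $L^{p_0+1}$ and $L^{q_0+1}$ norms of $u$ over the positivity sets --- has no reason to be finite, let alone small, for $\lambda$ large. The step ``for $\lambda$ sufficiently large the prefactor $C\Theta(\lambda)$ is $<1$'' simply fails, and the moving plane never starts. The Kelvin inversion that you relegate to a fallback for the translation step is in fact \emph{mandatory from the outset}: the paper begins by fixing an arbitrary $x_\rho\in\partial\mathbb{R}^n_+$, performing the Kelvin transform $v(x)=|x-x_\rho|^{-(n-2)}u\bigl((x-x_\rho)/|x-x_\rho|^2+x_\rho\bigr)$ (and analogous transforms $\omega,z$ of the convolution terms), and only then does one have the decay $v(x)\le C|x-x_\rho|^{-(n-2)}$, $\omega(x)\le C|x-x_\rho|^{-(2\beta+\mu)}$, $z(y)\le C|y-x_\rho|^{-(2\alpha+\mu)}$ that makes the starting step legitimate. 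The critical exponents in item~2 are exactly what makes the transformed system \eqref{main2} have the same structure as the original, so that $H,K,h,k$ appear in place of $F,G,f,g$.

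There is a second methodological discrepancy. You propose to rewrite \eqref{main1} as an integral system of the shape \eqref{pm1} and run the Chen--Li--Ou integral-form moving plane. The paper does \emph{not} do this: it keeps the PDE formulation and runs an energy-type moving plane, testing $-\Delta(v-v_\delta)$ against $\phi_\varepsilon^2(v-v_\delta)^+$ with a cut-off $\phi_\varepsilon$ that excises the Kelvin singularity at $p^\delta$, then integrating by parts and letting $\varepsilon\to0$. The Stein--Weiss inequality (Theorem \ref{thm1}) enters only to control the nonlocal pieces $\omega-\omega_\delta$ and $z-z_\delta$ (Lemma \ref{la1}), while the local pieces are handled by Sobolev and trace inequalities (Lemma \ref{la2}). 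Since $x_\rho$ is arbitrary, symmetry of $v$ about $T_{p_1}$ for \emph{every} center $x_\rho$ gives directly that $u$ depends only on $x_n$; no separate ``translation step'' with weight mismatch is needed.
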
	
It is worth observing that Theorem \ref{app1} provide a useful method to discuss the non-existence of positive solutions for Hartree type equations \eqref{main1} with the special case $\alpha=\beta=0$. The main content is the next result. 
\begin{cor}\label{rem}
Under the assumption of Theorem \ref{app1}, Suppose that at least one of the functions $h$, $k$, $H$ and $K$ is not a constant in $\Big(0, \sup\limits_{y\in\partial\mathbb{R}^{n}_{+}}u(y)\Big)$ or $\Big(0,\sup\limits_{x\in\mathbb{R}^{n}_{+}}u(x)\Big)$. Then $u=\tilde{c}$ with $F(\tilde{c})=G(\tilde{c})=0$.
\end{cor}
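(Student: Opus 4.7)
The starting point is Theorem \ref{app1}, which shows that any positive weak solution $u$ of \eqref{main1} depends only on the normal coordinate: $u(x) = U(x_n)$. In particular the boundary trace is a single constant $c_0 := U(0) = \sup_{y \in \partial\mathbb{R}^n_+} u(y)$. Substituting $u(y) = c_0$ into the convolution in the first equation of \eqref{main1} (with $\alpha = \beta = 0$) and performing the change of variables $y' = x' + x_n z'$, the inner planar integral becomes $C_{n,\mu} x_n^{n-1-\mu}$ when $\mu > n-1$, reducing the PDE to the ODE
\begin{equation*}
-U''(x_n) = C_{n,\mu}\, F(c_0)\, x_n^{\lambda + n - 1 - \mu}\, g(U(x_n)), \qquad x_n > 0;
\end{equation*}
when $\mu \leq n-1$ the planar integral diverges and finiteness of the left-hand side forces $F(c_0) = 0$ directly. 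A parallel reduction of the Neumann-type boundary condition yields
\begin{equation*}
-U'(0) = C'_{n,\mu}\, f(c_0) \int_0^\infty G(U(t))\, t^{\lambda + n - 1 - \mu}\, dt.
\end{equation*}

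With this one-dimensional reduction in hand, I would establish a Pohozaev-type identity for the reduced ODE by multiplying by $x_n U'(x_n)$, integrating by parts on $(0, R)$, and letting $R \to \infty$. The resulting identity relates $\int_0^\infty (U')^2\, dx_n$, a weighted integral of a primitive of $g$, and the boundary relation encoded in $-U'(0)$; the critical normalizations defining $h, k, H, K$ are precisely those making this identity self-cancelling when $f, g, F, G$ are pure powers (equivalently, when all four quotients are identically constant). The hypothesis that at least one of $h, k, H, K$ fails to be constant on its relevant interval therefore introduces a definite-sign defect in the identity, which can be reconciled only when $\int_0^\infty (U'(x_n))^2\, dx_n = 0$. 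Hence $U$ is a constant, $U \equiv \tilde c$.

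Plugging $U \equiv \tilde c$ back into the PDE yields $F(\tilde c)\, g(\tilde c) = 0$; combined with the monotonicity of $g$ from assumption (1) of Theorem \ref{app1} and the fact that the non-constancy hypothesis excludes the degenerate case $g \equiv 0$ on $(0, \tilde c]$, we deduce $g(\tilde c) > 0$, hence $F(\tilde c) = 0$. The analogous treatment of the boundary relation gives $G(\tilde c) = 0$. The main obstacle will be the rigorous justification of the Pohozaev identity — in particular, verifying the vanishing of boundary terms at $x_n = 0$ and $x_n \to \infty$ requires a-priori asymptotic estimates on $U$ and $U'$, which I would derive by adapting the arguments of Theorem \ref{re2} to the present one-dimensional reduced problem and by exploiting the explicit weight $x_n^{\lambda + n - 1 - \mu}$ appearing on the right-hand side.
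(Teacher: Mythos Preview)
Your reduction to a one–dimensional problem via Theorem~\ref{app1} is the correct first step, and your remark on the divergence of the planar integral when $\mu\le n-1$ is a valid observation. But the Poho\v{z}aev route you then propose is both far more elaborate than needed and contains a genuine gap. The central claim---that non-constancy of one of $h,k,H,K$ produces a ``definite-sign defect'' in the identity forcing $\int_0^\infty (U')^2=0$---is asserted without any mechanism. The critical exponents in the definitions of $h,k,H,K$ are calibrated to the \emph{Kelvin transform} used in proving Theorem~\ref{app1}, not to a one-dimensional Poho\v{z}aev balance; there is no reason to expect the self-cancellation you describe, and you give no computation showing it. You yourself flag the boundary-term control as ``the main obstacle,'' but the real obstacle is earlier: the sign structure you rely on has not been established.

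The paper bypasses all of this with an elementary convexity argument. After the reduction, the first equation reads $-U''(x_n)=(\text{non-negative factor})$, so $U$ is concave on $(0,\infty)$. A positive concave function on $[0,\infty)$ must satisfy $U'\ge 0$ everywhere (otherwise concavity drives $U\to-\infty$). The reduced Neumann condition, on the other hand, gives $U'(0)$ the opposite sign. The only way to reconcile these is $U'\equiv 0$, i.e.\ $U\equiv\tilde c$; substituting back into the two equations then forces $F(\tilde c)=G(\tilde c)=0$. No integral identity, no asymptotics of $U$ or $U'$, and no appeal to Theorem~\ref{re2} are required---just concavity, positivity, and the sign at the boundary.
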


The rest of this paper is organized as follows. In Section 2, we mainly consider the sharp Stein-Weiss type inequality with a general kernel on the upper half space. Then we apply the Riesz's rearrangement inequality and Lorentz norm to obtain the existence of extremals for this inequality. In Section 3, by applying the regularity lifting argument and the method of moving plane in integral form, we obtain the qualitative properities of the non-negative solutions to the integral system with the double weights. In Section 4, by using the Poho$\check{z}$aev identiy, we shall show that the necessary condition for the existence of solutionsof the  single weighted integral system. In the last section, we study the weak solutions to Hartree type equation and prove the symmetry and non-existence results.

\section{Stein-Weiss type inequality and sharp constant}
In this section, we will investigate the Stein-Weiss type inequality \eqref{in2} and the existence of extremal functions for the sharp constant $C(\alpha,\beta,\mu,\lambda,p,q^{\prime},n)$. For the sake of simplicity, we firstly introduce some symbols by defining
\begin{equation}\nonumber
	B_{R}(x)=\left\lbrace \xi\in\mathbb{R}^{n}: |\xi-x|<R, x\in\mathbb{R}^{n} \right\rbrace, 
\end{equation}
\begin{equation}\nonumber
	B_{R}^{n-1}(x)=\left\lbrace \xi\in\partial\mathbb{R}^{n}_{+}: |\xi-x|<R, x\in\partial\mathbb{R}^{n}_{n} \right\rbrace, 
\end{equation}
\begin{equation}\nonumber
	B^{+}_{R}(x)=\left\lbrace \xi=(\xi_{1},\xi_{2},...,\xi_{n})\in B_{R}(x): \xi_{n}>0, x\in\mathbb{R}^{n}\right\rbrace. 
\end{equation}
In particular, we write $C$ or $C_{i}$ to denote fifferent non-negative constants, where the value may be different from line to line.
\subsection{Stein-Weiss type inequality} In this subsection, we will use the following integral estimates to prove Theorem \ref{thm1}. The method of these integral estimates on the upper half space was established in \cite{JDou}.
\begin{lem}\label{Lem1}
Assume that $W(x)$ and $U(y)$ are two positive locally integrable functions defined on $\mathbb{R}^{n}_{+}$ and $\partial\mathbb{R}^{n}_{+}$ respectively, for $1<p\leq q<\infty$ and $f$ is non-negative on $\partial\mathbb{R}^{n}_{+}$, then
\begin{equation}\label{lem1}
\left(\int_{\mathbb{R}^{n}_{+}}W(x)\left(\int_{B^{n-1}_{|x|}}f(y)dy\right)^{q}dx\right)^{\frac{1}{q}}\leq C(p,q)\left(\int_{\partial\mathbb{R}^{n}_{+}}f^{p}(y)U(y)dy\right)^{\frac{1}{p}},
\end{equation}
holds if and only if
\begin{equation}\label{lem2}
A_{0}=\sup\limits_{R>0}\left\lbrace \left(\int_{|x|\geq R}W(x)dx\right)^{\frac{1}{q}}\left(\int_{|y|\leq R}U^{1-p^{\prime}}(y)dy\right)^{\frac{1}{p^{\prime}}}\right\rbrace <\infty.
\end{equation}
While,
\begin{equation}\label{lem3}
	\left(\int_{\mathbb{R}^{n}_{+}}W(x)\left(\int_{\partial\mathbb{R}^{n}_{+}\backslash B^{n-1}_{|x|}dy}f(y)\right)^{q}dx\right)^{\frac{1}{q}}\leq C(p,q)\left(\int_{\partial\mathbb{R}^{n}_{+}}f^{p}(y)U(y)dy\right)^{\frac{1}{p}},
\end{equation}
holds if and only if
\begin{equation}\label{lem4}
	A_{1}=\sup\limits_{R>0}\left\lbrace \left(\int_{|x|\leq R}W(x)dx\right)^{\frac{1}{q}}\left(\int_{|y|\geq R}U^{1-p^{\prime}}(y)dy\right)^{\frac{1}{p^{\prime}}}\right\rbrace <\infty.
\end{equation}
\end{lem}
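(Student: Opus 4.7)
The plan is to reduce both statements to the classical one-dimensional Muckenhoupt--Bradley weighted Hardy inequality, exploiting that the inner integrand depends on $x$ only through $r=|x|$.

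\emph{Necessity.} I would test \eqref{lem1} with $f(y)=U(y)^{1-p'}\chi_{B^{n-1}_R}(y)$: for every $x$ with $|x|\ge R$ the inner integral equals $I_R:=\int_{B^{n-1}_R}U^{1-p'}\,dy$, so the left-hand side of \eqref{lem1} is at least $I_R\bigl(\int_{|x|\ge R}W\,dx\bigr)^{1/q}$, while the right-hand side equals $C\,I_R^{1/p}$; rearranging yields $A_0\le C$. The analogous choice $f=U^{1-p'}\chi_{\partial\mathbb{R}^n_+\setminus B^{n-1}_R}$ inserted in \eqref{lem3}, tested against $\{|x|\le R\}$, gives $A_1\le C$.

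\emph{Sufficiency for \eqref{lem1}.} Introduce spherical coordinates $x=r\sigma$, $y=t\omega$ with $\sigma$ in the open upper unit hemisphere $S^+$ and $\omega\in S^{n-2}$, together with the averaged radial weights
$$w(r):=r^{n-1}\int_{S^+}W(r\sigma)\,d\sigma,\qquad \rho(t):=t^{n-2}\int_{S^{n-2}}U(t\omega)^{1-p'}\,d\omega.$$
H\"older on the sphere $S^{n-2}$ with measure $t^{n-2}d\omega$ gives
$$\int_{S^{n-2}}f(t\omega)\,t^{n-2}\,d\omega\le F(t)\,\rho(t)^{1/p'},\qquad F(t):=\Bigl(\int_{S^{n-2}}f(t\omega)^p U(t\omega)\,t^{n-2}\,d\omega\Bigr)^{1/p},$$
so Fubini bounds the left-hand side of \eqref{lem1} by $\bigl(\int_0^\infty w(r)(\int_0^r F(t)\rho(t)^{1/p'}\,dt)^q dr\bigr)^{1/q}$. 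Apply the one-dimensional Muckenhoupt--Bradley Hardy inequality to $g:=F\rho^{1/p'}$ with 1-D weight $\tilde u:=\rho^{1-p}$ (chosen so that $g^p\tilde u=F^p$); its Muckenhoupt constant equals $\sup_R\bigl(\int_R^\infty w\,dr\bigr)^{1/q}\bigl(\int_0^R\rho\,dt\bigr)^{1/p'}$, which by Fubini equals $A_0$. Since $\int_0^\infty F(t)^p\,dt=\int_{\partial\mathbb{R}^n_+}f^p U\,dy$, this closes the estimate.

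\emph{Sufficiency for \eqref{lem3}.} The same reduction with the dual half-line Hardy inequality $(\int_0^\infty w(r)(\int_r^\infty g\,dt)^q dr)^{1/q}\le C(\int_0^\infty g^p\tilde u\,dt)^{1/p}$ yields \eqref{lem3}, its Muckenhoupt characterization matching $A_1$. The main obstacle is verifying that after the spherical H\"older step the resulting 1-D Muckenhoupt constants coincide (up to constants depending only on $n,p,q$) with $A_0$, $A_1$ as stated; a cleaner alternative bypassing spherical H\"older is a direct dyadic decomposition $\{2^k\le|x|<2^{k+1}\}$ of $\mathbb{R}^n_+$, applying H\"older on each annulus $\times B^{n-1}_{2^{k+1}}$ and summing the geometric series supplied by \eqref{lem2} or \eqref{lem4}.
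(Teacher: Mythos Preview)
The paper does not actually prove this lemma: it is quoted verbatim from Dou \cite{JDou} (``The method of these integral estimates on the upper half space was established in \cite{JDou}''), so there is no in-paper argument to compare with. Your reduction to the one-dimensional Muckenhoupt--Bradley Hardy inequality via polar coordinates is the standard route to such results and is correct; this is almost certainly how \cite{JDou} proceeds as well.

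A few remarks on the details. Your necessity argument is the canonical one; the only technical wrinkle is the case $\int_{B^{n-1}_R}U^{1-p'}=\infty$, handled by truncation. For sufficiency, the ``main obstacle'' you flag is not an obstacle at all: with your notation, $(1-p)(1-p')=1$ gives $\tilde u^{\,1-p'}=\rho$, and then Fubini in spherical coordinates yields exactly
\[
\int_R^\infty w(r)\,dr=\int_{\{|x|\ge R\}\cap\mathbb{R}^n_+}W(x)\,dx,\qquad
\int_0^R \rho(t)\,dt=\int_{|y|\le R}U(y)^{1-p'}\,dy,
\]
so the one-dimensional Muckenhoupt constant equals $A_0$ on the nose, with no dimensional factor. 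The dual half-line Hardy inequality similarly recovers $A_1$. Your alternative dyadic-annulus suggestion would also work and gives the same constants up to a factor depending only on $p,q$, but the polar-coordinate reduction is cleaner and matches $A_0,A_1$ exactly.
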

By applying the above Lemma, we are ready to complete the proof of Theorem \ref{thm1}.

\textbf{Proof of Theorem \ref{thm1}.} Without loss of generality, we may suppose that function $f$ is positive. In addition, we consider the double weighted integral operator related to a general kernel as follows
\begin{equation}\nonumber
P_{\lambda}(f)(x)=\int_{\partial\mathbb{R}^{n}_{+}}P_{\lambda}(x,y,\mu)f(y)dy.
\end{equation}
Obviously, we note that inequality \eqref{in2} is equivalent to the following inequality, 
\begin{equation}\nonumber
\|P_{\lambda}(f)|x|^{-\beta}\|_{L^{q}(\mathbb{R}^{n}_{+})}\leq C(n,\alpha,\beta,\lambda,\mu, p,q^{\prime})\|f|y|^{\alpha}\|_{L^{p}(\partial\mathbb{R}^{n}_{+})}.
\end{equation}
Since $q>1$, we may split the integral items into the following three parts, that is 
\begin{equation}\nonumber
\|P_{\lambda}(f)|x|^{-\beta}\|_{L^{q}(\mathbb{R}^{n}_{+})}\lesssim P_{\lambda,1}+P_{\lambda,2}+P_{\lambda,3},
\end{equation}
where
\begin{equation}\nonumber
\begin{aligned}
& P_{\lambda,1}=\int_{\mathbb{R}^{n}_{+}}\left(|x|^{-\beta}\int_{B^{n-1}_{\frac{|x|}{2}}}\frac{x_{n}^{\lambda}f(y)}{\left(|x^{\prime}-y|^{2}+x_{n}^{2}\right)^{\frac{\mu}{2}}}dy\right)^{q}dx,\\
& P_{\lambda,2}=\int_{\mathbb{R}^{n}_{+}}\left(|x|^{-\beta}\int_{\partial\mathbb{R}^{n}_{+}\backslash B^{n-1}_{2|x|}}\frac{x_{n}^{\lambda}f(y)}{\left(|x^{\prime}-y|^{2}+x_{n}^{2}\right)^{\frac{\mu}{2}}}dy\right)^{q}dx,
\end{aligned}
\end{equation}
and
\begin{equation}\nonumber
P_{\lambda,3}=\int_{\mathbb{R}^{n}_{+}}\left(|x|^{-\beta}\int_{B^{n-1}_{2|x|}\backslash B^{n-1}_{\frac{|x|}{2}}}\frac{x_{n}^{\lambda}f(y)}{\left(|x^{\prime}-y|^{2}+x_{n}^{2}\right)^{\frac{\mu}{2}}}dy\right)^{q}dx.
\end{equation}
Based on the above analysis, we only need to prove
\begin{equation}\nonumber
P_{\lambda,i}\leq C(n,\alpha,\beta,\lambda,\mu, p,q^{\prime})\|f|y|^{\alpha}\|_{L^{p}(\partial\mathbb{R}^{n}_{+})}^{q}, \quad i=1,2,3.
\end{equation}

Firstly, we estimate $P_{\lambda,1}$. From the definition of $P_{\lambda,1}$, we get
\begin{equation}\label{prf1}
\begin{aligned}
P_{\lambda,1}&=\int_{\mathbb{R}^{n}_{+}}\left(|x|^{-\beta}\int_{B^{n-1}_{\frac{|x|}{2}}}\frac{x_{n}^{\lambda}f(y)}{\left(|x^{\prime}-y|^{2}+x_{n}^{2}\right)^{\frac{\mu}{2}}}dy\right)^{q}dx\\
&\lesssim\int_{\mathbb{R}^{n}_{+}}|x|^{-\beta q-(\mu-\lambda)q}\left(\int_{B^{n-1}_{\frac{|x|}{2}}}f(y)dy\right)^{q}dx.
\end{aligned}
\end{equation}
Set $W(x)=|x|^{-\beta q-(\mu-\lambda)q}$ and $U(y)=|y|^{\alpha p}$, according to Lemma \ref{Lem1}, to verify
\begin{equation}\nonumber
P_{\lambda,1}\leq C(n,\alpha,\beta,\lambda,\mu, p,q^{\prime})\|f|y|^{\alpha}\|^{q}_{L^{p}(\partial\mathbb{R}^{n}_{+})},
\end{equation}
one only need to show that $W(x)$ and $U(y)$ satisfies \eqref{lem2}. In fact, since  $\alpha<\frac{n-1}{p^{\prime}}$, for any $R>0$, we have
\begin{equation}\label{prf2}
	\begin{aligned}
\int_{|x|\geq R}W(x)dx&=\int_{|x|\geq R}|x|^{-\beta q-(\mu-\lambda)q}dx\\
&=\int_{\partial B_{1}^{+}}dy\int_{R}^{\infty}r^{-\beta q-(\mu-\lambda)q}dr\\
&=C(n,\beta,\lambda,\mu,q)R^{n-\beta q-(\mu-\lambda)q},
	\end{aligned}
\end{equation}
and
\begin{equation}\label{prf3}
	\begin{aligned}
\int_{|y|\leq R}U^{1-p^{\prime}}(y)dy &=\int_{|y|\leq R}\left(|y|^{\alpha p}\right)^{1-p^{\prime}}dy \\
&=\int_{s^{n-2}}d\nu\int_{0}^{R}r^{\alpha p(1-p^{\prime})}dr
&=C(n,\alpha, p)R^{\alpha p(1-p^{\prime})+n-1}.
	\end{aligned}
\end{equation}
It follows from \eqref{prf2}, \eqref{prf3} and \eqref{in2} that
\begin{equation}\nonumber
\begin{aligned}
\left(\int_{|x|\geq R} W(x)dx\right)^{\frac{1}{q}}\left(\int_{|y|\leq R}U^{1-p^{\prime}}(y)dy\right)^{p^{\prime}}&<C(n,\alpha,\beta,\lambda,\mu, p,q^{\prime})R^{-\beta-(\mu-\lambda)+\frac{n}{q}+\frac{\alpha p(1-p^{\prime})+n-1}{p^{\prime}}}\\
&=C(n,\alpha,\beta,\lambda,\mu, p,q^{\prime}).
\end{aligned}
\end{equation}

Next, we consider $P_{\lambda,2}$. Noticing that $|y|\geq 2x$, we know $|y-x|\geq \frac{|x|}{2}$. Setting $W(x)=|x|^{(-\beta+\lambda)q}$ and $U(y)=|y|^{(\mu+\alpha)p}$ in \eqref{lem3}, we know
\begin{equation}\nonumber
P_{\lambda,2}\lesssim \int_{\mathbb{R}^{n}_{+}}|x|^{(-\beta+\lambda)q}\left(\int_{\partial\mathbb{R}^{n}_{+}\backslash B^{n-1}_{2|x|}}f(y)|y|^{-\mu}dy\right)^{q}dx\leq C(n,\alpha,\beta,\lambda,\mu, p,q^{\prime})\|f|y|^{\alpha}\|^{q}_{L^{p}(\partial\mathbb{R}^{n}_{+})}.
\end{equation}
We only need to check that $W(x)$ and $U(y)$ satisfies the condition \eqref{lem4}. Since $\beta<\frac{n}{q}+\lambda$, for $R>0$ we have
\begin{equation}\nonumber
\begin{aligned}
\int_{|x|\geq R}W(x)dx&=\int_{|x|\geq R}|x|^{(-\beta+\lambda)q}dx=\int_{\partial B_{1}^{+}}d\nu\int_{R}^{\infty}r^{(-\beta+\lambda)q}dr\\
&=C(n,\beta,\lambda,q)R^{(-\beta+\lambda)q+n},
\end{aligned}
\end{equation}
and 
\begin{equation}\nonumber
	\begin{aligned}
\int_{|y|\leq R}U^{1-{p^{\prime}}}(y)dy&=\int_{|y|\leq R}|y|^{\left((\mu+\alpha)p\right)1-{p^{\prime}}}dy=\int_{s^{n-2}}d\nu\int_{0}^{R}r^{\left((\mu+\alpha)p\right)1-{p^{\prime}}}dr\\
&=C(n,\alpha\mu, p)R^{(\mu+\alpha)p(1-p^{\prime}+n-1)}.
\end{aligned}
\end{equation}
Combining the above estimates, it's easy to find the condition \eqref{lem4} holds. 

Finally, we estimate $P_{\lambda,3}$. By virtue of $\frac{|x|}{2}<|y|<2|x|$ and $\alpha+\beta\geq0$, it follows that
\begin{equation}\nonumber
|x-y|^{\alpha+\beta}< 3^{\alpha+\beta}|y| ^{\alpha+\beta}\leq 3^{\alpha+\beta}2^{\beta}|x|^{\beta}|y|^{\alpha}.
\end{equation}
Furthermore, we get
\begin{equation}\nonumber
\begin{aligned}
P_{\lambda,3}&=\int_{\mathbb{R}^{n}_{+}}\left(|x|^{-\beta}\int_{B^{n-1}_{2|x|}\backslash B^{n-1}_{\frac{|x|}{2}}}\frac{x_{n}^{\lambda}f(y)}{\left(|x^{\prime}-y|^{2}+x_{n}^{2}\right)^{\frac{\mu}{2}}}dy\right)^{q}dx\\
&\leq \int_{\mathbb{R}^{n}_{+}}\left(\int_{B^{n-1}_{2|x|}\backslash B^{n-1}_{\frac{|x|}{2}}}\frac{x_{n}^{\lambda}f(y)|y|^{\alpha}}{|x-y|^{\mu+\alpha+\beta}}dy\right)^{q}dx\\
&\leq \int_{\mathbb{R}^{n}_{+}}\left(\int_{\partial R^{n}_{+}}\frac{x_{n}^{\lambda}f(y)|y|^{\alpha}}{|x-y|^{\mu+\alpha+\beta}}dy\right)^{q}dx.
\end{aligned}
\end{equation}
Under the assumptions of Theorem \ref{thm1}, we know $0<\mu<n-2\lambda$ and $\frac{\mu-2\lambda}{2n}+\frac{\mu}{2(n-1)}<1$. Together with the results in \cite{MG}, we deduce
\begin{equation}\nonumber
P_{\lambda,3}\leq C(n,\alpha,\beta,\lambda,\mu, p,q^{\prime})\|f|y|^{\alpha}\|^{q}_{L^{p}(\partial\mathbb{R}^{n}_{+})}.
\end{equation}
Therefore, the proof is completed.
$\hfill{} \Box$

\phantom{===}

\subsection{The extremal functions for inequality \eqref{in2}} In this subsection, we will prove the existence of extremal functions for inequality \eqref{in2} which was obtained in the previous subsection. More precisely, we point out that thestudy of the existence of the extremal functions of the sharp constant is related to the following variational problem
\begin{equation}\label{uu1}
C(\alpha,\beta,\mu,\lambda,p,q^{\prime},n) :=\sup\left\{\|V(f)\|_{L^{q}(\mathbb{R}^{n}_{+})}: f\geq0,\ \ \|f\|_{L^{p}(\partial\mathbb{R}^{n}_{+})}\right\}.
\end{equation}
We shall exploit that the existence of maximiziers to the above supreme problem via the Riesz rearrangement and Lorentz norm, which implies that the extremal functions of inequality \eqref{in2} are radially symmetric and decreasing about some point. For a measurable function $f$ on $\partial\mathbb{R}^{n}_{+}$, and we introduce the Lorentz norm with $0<r,s<+\infty$ as follows
\begin{equation}\nonumber
\|f\|_{L^{r,s}(\partial\mathbb{R}^{n}_{+})} :=\left\lbrace 
\begin{aligned}
&\left(\int_{0}^{\infty}\left(\frac{1}{t}^{\frac{1}{r}}f^{*}(t)\right)^{s}\frac{dt}{t}\right)^{\frac{1}{s}},&\mbox{if}\ \ s<\infty,\\
&\ \ \sup_{t>0}t^{\frac{1}{p}}f^{*}(t),&\mbox{if}\ \ s=\infty, 
\end{aligned}
\right.
\end{equation}   
where $f^{*}(t)$ denote the decreasing and radially symmetric rearrangement function to $f$.

Clearly, for $1\leq p\leq\infty$, given some positive functions $f$, $g$ and $h$, then the following Riesz rearrangement inequality holds (See \cite{ELML,HBEL})
\begin{equation}\label{uu2}
I(f,g,h)\leq I(f^{*},g^{*},h^{*}), 
\end{equation}
with $I(f,g,h):=\displaystyle\int_{\mathbb{R}^{n}_{+}}\int_{\partial R^{n}_{+}}f(x)g(x-y)h(y)dxdy$.

In the following, we are ready to investigate the existence of the extremal functions.

\textbf{Proof of Theorem \ref{thm2}.}  Suppose that $\left\{f_{j}\right\}_{j}$ is a maximizing sequence of the maximizing problem \eqref{uu1}, i.e.
\begin{equation}\nonumber
\|f_{j}\|_{L^{p}(\partial\mathbb{R}^{n}_{+})}=1\quad \mbox{and} \quad \lim_{j\rightarrow+\infty}\|V(f_{j})\|_{L^{q}(\mathbb{R}^{n}_{+})}= C(\alpha,\beta,\mu,\lambda,p,q^{\prime},n).
\end{equation}
It follows from \eqref{uu2} that 
\begin{equation}\nonumber
\|f^{*}_{j}\|_{L^{p}(\partial\mathbb{R}^{n}_{+})}=\|f_{j}\|_{L^{p}(\partial\mathbb{R}^{n}_{+})}=1,\ \ \mbox{and}  \lim_{j\rightarrow+\infty}\|V(f_{j})\|_{L^{q}(\mathbb{R}^{n}_{+})}\leq \lim_{j\rightarrow+\infty}\|V(f^{*}_{j})\|_{L^{q}(\mathbb{R}^{n}_{+})},
\end{equation}
since $\alpha, \beta\geq0$. As a consequence, we know that $\left\{f_{j}\right\}_{j}$ is a positive radially non-increasing sequence. Next we take any $f\in L^{p}(\partial\mathbb{R}^{n}_{+})$ and set $f^{\kappa}_{j}=\kappa^{-\frac{n-1}{p}}f(\frac{y}{\kappa})$ with $\kappa>0$. It's suffice to find that
\begin{equation}\nonumber
	\|f^{\kappa}_{j}\|_{L^{p}(\partial\mathbb{R}^{n}_{+})}=\|f_{j}\|_{L^{p}(\partial\mathbb{R}^{n}_{+})}\ \ \mbox{and}\ \  \lim_{j\rightarrow+\infty}\|V(f_{j}^{\kappa})\|_{L^{q}(\mathbb{R}^{n}_{+})}\leq \lim_{j\rightarrow+\infty}\|V(f^{\kappa}_{j})\|_{L^{q}(\mathbb{R}^{n}_{+})},
\end{equation}
which implies that $\left\{f_{j}^{\kappa}\right\}_{j}$ is still a maximizing sequence to problem \eqref{uu1}. Furthermore, we write
\begin{equation}\nonumber
e_{1} :=\left(1,0,...,0\right)\in \mathbb{R}^{n-1},\ \ A_{j} :=\sup_{\kappa>0}f_{j}^{\kappa}(e_{1})=\sup_{\kappa>0}\kappa^{-\frac{n-1}{p}}f_{j}\left(\frac{e_{1}}{\kappa}\right).
\end{equation}
By direct calculation, we get
\begin{equation}\label{niu1}
0\leq f_{j}(y)\leq A_{j}|y|^{-\frac{n-1}{p}}\ \ \mbox{and}\ \ \|f_{j}\|_{L^{p,\infty}(\partial\mathbb{R}^{n}_{+})}\leq w_{n-2}^{\frac{1}{p}}A_{j}.
\end{equation}
Moreover, according to the Marcinkiewicz interpolation \cite{RO,EM1} with \eqref{in3}, we can deduce the following identity
\begin{equation}\nonumber
\|V(f)\|_{L^{q}(\mathbb{R}^{n}_{+})}\leq C(\alpha,\beta,\mu,\lambda,p,q^{\prime},n)\|f\|_{L^{p,q}}(\partial\mathbb{R}^{n}_{+}).
\end{equation}
Thus, by means of the above inequality, we have
\begin{equation}
\begin{aligned}
\|V(f_{j})\|_{L^{q}(\mathbb{R}^{n}_{+})}&\leq C(\alpha,\beta,\mu,\lambda,p,q^{\prime},n)\|f_{j}\|_{L^{p,q}}(\partial\mathbb{R}^{n}_{+})\\
&\leq C(\alpha,\beta,\mu,\lambda,p,q^{\prime},n)\|f_{j}\|_{L^{p,\infty}}^{1-\frac{p}{q}}\|f_{j}\|_{L^{p}}^{\frac{p}{q}}\\
&\leq
C(\alpha,\beta,\mu,\lambda,p,q^{\prime},n)A_{j}^{1-\frac{p}{q}}
\end{aligned}
\end{equation}
which immediately indicates that $A_{j}\geq c_{0}$ for some positive constant $c_{0}$. 

On one hand, by choosing $\kappa_{j}>0$ satisfies $f_{j}^{\kappa_{j}}(e_{1})\geq c_{0}$. We replace $\left\{f_{j}\right\}_{j}$ with $\left\{f_{j}^{\kappa_{j}}\right\}_{j}$, and the latter still recorded as $\left\{f_{j}\right\}_{j}$, then for any $j$, we have $\left\{f_{j}\right\}_{j}\geq c_{0}$. For another things, for any $R>0$, it holds that 
\begin{equation}\nonumber
\begin{aligned}
\omega_{n-1}f_{j}^{p}(R)R^{n-1}&\leq\omega_{n-2}\int_{0}^{R}f_{j}^{p}(r)r^{n-2}dr\leq\omega_{n-2}\int_{0}^{\infty}f_{j}^{p}(r)r^{n-2}dr\\
&=\int_{\partial R^{n}_{+}}f_{j}^{p}(y)dy=1.
\end{aligned}
\end{equation}
Based on the above arguments, we get $$0\leq f_{j}(y)\leq\omega_{n-1}^{-\frac{1}{p}}|y|^{-\frac{n-1}{p}}.$$ 
According to Lieb's results based on the Helly theorem as in \cite{Lieb1}, we infer that there exists a positive, radially non-increasing function $f$ such that
\begin{equation}\nonumber
	f_{j}\rightarrow f,\ \  \mbox{a.e.}\ \ \partial\mathbb{R}^{n}_{+}.
\end{equation}
It's clear that for $|y|\leq 1$ and $\|f\|_{L^{p}(\partial\mathbb{R}_{+}^{n})}\leq1$, we have $f(y)\geq c_{0}$. Furthermore, according to the Brezis-Lieb Lemma \cite{BLT}, it holds that
\begin{equation}\label{zh1}
\begin{aligned}
\lim_{j\rightarrow+\infty}\|f_{j}-f\|^{p}_{L^{p}(\partial\mathbb{R}^{n}_{+})}&=\lim_{j\rightarrow+\infty}\|f_{j}\|^{p}_{L^{p}(\partial\mathbb{R}^{n}_{+})}-\|f\|^{p}_{L^{p}(\partial\mathbb{R}^{n}_{+})}\\
&=1-\|f\|^{p}_{L^{p}(\partial\mathbb{R}^{n}_{+})}.
\end{aligned}
\end{equation}
for some constant $C>0$, we have from \eqref{niu1},
\begin{equation}
V(f_{j})(x)\leq C |x|^{-\beta}\int_{\partial\mathbb{R}^{n}_{+}}\frac{x_{n}^{\lambda}}{|y|^{\alpha}\left(|x^{\prime}-y|+x_{n}\right)^{\frac{\mu}{2}}}\frac{1}{|y|^{-\frac{n-1}{p}}}dy.
\end{equation}
In view of the assumptions of Theorem \ref{thm2}, it's easy to find that the above inequality is finite. Therefore, according to the dominated convergence theorem, for $x\in \mathbb{R}^{n}_{+}$, we deduce that  $\lim\limits_{j\rightarrow+\infty}V(f_{j})(x)=V(f)(x)$. 

By virtue of the Brezis-Lieb Lemma, we obtain
\begin{equation}\label{zh2}
\begin{aligned}
\lim_{j\rightarrow+\infty}\|V(f_{j})\|^{q}_{L^{q}(\mathbb{R}^{n}_{+})}&=\|V(f)\|^{q}_{L^{q}(\mathbb{R}^{n}_{+})}+\lim_{j\rightarrow+\infty}\|V(f_{j})-V(f)\|^{q}_{L^{q}(\mathbb{R}^{n}_{+})}\\
&\leq C(\alpha,\beta,\mu,\lambda,p,q^{\prime},n)^{q}\|f\|^{q}_{L^{p}(\partial\mathbb{R}^{n}_{+})}+C(\alpha,\beta,\mu,\lambda,p,q^{\prime},n)^{q}\lim_{j\rightarrow+\infty}\|f_{j}-f\|^{q}_{L^{p}(\partial\mathbb{R}^{n}_{+})}.
\end{aligned}
\end{equation}
Combining \eqref{zh1} with \eqref{zh2}, lt holds that
\begin{equation}\nonumber
1\leq \|f\|^{q}_{L^{p}(\partial\mathbb{R}^{n}_{+})}+\left(1-\|f\|^{p}_{L^{p}(\partial\mathbb{R}^{n}_{+})}\right)^{\frac{q}{p}}.
\end{equation}
Since $p<q$ and $f\neq0$, we deduce that $\|f\|_{L^{p}(\partial\mathbb{R}^{n}_{+})}=1$. With all the analysis above, it's clear that $f$ is a maximizer to the problem \eqref{uu1}. this proof is completed.  
$\hfill{} \Box$

\section{Qualitative analysis of the positive solutions}
In this section, we study the qualitative properties of positive solutions to the integral equations \eqref{pm1}. More precisely, we are ready to obtain the regularity, asymptotic behaviors and symmetry.
First of all, we introduce some basic definitions. Suppose that $V$ is a topological vector space, and we define two fundamental norms $\|\cdotp\|_{X}$ and $\|\cdotp\|_{Y}$ on $V$,
\begin{equation}\nonumber
\|\cdotp\|_{X}, \|\cdotp\|_{Y}: V\rightarrow [0,\infty].
\end{equation}
Let
\begin{equation}\nonumber
X:=\{v\in V:\|v\|_{X}<\infty\}, Y:=\{v\in V:\|v\|_{Y}<\infty\}. 
\end{equation}
We recall that the operator $T:X\rightarrow Y$
\begin{itemize}
\item is named to be contracting if for any $f,h\in X$, there exists some constant $\varrho\in (0,1)$ such that
\begin{equation}\nonumber
\|T(f)-T(h)\|_{X}\leq \varrho\|f-h\|_{Y}.
\end{equation}
\item is named to be shrinking if for any $h\in X$, there exists some constant $\delta\in(0,1)$ such that
\begin{equation}\nonumber
	\|T(h)\|_{X}\leq \delta\|h\|_{Y}.
\end{equation}
\end{itemize}
Clearly, it is not difficult to show that, a linear shrinking operator must be contracting.
 
Next, we recall the classical regularity lifting lemma (See \cite{CL2}), which will plays an essential role in our discussion.
\begin{lem}\cite{CL2}\label{ABC1}
Let $T$ be a contraction map from $X\rightarrow X$ and $Y\rightarrow Y$, $f\in X$ and there exists a function $g\in X\cap Y$ such that $f=Tf+g$ in $X$. Then $f\in X\cap Y$.
\end{lem}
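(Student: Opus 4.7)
\textbf{Proof proposal for Lemma \ref{ABC1}.} The plan is to iterate the equation $f = Tf + g$ starting from $g \in X \cap Y$, use the contraction property on each norm separately, and then reconcile the two limits in the ambient space $V$. Concretely, I set $h_0 := g$ and define recursively $h_{k+1} := Th_k + g$ for $k \geq 0$. Since $T$ maps $X$ into $X$ and $Y$ into $Y$, and since $g \in X \cap Y$, every iterate $h_k$ lies in $X \cap Y$. So the construction produces a common sequence that can be measured in either norm.

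Next I would run two parallel Cauchy arguments. On one hand, because $T$ is a contraction on $X$ with constant $\varrho_X \in (0,1)$ and $f = Tf + g$ in $X$, an induction gives $\|h_k - f\|_X \leq \varrho_X^k \|g - f\|_X$, so $h_k \to f$ in the $X$-norm. On the other hand, the map $S : Y \to Y$, $Sh := Th + g$, is a contraction on $Y$ by the contraction hypothesis for $T: Y \to Y$, so Banach's fixed point theorem yields a unique $h^{*} \in Y$ with $h^{*} = Th^{*} + g$ in $Y$, and $\|h_k - h^{*}\|_Y \leq \varrho_Y^k \|g - h^{*}\|_Y \to 0$. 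Thus $\{h_k\}$ converges to $f$ in $X$ and to $h^{*}$ in $Y$.

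To finish I need to identify $f$ and $h^{*}$ as the same element of $V$. This is where the regularity gain occurs: both norms are defined on the same ambient space $V$, so extracting a subsequence that converges pointwise (a.e.) in the natural common realization of $X$ and $Y$ (for instance, when $X$ and $Y$ are Lebesgue-type spaces on the same measure space, as is the case in the applications of Theorem~\ref{re1}) forces $f = h^{*}$ a.e., hence in $V$. Consequently $f \in Y$, and combined with $f \in X$ this gives $f \in X \cap Y$, as required.

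The main obstacle in turning this sketch into a rigorous proof is precisely the last step: the two norms $\|\cdot\|_X$ and $\|\cdot\|_Y$ a priori induce different modes of convergence, and reconciling the two limits cannot be avoided at the purely abstract level. In the setup of the present paper this is benign because $X$ and $Y$ will be $L^r$-type spaces on $\partial \mathbb{R}^n_+$ (or $\mathbb{R}^n_+$), where a.e. subsequential convergence handles the identification. In an abstract statement one would either postulate that $V$ is equipped with a Hausdorff topology weaker than both norms, or formulate the lemma for $X \cap Y$ as a Banach space in its own right. I would state the a.e.\ identification explicitly in the write-up and refer to \cite{CL2} for the general formulation.
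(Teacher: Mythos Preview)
The paper does not supply its own proof of this lemma; it is quoted from \cite{CL2} and invoked as a black box in the proof of Theorem~\ref{re1}. Your iteration argument is the standard proof of this regularity-lifting principle and is essentially correct, including your honest flag that the identification $f = h^{*}$ requires some ambient Hausdorff topology on $V$ (which, as you note, is harmless in the $L^r$ setting actually used). Since there is nothing in the paper to compare against, your write-up would serve as a self-contained justification where the paper simply cites the literature.
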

\textbf{Proof of Theorem \ref{re1}.} For any constant $A>0$, we define
\begin{equation}\nonumber
\begin{aligned}
	&u_{A}(y)=
\begin{cases}
	u(y),\ \ |u(y)|>A~\mbox{or}~|y|>A,\\
	0,~~~~~~\mbox{otherwise},
\end{cases}
&v_{A}(x)=
\begin{cases}
	v(x),\ \ |v(x)|>A~\mbox{or}~|x|>A,\\
	0,~~~~~~\mbox{otherwise},
\end{cases}
\end{aligned}
\end{equation}
$u_{B}(y)=u(y)-u_{A}(y)$ and $v_{B}(x)=v(x)-v_{A}(x)$. Define the linear operator $T_{1}$ as
\begin{equation}\nonumber
T_{1}(h)(y)=\int_{\mathbb{R}^{n}_{+}}|x|^{-\beta}P_{\lambda}(x,y,\mu)v_{A}^{q_{0}-1}(x)h(x)|y|^{-\alpha}dx,\ \ y\in \partial\mathbb{R}^{n}_{+},
\end{equation}
and
\begin{equation}\nonumber
T_{2}(h)(x)=\int_{\partial\mathbb{R}^{n}_{+}}|y|^{-\alpha}P_{\lambda}(x,y,\mu)u_{A}^{p_{0}-1}h(x)|x|^{-\beta}dy,\ \ x\in \mathbb{R}^{n}_{+}.
\end{equation}
Noticing that $\left(u, v\right)\in L^{p_{0}+1}\left(\partial \mathbb{R}^{n}_{+}\right)\times L^{q_{0}+1}\left(\mathbb{R}^{n}_{+}\right)$ is a pair of non-negative solutions of the integral system \eqref{pm1}, we have
\begin{equation}\nonumber
\begin{aligned}
u(y)&=\int_{\mathbb{R}^{n}_{+}}|y|^{-\alpha}P_{\lambda}(x,y,\mu)v^{q_{0}}(x)|x|^{-\alpha}dx\\
&=\int_{\mathbb{R}^{n}_{+}}|y|^{-\alpha}P_{\lambda}(x,y,\mu)\left(v_{A}(x)+v_{B}(x)\right)^{q_{0}-1}v(x)|x|^{-\alpha}dx\\
&=\int_{\mathbb{R}^{n}_{+}}|y|^{-\alpha}P_{\lambda}(x,y,\mu)v_{A}^{q_{0}-1}(x)v(x)|x|^{-\alpha}dx+\int_{\mathbb{R}^{n}_{+}}|y|^{-\alpha}P_{\lambda}(x,y,\mu)v_{B}^{q_{0}}(x)|x|^{-\alpha}dx\\
&:=T_{1}(v)(y)+F(y).
\end{aligned}
\end{equation}
Similarly,
\begin{equation}\nonumber
	\begin{aligned}
v(x)&=\int_{\partial\mathbb{R}^{n}_{+}}|y|^{-\alpha}P_{\lambda}(x,y,\mu)u^{p_{0}}(y)|x|^{-\beta}dy\\
&=\int_{\partial\mathbb{R}^{n}_{+}}|y|^{-\alpha}P_{\lambda}(x,y,\mu)\left(u_{A}(y)+u_{B}(y)\right)^{p_{0}-1}u(y)|x|^{-\beta}dy\\
&=\int_{\partial\mathbb{R}^{n}_{+}}|y|^{-\alpha}P_{\lambda}(x,y,\mu)u_{A}^{p_{0}-1}(y)u(y)|x|^{-\beta}dy+\int_{\partial\mathbb{R}^{n}_{+}}|y|^{-\alpha}P_{\lambda}(x,y,\mu)u_{A}(y)+u_{B}^{p_{0}}(y)|x|^{-\beta}dy\\
&:=T_{2}(u)(x)+G(x),
	\end{aligned}
\end{equation}
where
\begin{equation}\nonumber
F(y)=\int_{\mathbb{R}^{n}_{+}}|y|^{-\alpha}P_{\lambda}(x,y,\mu)v_{B}^{q_{0}}(x)|x|^{-\alpha}dx,\ \ G(x)=\int_{\partial\mathbb{R}^{n}_{+}}|y|^{-\alpha}P_{\lambda}(x,y,\mu)u_{A}(y)+u_{B}^{p_{0}}(y)|x|^{-\beta}dy.
\end{equation}
Furthermore, we may define the operator $T:L^{r}(\partial\mathbb{R}^{n}_{+})\times L^{s}(\mathbb{R}^{n}_{+})$,
\begin{equation}\nonumber
T(h_{1},h_{2})=\left(T_{1}(h_{2}), T_{2}(h_{1})\right)
\end{equation}  
with the norm $\|\left(h_{1},h_{2}\right)\|_{r,s}=\|h_{1}\|_{L^{r}(\partial\mathbb{R}^{n}_{+})}+\|h_{2}\|_{L^{s}(\mathbb{R}^{n}_{+})}$. Obviously, it holds that
\begin{equation}\nonumber
\left(u,v\right)=T(u,v)+(F,G).
\end{equation}

To take full advantage of the regularity lifting argument via contracting map, we set the parameters $r$ and $s$ satisfies 
\begin{equation}\nonumber
\frac{1}{s}+\frac{n-1}{n}\frac{1}{p_{0}+1}=\frac{1}{q_{0}+1}+\frac{n-1}{n}\frac{1}{r}.
\end{equation}
Noticing that under the hypothesis of Theorem \ref{re1}, the existence of parameters $r$ and $s$ can be ensure. With the purpose of obtain the desired result that $\left(u, v\right)\in L^{r}\left(\partial \mathbb{R}^{n}_{+}\right)\times L^{s}\left(\mathbb{R}^{n}_{+}\right)$, we only require to prove that, for $A$ sufficiently large, the following three cases hold.
\begin{enumerate}
\item $T$ is shrinking from $L^{p_{0}+1}\left(\partial \mathbb{R}^{n}_{+}\right)\times L^{q_{0}+1}\left(\mathbb{R}^{n}_{+}\right)$ to $L^{p_{0}+1}\left(\partial \mathbb{R}^{n}_{+}\right)\times L^{q_{0}+1}\left(\mathbb{R}^{n}_{+}\right)$.
\item $T$ is shrinking from $L^{r}\left(\partial \mathbb{R}^{n}_{+}\right)\times L^{s}\left(\mathbb{R}^{n}_{+}\right)$ to $L^{r}\left(\partial \mathbb{R}^{n}_{+}\right)\times L^{s}\left(\mathbb{R}^{n}_{+}\right)$.
\item $\left(F,G\right)\in L^{p_{0}+1}\left(\partial \mathbb{R}^{n}_{+}\right)\times L^{q_{0}+1}\left(\mathbb{R}^{n}_{+}\right)\cap L^{r}\left(\partial \mathbb{R}^{n}_{+}\right)\times L^{s}\left(\mathbb{R}^{n}_{+}\right)$.
\end{enumerate}

Based on the above preparation, we now begin the proof of the case (1). In fact, by applying the weighted integral inequality \eqref{in3} and the H\"{o}lder inequality, for $(h_{1},h_{2})\in L^{p_{0}+1}\left(\partial \mathbb{R}^{n}_{+}\right)\times L^{q_{0}+1}\left(\mathbb{R}^{n}_{+}\right)$, we know
\begin{equation}\nonumber
\begin{aligned}
\|T_{1}(h_{2})\|_{L^{p_{0}+1}\left(\partial \mathbb{R}^{n}_{+}\right)}&\leq C_{1}\|v_{A}^{q_{0}-1}\|_{L^{\frac{q_{0}+1}{q_{0}-1}}\left(\mathbb{R}^{n}_{+}\right)}\|h_{2}\|_{L^{q_{0}+1}\left(\mathbb{R}^{n}_{+}\right)}\\
&\leq C_{1}\|v_{A}\|^{q_{0}-1}_{L^{q_{0}+1}\left(\mathbb{R}^{n}_{+}\right)}\|h_{2}\|_{L^{q_{0}+1}\left(\mathbb{R}^{n}_{+}\right)},
\end{aligned}
\end{equation}
and
\begin{equation}\nonumber
	\begin{aligned}
\|T_{2}(h_{1})\|_{L^{q_{0}+1}\left(\mathbb{R}^{n}_{+}\right)}&\leq C_{2} \|u_{A}^{p_{0}-1}\|_{L^{\frac{p_{0}+1}{p_{0}-1}}\left(\partial\mathbb{R}^{n}_{+}\right)}\|h_{1}\|_{L^{p_{0}+1}\left(\partial\mathbb{R}^{n}_{+}\right)}\\
&\leq C_{2}\|u_{A}\|^{p_{0}-1}_{L^{p_{0}+1}\left(\partial\mathbb{R}^{n}_{+}\right)}\|h_{1}\|_{L^{p_{0}+1}\left(\partial\mathbb{R}^{n}_{+}\right)}
	\end{aligned}
\end{equation}
where constant $C_{1}, C_{2}>0$.

Notice that since the integrability $L^{p_{0}+1}\left(\partial \mathbb{R}^{n}_{+}\right)\times L^{q_{0}+1}\left(\mathbb{R}^{n}_{+}\right)$, for $A$ sufficiently large, we deduce
\begin{equation}\nonumber
\|T\left(h_{1},h_{2}\right)\|_{p_{0}+1,q_{0}+1}=\|T_{1}(h_{2})\|_{p_{0}+1}+\|T_{2}(h_{1})\|_{q_{0}+1}\leq\frac{1}{2}\|\left(h_{1},h_{2}\right)\|_{p_{0}+1,q_{0}+1}
\end{equation}
which immediately implies that $T$ is a shrinking operator from $L^{p_{0}+1}\left(\partial \mathbb{R}^{n}_{+}\right)\times L^{q_{0}+1}\left(\mathbb{R}^{n}_{+}\right)$ to itself.

Next, we will make full use of Stein-Weiss type inequality \eqref{in2} with a general kernel to prove the case (2), which is similar to what we used in the case (1). For convenience, we only verify that $\|T_{2}(h_{1})\|_{ L^{s}\left(\mathbb{R}^{n}_{+}\right)}\leq\frac{1}{2}\|h_{1}\|_{L^{r}\left(\partial\mathbb{R}^{n}_{+}\right)}$, since the other situation $\|T_{1}(h_{2})\|_{L^{r}\left(\partial \mathbb{R}^{n}_{+}\right)}\leq\frac{1}{2}\|h_{2}\|_{ L^{s}\left(\mathbb{R}^{n}_{+}\right)}$ can be proved in the same way.

Actually, there exists positive constant $C$,
\begin{equation}\label{da1}
\begin{aligned}
	\|T_{2}(h_{1})\|_{ L^{s}\left(\mathbb{R}^{n}_{+}\right)}&\leq C\|u_{A}^{p_{0}-1}h_{1}\|_{L^{z}(\partial\mathbb{R}^{n}_{+})}\\
	&\leq C\|u_{A}\|^{p_{0}-1}_{L^{p_{0}+1}\left(\partial \mathbb{R}^{n}_{+}\right)}\|h_{1}\|_{L^{r}\left(\partial \mathbb{R}^{n}_{+}\right)}
\end{aligned}
\end{equation}
In view of $u\in L^{p_{0}+1}\left(\partial \mathbb{R}^{n}_{+}\right)$, by selecting $A$ is sufficiently large in \eqref{da1}, then we have
\begin{equation}\nonumber
\|T_{2}(h_{1})\|_{ L^{s}\left(\mathbb{R}^{n}_{+}\right)}\leq\frac{1}{2}\|h_{1}\|_{L^{r}\left(\partial\mathbb{R}^{n}_{+}\right)},\quad \forall h_{1}\in L^{r}\left(\partial \mathbb{R}^{n}_{+}\right).
\end{equation}
In conclusion, we require the parameters $r$, $s$ and $z$ satisfies
\begin{equation}\nonumber
\frac{1}{z}-\frac{1}{r}=\frac{p_{0}-1}{p_{0}+1},
\end{equation}
and
\begin{equation}\nonumber
\begin{aligned}
\frac{1}{s}&=\frac{\alpha+\beta+\mu-\lambda-n+1}{n}+\frac{n-1}{n}\frac{1}{z}\\
&=\frac{\alpha+\beta+\mu-\lambda-n+1}{n}+\frac{n-1}{n}\left(\frac{p_{0}-1}{p_{0}+1}+\frac{1}{r}\right)\\
&=\frac{n-1}{n}\frac{1}{p_{0}+1}+\frac{1}{q_{0}+1}-\frac{n-1}{n}+\frac{n-1}{n}\left(\frac{p_{0}-1}{p_{0}+1}+\frac{1}{r}\right)\\
&=\frac{n-1}{n}\frac{1}{r}+\frac{1}{q_{0}+1}-\frac{n-1}{n}\frac{1}{p_{0}+1},
\end{aligned}
\end{equation}
where we have used the equation's condition
\begin{equation}\nonumber
\frac{n-1}{n}\frac{1}{p_{0}+1}+\frac{1}{q_{0}+1}=\frac{\alpha+\beta+\mu-\lambda}{n}.
\end{equation}
Moreover, it is suffices to show that $\alpha\leq\frac{n-1}{z^{\prime}}$ and $\beta\leq\frac{n+s}{s}$. 

Based on the above analysis, we conclude that $\|T(h_{1},h_{2})\|_{r,s}\leq\frac{1}{2}\|h_{1},h_{2}\|_{r,s}$, which indicates that $T$ is a shrinking operator from $L^{r}\left(\partial \mathbb{R}^{n}_{+}\right)\times L^{s}\left(\mathbb{R}^{n}_{+}\right)$ to itself.

Finally, we prove the case (3), that is $\left(F,G\right)\in L^{p_{0}+1}\left(\partial \mathbb{R}^{n}_{+}\right)\times L^{q_{0}+1}\left(\mathbb{R}^{n}_{+}\right)\cap L^{r}\left(\partial \mathbb{R}^{n}_{+}\right)\times L^{s}\left(\mathbb{R}^{n}_{+}\right)$. It should be noted that, $u_{B}$ and $v_{B}$ are uniformly bounded by $A$. then we know that this case holds. Thus the proof of Theorem \ref{re1} is completed by using the regularity lifting Lemma \ref{ABC1}.
$\hfill{} \Box$

\subsection{Asymptotic Estimates}
In this subsection, we are going to explore the asymptotic behaviors of the non-negative solutions of the integral system \eqref{pm1}.

\textbf{Proof of Theorem \ref{re2}.} In order to prove that 
\begin{equation}\label{le1}
\lim\limits_{|y|\rightarrow0}u(y)|y|^{\alpha}=\int_{\mathbb{R}^{n}_{+}}\frac{x_{n}^{\lambda}v^{q_{0}}(x)}{|x|^{\mu+\beta}}dx.
\end{equation}
First, we verify that
 $\int_{\mathbb{R}^{n}_{+}}\frac{v^{q_{0}}(x)}{|x|^{\mu+\beta-\lambda}}dx<+\infty$.
In fact, for any $R>0$, we observe that
\begin{equation}\nonumber
\int_{\mathbb{R}^{n}_{+}}\frac{x_{n}^{\lambda}v^{q_{0}}(x)}{|x|^{\mu+\beta}}dx=\int_{B^{+}_{R}}\frac{x_{n}^{\lambda}v^{q_{0}}(x)}{|x|^{\mu+\beta}}dx+\int_{\mathbb{R}^{n}_{+}\backslash B^{+}_{R}}\frac{x_{n}^{\lambda}v^{q_{0}}(x)}{|x|^{\mu+\beta}}dx.
\end{equation}

On one hand, since $u\in L^{p_{0}+1}\left(\partial \mathbb{R}^{n}_{+}\right)$, then there exists $y_{0}\in \partial \mathbb{R}^{n}_{+}$ satisfies $|y_{0}|<\frac{R}{2}$ such that $u(y_{0})<+\infty$. From the system \eqref{pm1},
\begin{equation}\nonumber
\begin{aligned}
\int_{\mathbb{R}^{n}_{+}\backslash B^{+}_{R}}\frac{x_{n}^{\lambda}v^{q_{0}}(x)}{|x|^{\mu+\beta}}dx&\leq C\int_{\mathbb{R}^{n}_{+}\backslash B^{+}_{R}}P_{\lambda}(x,y,\mu)v^{q_{0}}(x)|x|^{-\beta}dx+\int_{B^{+}_{R}}P_{\lambda}(x,y,\mu)v^{q_{0}}(x)|x|^{-\beta}dx\\
&\leq C\int_{\mathbb{R}^{n}_{+}}P_{\lambda}(x,y,\mu)v^{q_{0}}(x)|x|^{-\beta}dx=|y_{0}|^{\alpha}u(y_{0})<+\infty,
\end{aligned}
\end{equation}
since $|x-y|<|x|$.

On the other hand, by using the H\"{o}lder inequality, we get
\begin{equation}\nonumber
\int_{B^{+}_{R}}\frac{x_{n}^{\lambda}v^{q_{0}}(x)}{|x|^{\mu+\beta}}dx\leq C\left(\int_{B^{+}_{R}}\left(\frac{1}{|x|^{\mu+\beta-\lambda}}\right)^{t^{\prime}}dx\right)^{\frac{1}{t^{\prime}}}\left(\int_{B^{+}_{R}}v^{q_{0}t}(x)dx\right)^{\frac{1}{t}}.
\end{equation}
With the aim of $\int_{\mathbb{R}^{n}_{+}}\frac{v^{q_{0}}(x)}{|x|^{\mu+\beta-\lambda}}dx<+\infty$, now we require that $\left(\mu+\beta-\lambda\right)t^{\prime}<n$ and
\begin{equation}\nonumber
\frac{1}{q_{0}t}\in\left(\frac{\beta-1}{n},\frac{\beta+\mu-\lambda}{n}\right)\cap\left(\frac{1}{q_{0}+1}-\frac{n-1}{n}\frac{1}{p_{0}+1}+\frac{\alpha}{n},\frac{1}{q_{0}+1}-\frac{n-1}{n}\frac{1}{p_{0}+1}+\frac{\alpha+\mu-\lambda+1}{n-1}\right).
\end{equation}

Noting that $\left(\mu+\beta-\lambda\right)t^{\prime}<n$, we have
\begin{equation}\nonumber
\frac{\mu+\beta-\lambda}{q_{0}n}<\frac{1}{q_{0}}-\frac{1}{q_{0}t}.
\end{equation}
Therefore, since $q_{0}>1$ and $\frac{1}{p_{0}+1}>\frac{\alpha}{n-1}$, it is easy to find that
\begin{equation}\nonumber
\begin{aligned}
\frac{1}{q_{0}}-\frac{\mu+\beta-\lambda}{q_{0}n}&=\frac{1}{q_{0}}-\frac{1}{q_{0}}\left(\frac{n-1}{n}\frac{1}{p_{0}+1}+\frac{1}{q_{0}+1}-\frac{\alpha}{n}\right)\\
&=\frac{1}{q_{0}+1}-\frac{1}{q_{0}}\left(\frac{n-1}{n}\frac{1}{p_{0}+1}-\frac{\alpha}{n}\right)\\
&>\frac{1}{q_{0}+1}-\frac{n-1}{n}\frac{1}{p_{0}+1}+\frac{\alpha}{n},
\end{aligned}
\end{equation}
here we applied the condition
\begin{equation}\nonumber
\frac{n-1}{n}\frac{1}{p_{0}+1}+\frac{1}{q_{0}+1}=\frac{\alpha+\beta+\mu-\lambda}{n}.
\end{equation}
With the above analysis, we can select a suitable parameter $t$ such that $\left(\mu+\beta-\lambda\right)t^{\prime}<n$ and $\|v\|_{L^{q_{0}t}(\mathbb{R}^{n}_{+})}<+\infty$, which immediately means that $\int_{\mathbb{R}^{n}_{+}}\frac{v^{q_{0}}(x)}{|x|^{\mu+\beta-\lambda}}dx<+\infty$. 

Next, we prove \eqref{le1}. Direct calculation yields that
\begin{equation}
\begin{aligned}
&\left|\int_{\mathbb{R}^{n}_{+}}P_{\lambda}(x,y,\mu)v^{q_{0}}(x)|x|^{-\beta}dx-\int_{\mathbb{R}^{n}_{+}}\frac{x_{n}^{\lambda}v^{q_{0}}(x)}{|x|^{\mu+\beta}}dx\right|\\
&\leq\left|\int_{B^{+}_{\rho}}\left(P_{\lambda}(x,y,\mu)v^{q_{0}}(x)|x|^{-\beta}-\frac{x_{n}^{\lambda}v^{q_{0}}(x)}{|x|^{\mu+\beta}}\right)dx\right|+\left|\int_{\mathbb{R}^{n}_{+}\backslash B^{+}_{\rho}}\left(P_{\lambda}(x,y,\mu)v^{q_{0}}(x)|x|^{-\beta}-\frac{x_{n}^{\lambda}v^{q_{0}}(x)}{|x|^{\mu+\beta}}\right)dx\right|\\
&:=A_{1}+A_{2}.
\end{aligned}
\end{equation}

On one hand, for $A_{1}$, we have
\begin{equation}\label{mi1}
	\begin{aligned}
\int_{B^{+}_{\rho}}P_{\lambda}(x,y,\mu)v^{q_{0}}(x)|x|^{-\beta}dx&\leq \int_{B^{+}_{\rho}(y)}\frac{v^{q_{0}}(x)}{|x-y|^{\mu+\beta-\lambda}}dx+\int_{B^{+}_{\rho}}\frac{v^{q_{0}}(x)}{|x-y|^{\mu+\beta-\lambda}}dx\\
&\leq2\|v^{q_{0}}\|_{L^{t}(\mathbb{R}^{n}_{+})}\|
\frac{1}{|x|^{\mu+\beta-\lambda}}\|_{L^{t^{\prime}}(B^{+}_{\rho})}.
	\end{aligned}                
\end{equation}
Taking the limit in \eqref{mi1}, which leads to $\lim\limits_{\rho\rightarrow0}\lim\limits_{|y|\rightarrow0} A_{1}=0$.

On the other hand, according to the Lebesgue dominated convergence theorem for $A_{2}$, we get
\begin{equation}\label{mi2}
\lim\limits_{|y|\rightarrow0}\int_{\mathbb{R}^{n}_{+}\backslash B^{+}_{\rho}}\left(P_{\lambda}(x,y,\mu)v^{q_{0}}(x)|x|^{-\beta}-\frac{x_{n}^{\lambda}v^{q_{0}}(x)}{|x|^{\mu+\beta}}\right)dx=0.
\end{equation}
With the help of \eqref{mi1} and \eqref{mi2}, we derive
\begin{equation}\nonumber
\begin{aligned}
\lim\limits_{|y|\rightarrow0}&\left|\int_{\mathbb{R}^{n}_{+}}P_{\lambda}(x,y,\mu)v^{q_{0}}(x)|x|^{-\beta}dx-\int_{\mathbb{R}^{n}_{+}}\left(\frac{x_{n}^{\lambda}v^{q_{0}}(x)}{|x|^{\mu+\beta}}\right)dx\right|\\
&=\lim\limits_{\rho\rightarrow0}\lim\limits_{|y|\rightarrow0}A_{1}+\lim\limits_{\rho\rightarrow0}\lim\limits_{|y|\rightarrow0}A_{2}=0.
\end{aligned}
\end{equation}
For the other case, if $\frac{1}{p_{0}}-\frac{\mu+\alpha-\lambda+1}{p_{0}(n-1)}>\frac{\alpha}{n-1}$, then
\begin{equation}\nonumber
	\lim\limits_{|x|\rightarrow0}\frac{v(x)|x|^{\beta}}{x_{n}^{\lambda}}=\int_{\partial\mathbb{R}^{n}_{+}}\frac{u^{p_{0}}(y)}{|y|^{\alpha+\mu}}dy.
\end{equation}

Similarly, we can prove the second conclusion. In conclusion, the proof of Theorem \ref{re2} is completed.
$\hfill{} \Box$

\subsection{Symmetry via the moving plane argument}
In this  subsection, we will prove the symmetry of positive solutions under the integral conditions. In order to apply the moving plane argument, we give some basic notations. For any $\tau\in\mathbb{R}$, one write
\begin{equation}\nonumber
	y^{\tau}= \left(2\tau-y_{1},...,y_{n-1}\right),\ \ x^{\tau}=\left(2\tau-x_{1},...,x_{n}\right),\ \
	u(y^{\tau})=u_{\tau}(y),\ \ v(x^{\tau})=v_{\tau}(x),
\end{equation}
and 
\begin{equation}\nonumber
	T_{\tau}=\left\lbrace x\in\mathbb{R}^{n}: x_{1}=\tau\right\rbrace,\ \ \Sigma_{y,\tau}=\left\lbrace y\in\partial\mathbb{R}^{n}_{+}: y_{1}<\tau\right\rbrace,\ \ \ \ \Sigma_{x,\tau}=\left\lbrace x\in\mathbb{R}^{n}_{+}: x_{1}<\tau\right\rbrace.
\end{equation}	

Next, we will show the following equalities, which are useful in the method of moving plane.
\begin{lem}
Supposed that $\left(u,v\right)$ be a pair of non-negative solution of the integral system \eqref{pm1}, for any $y\in\partial\mathbb{R}^{n}_{+}$ and $x\in\mathbb{R}^{n}_{+}$, it holds that
	\begin{equation}\label{gg1}
		\begin{aligned}
			u(y)-u_{\tau}(y)&=\int_{\Sigma_{x,\tau}}P_{\lambda}(x,y,\mu)\left(|y|^{-\alpha}v^{q_{0}}(x)|x|^{-\beta}-|y^{\tau}|^{-\alpha}v_{\tau}^{q_{0}}(x)|x^{\tau}|^{-\beta}\right)dx\\ &+\int_{\Sigma_{x,\tau}}P_{\lambda}(x^{\tau},y,\mu)\left(|y|^{-\alpha}v_{\tau}^{q_{0}}(x)|x|^{-\beta}-|y^{\tau}|^{-\alpha}v^{q_{0}}(x)|x|^{-\beta}\right)dx,
		\end{aligned}
	\end{equation}
	and
	\begin{equation}\label{gg2}
		\begin{aligned}
			v(x)-v_{\tau}(x)&=\int_{\Sigma_{y,\tau}}P_{\lambda}(x,y,\mu)\left(|y|^{-\alpha}u^{p_{0}}(y)|x|^{-\beta}-|y^{\tau}|^{-\alpha}u_{\tau}^{p_{0}}(y)|x^{\tau}|^{-\beta}\right)dy\\ &+\int_{\Sigma_{y,\tau}}P_{\lambda}(x^{\tau},y,\mu)\left(|y|^{-\alpha}u_{\tau}^{p_{0}}(y)|x|^{-\beta}-|y^{\tau}|^{-\alpha}u^{p_{0}}(y)|x|^{-\beta}\right)dy,
		\end{aligned}
	\end{equation}
\end{lem}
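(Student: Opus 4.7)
The identities \eqref{gg1} and \eqref{gg2} are obtained by a standard splitting–reflection computation directly from the integral representation in \eqref{pm1}, so no analytic estimate is required. My plan is to factor the prefactors $|y|^{-\alpha}$ and $|x|^{-\beta}$ out of the inner integral, split the domain at the moving hyperplane $T_\tau$, reflect the half that lies beyond $T_\tau$, and then subtract. The key algebraic observation that makes the whole computation go through is the pair of kernel symmetries
\begin{equation}\nonumber
P_{\lambda}(x^\tau,y^\tau,\mu)=P_{\lambda}(x,y,\mu),\qquad P_{\lambda}(x,y^\tau,\mu)=P_{\lambda}(x^\tau,y,\mu),
\end{equation}
which follow because the reflection $z\mapsto z^\tau$ across $\{z_1=\tau\}$ leaves the last coordinate $x_n$ (and hence $x_n^\lambda$) untouched and preserves the distance $|x'-y|$.

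The concrete steps are as follows. First, starting from
\begin{equation}\nonumber
u(y)=|y|^{-\alpha}\int_{\mathbb{R}^{n}_{+}}P_{\lambda}(x,y,\mu)\,v^{q_{0}}(x)\,|x|^{-\beta}dx,
\end{equation}
I decompose $\mathbb{R}^{n}_{+}=\Sigma_{x,\tau}\cup(\mathbb{R}^{n}_{+}\setminus\Sigma_{x,\tau})$, change variables $x\mapsto x^\tau$ on the complementary piece (the Jacobian is $1$ and $\Sigma_{x,\tau}^{\,c}\cap\mathbb{R}^n_+$ is mapped bijectively onto $\Sigma_{x,\tau}$), and use $v(x^\tau)=v_\tau(x)$ to get
\begin{equation}\nonumber
u(y)=|y|^{-\alpha}\int_{\Sigma_{x,\tau}}\Bigl[P_{\lambda}(x,y,\mu)v^{q_0}(x)|x|^{-\beta}+P_{\lambda}(x^\tau,y,\mu)v^{q_0}_\tau(x)|x^\tau|^{-\beta}\Bigr]dx.
\end{equation}
Second, writing $u_\tau(y)=u(y^\tau)$ and applying the same splitting–reflection to $\int_{\mathbb{R}^n_+}P_\lambda(x,y^\tau,\mu)v^{q_0}(x)|x|^{-\beta}dx$, the two kernel symmetries above interchange the roles of $v^{q_0}$ and $v^{q_0}_\tau$ and give
\begin{equation}\nonumber
u_\tau(y)=|y^\tau|^{-\alpha}\int_{\Sigma_{x,\tau}}\Bigl[P_{\lambda}(x^\tau,y,\mu)v^{q_0}(x)|x|^{-\beta}+P_{\lambda}(x,y,\mu)v^{q_0}_\tau(x)|x^\tau|^{-\beta}\Bigr]dx.
\end{equation}
Third, subtracting the two displays and grouping terms according to whether the kernel is $P_\lambda(x,y,\mu)$ or $P_\lambda(x^\tau,y,\mu)$ yields \eqref{gg1}. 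The identity \eqref{gg2} is proved by the verbatim analogue: use the $v$-equation in \eqref{pm1}, split $\partial\mathbb{R}^{n}_{+}=\Sigma_{y,\tau}\cup\Sigma_{y,\tau}^{\,c}$, reflect, and apply the same two kernel identities with the roles of $x$ and $y$ (and of $p_0,q_0$) interchanged.

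The only slightly delicate point — and what I expect to be the \emph{main bookkeeping obstacle} — is tracking the weights $|y|^{-\alpha}$ and $|x|^{-\beta}$ under the reflection: $|y|^{-\alpha}$ factors out cleanly because $y$ is not the variable of integration, but $|x|^{-\beta}$ is \emph{not} invariant under $x\mapsto x^\tau$, so a weight $|x^\tau|^{-\beta}$ appears on the reflected piece and must be kept distinct from $|x|^{-\beta}$ throughout. Once this bookkeeping is done correctly, the identities fall out by pure algebra; no integrability or regularity input beyond that already available from $(u,v)\in L^{p_0+1}(\partial\mathbb R^n_+)\times L^{q_0+1}(\mathbb R^n_+)$ and Theorem \ref{re1} is needed, since every integral is treated as an absolutely convergent Lebesgue integral and Fubini/change-of-variables is applied termwise.
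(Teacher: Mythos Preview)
Your proposal is correct and follows essentially the same approach as the paper: split the integration domain at $T_\tau$, reflect the far half via $x\mapsto x^\tau$, invoke the two kernel symmetries $P_\lambda(x^\tau,y^\tau,\mu)=P_\lambda(x,y,\mu)$ and $P_\lambda(x,y^\tau,\mu)=P_\lambda(x^\tau,y,\mu)$, and subtract. Your explicit bookkeeping of the non-invariant weight $|x|^{-\beta}$ versus $|x^\tau|^{-\beta}$ is exactly the point the paper handles implicitly, and in fact your computation makes clear that the second integrand in \eqref{gg1} should carry $|x^\tau|^{-\beta}$ on the $v_\tau^{q_0}$ term (a harmless typo in the stated identity).
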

\begin{proof}
	By direct computation, we know
	\begin{equation}\nonumber
		\begin{aligned}
			u(y)&=\int_{\mathbb{R}^{n}_{+}}|y|^{-\alpha}P_{\lambda}(x,y,\mu)v^{q_{0}}(x)|x|^{-\beta}dx\\
			&=\int_{\Sigma_{x,\tau}}|y|^{-\alpha}P_{\lambda}(x,y,\mu)v^{q_{0}}(x)|x|^{-\beta}dx+\int_{\Sigma_{x,\tau}}|y|^{-\alpha}P_{\lambda}(x^{\tau},y,\mu)v_{\tau}^{q_{0}}(x)|x^{\tau}|^{-\beta}dx,
		\end{aligned}
	\end{equation}
	and
	\begin{equation}\nonumber
		\begin{aligned}
			u_{\tau}(y)&=\int_{\mathbb{R}^{n}_{+}}|y^{\tau}|^{-\alpha}P_{\lambda}(x,y^{\tau},\mu)v^{q_{0}}(x)|x|^{-\beta}dx\\
			&=\int_{\Sigma_{x,\tau}}|y^{\tau}|^{-\alpha}P_{\lambda}(x,y^{\tau},\mu)v^{q_{0}}(x)|x|^{-\beta}dx+\int_{\Sigma_{x,\tau}}|y^{\tau}|^{-\alpha}P_{\lambda}(x^{\tau},y^{\tau},\mu)v_{\tau}^{q_{0}}(x)|x^{\tau}|^{-\beta}dx.
		\end{aligned}
	\end{equation}
	Since $P_{\lambda}(x^{\tau},y^{\tau},\mu)=P_{\lambda}(x,y,\mu)$ and $P_{\lambda}(x,y^{\tau},\mu)=P_{\lambda}(x^{\tau},y,\mu)$, it's not difficult to find that
	\begin{equation}\nonumber
		\begin{aligned}
			u(y)-u_{\tau}(y)&=\int_{\Sigma_{x,\tau}}P_{\lambda}(x,y,\mu)\left(|y|^{-\alpha}v^{q_{0}}(x)|x|^{-\beta}-|y^{\tau}|^{-\alpha}v_{\tau}^{q_{0}}(x)|x^{\tau}|^{-\beta}\right)dx\\ &+\int_{\Sigma_{x,\tau}}P_{\lambda}(x^{\tau},y,\mu)\left(|y|^{-\alpha}v_{\tau}^{q_{0}}(x)|x|^{-\beta}-|y^{\tau}|^{-\alpha}v^{q_{0}}(x)|x|^{-\beta}\right)dx.
		\end{aligned}
	\end{equation}
	Clearly, in the same ways, we get
	\begin{equation}\nonumber
		\begin{aligned}
			v(x)-v_{\tau}(x)&=\int_{\Sigma_{y,\tau}}\left(|y|^{-\alpha}P_{\lambda}(x,y,\mu)|x|^{-\beta}-|y^{\tau}|^{-\alpha}P_{\lambda}(x,y^{\tau},\mu)|x|^{-\beta}\right)u^{p_{0}}(y)dy\\
			&+\int_{\Sigma_{y,\tau}}\left(|y|^{-\alpha}P_{\lambda}(x,y^{\tau},\mu)|x^{\tau}|^{-\beta}-|y^{\tau}|^{-\alpha}P_{\lambda}(x^{\tau},y^{\tau},\mu)|x^{\tau}|^{-\beta}\right)u_{\tau}^{p_{0}}(y)dy\\
			&=\int_{\Sigma_{y,\tau}}P_{\lambda}(x,y,\mu)\left(|y|^{-\alpha}u^{p_{0}}(y)|x|^{-\beta}-|y^{\tau}|^{-\alpha}u_{\tau}^{p_{0}}(y)|x^{\tau}|^{-\beta}\right)dy\\ &+\int_{\Sigma_{y,\tau}}P_{\lambda}(x^{\tau},y,\mu)\left(|y|^{-\alpha}u_{\tau}^{p_{0}}(y)|x|^{-\beta}-|y^{\tau}|^{-\alpha}u^{p_{0}}(y)|x|^{-\beta}\right)dy.
		\end{aligned}
	\end{equation}
This finishes the proof.
\end{proof}
\textbf{Proof of Theorem \ref{thm3}.} We divide the proof into two steps.

\textbf{Step 1.} We claim that for $\tau$ sufficiently negative, there holds
\begin{equation}\label{st1}
	u_{\tau}(y)\geq u(y),\ \ v_{\tau}(x)\geq v(x),\ \ \forall x\in\Sigma_{x,\tau},\ \ y\in\Sigma_{y,\tau}.
\end{equation}
Let
\begin{equation}\nonumber
	\Sigma_{y,\tau}^{u}=\left\lbrace y\in\Sigma_{y,\tau}| u(y)>u_{\tau}(y)\right\rbrace,\ \ \Sigma_{x,\tau}^{v}=\left\lbrace x\in\Sigma_{x,\tau}| v(x)>v_{\tau}(x)\right\rbrace.
\end{equation}
Thus, we will show that for $\tau$ sufficiently negative, the sets $\Sigma_{y,\tau}^{v}$ and $\Sigma_{x,\tau}^{u}$ must have measure zero.

For any $x\in\Sigma_{x,\tau}^{v}$ and $y\in\Sigma_{y,\tau}^{u}$, since $|x^{\tau}|<|x|$, we know from \eqref{gg1}
\begin{equation}\nonumber
	\begin{aligned}
		u(y)-u_{\tau}(y)&\leq\int_{\Sigma_{x,\tau}}|y|^{-\alpha}\left(P_{\lambda}(x,y,\mu)-P_{\lambda}(x^{\tau},y,\mu)\right)\left(v^{q_{0}}(x)|x|^{-\beta}-v_{\tau}^{q_{0}}(x)|x^{\tau}|^{-\beta}\right)dx\\
		&\leq \int_{\Sigma_{x,\tau}^{v}}|y|^{-\alpha}P_{\lambda}\left(v^{q_{0}}(x)-v_{\tau}^{q_{0}}(x)\right)dx.
	\end{aligned}
\end{equation}
Applying Mean Value Theorem, we get 
\begin{equation}\nonumber
	u(y)-u_{\tau}(y)\leq q_{0}\int_{\Sigma_{x,\tau}^{v}}|y|^{-\alpha}P_{\lambda}(x,y,\mu)v^{q_{0}-1}(x)\left(v(x)-v_{\tau}(x)\right)dx.
\end{equation}
Similarly,
\begin{equation}\nonumber
	v(x)-v_{\tau}(x)\leq p_{0}\int_{\Sigma_{y,\tau}^{u}}|y|^{-\alpha}P_{\lambda}(x,y,\mu)u^{p_{0}-1}(y)\left(u(y)-u_{\tau}(y)\right)dy.
\end{equation}	
Using the H\"{o}lder inequality and Theorem \ref{thm1} in the above inequalities, we obtain
\begin{equation}
	\|u-u_{\tau}\|_{L^{p_{0}+1}(\Sigma_{y,\tau}^{u})}\leq C\|v\|^{q_{0}-1}_{L^{q_{0}+1}(\Sigma_{x,\tau})}\|v-v_{\tau}\|_{L^{q_{0}+1}(\Sigma_{x,\tau}^{v})},
\end{equation}
and 
\begin{equation}
	\|v-v_{\tau}\|_{L^{q_{0}+1}(\Sigma_{x,\tau}^{v})}\leq C\|u\|^{p_{0}-1}_{L^{p_{0}+1}(\Sigma_{y,\tau})}\|u-u_{\tau}\|_{L^{p_{0}+1}(\Sigma_{y,\tau}^{u})}.
\end{equation}
In view of $\left(u, v\right)\in L^{p_{0}+1}\left(\partial \mathbb{R}^{n}_{+}\right)\times L^{q_{0}+1}\left(\mathbb{R}^{n}_{+}\right)$, for $\tau$ sufficiently negative, it holds that
\begin{equation}\nonumber
	\|v\|_{L^{q_{0}+1}(\Sigma_{x,\tau})}\leq \frac{1}{2},\ \ \|u\|_{L^{p_{0}+1}(\Sigma_{y,\tau})}\leq \frac{1}{2},
\end{equation}
which indicates that
\begin{equation}\label{f2}
	\|u-u_{\tau}\|_{L^{p_{0}+1}(\Sigma_{y,\tau}^{u})}=0,\ \ \|v-v_{\tau}\|_{L^{q_{0}+1}(\Sigma_{x,\tau}^{v})}=0.
\end{equation}
Therefore, we deduce that $\Sigma_{y,\tau}^{u}$ and $\Sigma_{x,\tau}^{v}$ must be empty set. This implies that \eqref{st1}. 

\textbf{Step 2.} Inequality \eqref{st1} provides a starting point to move the plane $T_{\tau}$. Furthermore, we move the plane $T_{\tau}$ from $-\infty$ to the right as long as \eqref{st1} keeps. one can denote
\begin{equation}\label{st2}
	\tau_{0}=\sup\left\lbrace \tau|u_{\rho}(y)\geq u(y), v_{\rho}(x)\geq v(x), \rho\leq\tau, \forall y\in \Sigma_{y,\tau}, x\in\Sigma_{x,\tau} \right\rbrace. 
\end{equation}
Assume that $\tau_{0}<0$, then we will show that the solution $u$ and $v$ must be symmetric and monotone about the limiting plane, Namely,
\begin{equation}\label{f0}
	u_{\tau_{0}}(y)\equiv u(y),\ \ v_{\tau_{0}}(x)\equiv v(x),\ \ \forall y\in\Sigma_{y,\tau_{0}}, x\in\Sigma_{x,\tau_{0}}.
\end{equation}
Suppose for such a $\tau_{0}$, on $\Sigma_{y,\tau_{0}}$ and $\Sigma_{x,\tau_{0}}$, we get
\begin{equation}\label{f1}
	u(y)\leq u_{\tau_{0}}(y),\ \  v(x)\leq v_{\tau_{0}}(x),\ \ \mbox{but}\ \ u(y)\not=u_{\tau_{0}}(y),\ \ v(x)\not= v_{\tau_{0}}(x).
\end{equation}	

We will show that the plane can be moved further to the right. To be precise, there exists an positive parameter $\varepsilon$ such that for any $x\in\Sigma_{x,\tau}$ and $y\in\Sigma_{y,\tau}$,
\begin{equation}\label{f3}
	u(y)\leq u_{\tau}(y), v(x)\leq v_{\tau}(x),\ \ \forall\tau\in\left[\tau_{0},\tau_{0}+\varepsilon\right),
\end{equation}
which would contradict with the definition of $\tau_{0}$.

Combining with the first step and \eqref{f1}, we know that $u(y)<u_{\tau_{0}}(y)$ and $v(x)<v_{\tau_{0}}(x)$ in the interior of $\Sigma_{y,\tau_{0}}$ and $\Sigma_{x,\tau_{0}}$ respectively. Moreover, we define
\begin{equation}\nonumber
	\Sigma^{\widetilde{v}}_{x,\tau_{0}}=\left\lbrace x\in\Sigma_{x,\tau_{0}}|v(x)\geq v_{\tau}(x)\right\rbrace,\ \ \Sigma^{\widetilde{u}}_{y,\tau_{0}}=\left\lbrace y\in\Sigma_{y,\tau_{0}}|u(y)\geq u_{\tau}(y)\right\rbrace.
\end{equation}
Therefore, it's easy to find that $\Sigma^{\widetilde{v}}_{x,\tau_{0}}$ and $\Sigma^{\widetilde{u}}_{y,\tau_{0}}$ have measure zero, and $\lim_{\tau\rightarrow\tau_{0}}\Sigma^{u}_{y,\tau}\subset\Sigma^{\widetilde{u}}_{y,\tau_{0}}$, $\lim_{\tau\rightarrow\tau_{0}}\Sigma^{v}_{x,\tau}\subset\Sigma^{\widetilde{v}}_{x,\tau_{0}}$. Furthermore, By virtue of $\left(u, v\right)\in L^{p_{0}+1}\left(\partial \mathbb{R}^{n}_{+}\right)\times L^{q_{0}+1}\left(\mathbb{R}^{n}_{+}\right)$, we can select a adequate small $\varepsilon$ such that for any $\tau$ in $\left[\tau_{0},\tau_{0}+\varepsilon\right)$,
\begin{equation}\nonumber
	\|v\|_{L^{q_{0}+1}(\Sigma_{x,\tau})}\leq \frac{1}{2},\ \ \|u\|_{L^{p_{0}+1}(\Sigma_{y,\tau})}\leq \frac{1}{2}.
\end{equation}
In fact, the estimates is similar to \eqref{f2}, it holds that
\begin{equation}\nonumber
	\|u-u_{\tau}\|_{L^{p_{0}+1}(\Sigma_{y,\tau}^{u})}=0,\ \ \|v-v_{\tau}\|_{L^{q_{0}+1}(\Sigma_{x,\tau}^{v})}=0.
\end{equation}
Based on the above discussion, we conclude that $\Sigma^{u}_{y,\tau}$ and $\Sigma^{v}_{x,\tau}$ must be empty set. This proves \eqref{f3} and hence \eqref{f0}.

If $\tau_{0}=0$, then we can make full use of the same ways in the opposite direction. Indeed, we will get two situations. If the plane $T_{\tau}$ stop at some point before the origin, we can deduce that  $u(y)$ and  $v(x)$ must be symmetric and monotone decreasing in the $x_{1}$-direction based on the previous analysis. If it stays at the origin again, we also have the symmetry and monotonicity result with $x_{1}=0$. Since $x_{1}$ direction can be chosen arbitrarily, we deduce that $u(y)$ and $v(x)|_{\partial\mathbb{R}^{n}_{+}}$ must be radially symmetry and monotone decreasing about some point $y_{0}\in\partial\mathbb{R}^{n}_{+}$. The proof is accomplished.
$\hfill{} \Box$

\section{The single weighted integral system}
In this section, we will consider the necessary condition for the existence of non-negative solutions of the integral system \eqref{yu1} with single weight.
\begin{equation}\nonumber
\left\lbrace 
\begin{aligned}
u(y)&=\int_{\mathbb{R}^{n}_{+}}|x|^{-\beta}P_{\lambda}(x,y,\mu)v^{q_{0}}(x)dx, &y\in \partial\mathbb{R}^{n}_{+},\\
v(x)&=\int_{\partial\mathbb{R}^{n}_{+}}|y|^{-\alpha}P_{\lambda}(x,y,\mu)u^{p_{0}}(y)dy, &x\in \mathbb{R}^{n}_{+}.
\end{aligned}
\right.
\end{equation}
\textbf{Proof of Theorem \ref{thm4}.} Firstly, according to the integration by parts, we get
\begin{equation}\nonumber
\begin{aligned}
\int_{B^{n-1}_{R}\backslash B^{n-1}_{\varepsilon}}&|y|^{-\alpha}u^{p_{0}}(y)\left(y\cdot\nabla u(y)\right)dy \\
&=\frac{1}{p_{0}+1}\int_{B^{n-1}_{R}\backslash B^{n-1}_{\varepsilon}}|y|^{-\alpha}y\cdot\nabla\left(u^{p_{0}+1}(y)\right)dy\\
&=\frac{R^{1-\alpha}}{p_{0}+1}\int_{\partial B^{n-1}_{R}}u^{p_{0}+1}(y)d\tau+\frac{\varepsilon^{1-\alpha}}{p_{0}+1}\int_{\partial B^{n-1}_{\varepsilon}}u^{p_{0}+1}(y)d\tau\\
&\ \ -\frac{n-1-\alpha}{p_{0}+1}\int_{B^{n-1}_{R}\backslash B^{n-1}_{\varepsilon}}u^{p_{0}+1}(y)|y|^{-\alpha}dy.
\end{aligned}
\end{equation}
In the same way, we know
\begin{equation}\nonumber
	\begin{aligned}
\int_{B^{+}_{R}\backslash B^{+}_{\varepsilon}}&|x|^{-\beta}v^{q_{0}}(x)\left(x \cdot\nabla v(x)\right)dx\\
&=\frac{1}{q_{0}+1}\int_{B^{+}_{R}\backslash B^{+}_{\varepsilon}}|x|^{-\beta}x\cdot\nabla\left(v^{q_{0}+1}(x)\right)dx \\
&=\frac{R^{1-\beta}}{q_{0}+1}\int_{\partial B^{+}_{R}}v^{q_{0}+1}(y)d\tau+\frac{\varepsilon^{1-\beta}}{q_{0}+1}\int_{\partial B^{+}_{\varepsilon}}v^{q_{0}+1}(x)d\tau\\
&\ \ -\frac{n-\beta}{q_{0}+1}\int_{B^{+}_{R}\backslash B^{+}_{\varepsilon}}v^{q_{0}+1}(x)|x|^{-\beta}dx.
	\end{aligned}
\end{equation}
In particular, under the assumptions of Theorem \ref{thm4}, we know $\left(u,v\right)\in L^{p_{0}+1}\left(|y|^{-\alpha}dy,\partial R^{n}_{+}\right)\times L^{q_{0}+1}\left(|x|^{-\beta}dx,R^{n}_{+}\right)$, then there exists $R_{i}\rightarrow+\infty$ and $\varepsilon_{i}\rightarrow0$ such that
\begin{equation}\nonumber
R^{1-\alpha}_{i}\int_{\partial B^{n-1}_{R_{i}}}u^{p_{0}+1}(y)d\tau\rightarrow0,\quad R^{1-\beta}_{i}\int_{\partial B^{+}_{R_{i}}}v^{q_{0}+1}(x)d\tau\rightarrow0.
\end{equation}
Similarly, we obtain
\begin{equation}\nonumber
\varepsilon^{1-\alpha}_{i}\int_{\partial B^{n-1}_{\varepsilon_{i}}}u^{p_{0}+1}(y)d\tau\rightarrow0,\quad \varepsilon^{1-\beta}_{i}\int_{\partial B^{+}_{\varepsilon_{i}}}v^{q_{0}+1}(x)d\tau\rightarrow0.
\end{equation}	
Based on the above analysis, we further get 
\begin{equation}\label{yin1}
\begin{aligned}
\int_{\partial\mathbb{R}^{n}_{+}}&|y|^{-\alpha}u^{p_{0}}(y)\left(y\cdot\nabla u(y)\right)dy+\int_{\mathbb{R}^{n}_{+}}|x|^{-\beta}v^{q_{0}}(x)\left(x \cdot\nabla v(x)\right)dx\\
&=-\frac{n-1-\alpha}{p_{0}+1}\int_{\partial\mathbb{R}^{n}_{+}}|y|^{-\alpha}u^{p_{0}+1}(y)dy-\frac{n-\beta}{q_{0}+1}\int_{\mathbb{R}^{n}_{+}}|x|^{-\beta}v^{q_{0}+1}(x)dx.
\end{aligned}
\end{equation}
Noticing that the weighted integral equations \eqref{yu1}, and by a direct calculation, we obtain
\begin{equation}\label{yin2}
\begin{aligned}
\nabla u(y)\cdot y&=\frac{du(\rho y)}{d\rho}|_{\rho=1}\\
&=-\mu\int_{\mathbb{R}^{n}_{+}}P_{\lambda}(x,y,\mu)|x-y|^{-2}(y-x)\cdot y |x|^{-\beta}v^{q_{0}}(x)dx,
\end{aligned}
\end{equation}	
and
\begin{equation}\label{yin3}
	\begin{aligned}
\nabla v(x)\cdot x&=\frac{dv(\rho x)}{d\rho}|_{\rho=1}\\
&=-\mu\int_{\partial\mathbb{R}^{n}_{+}}P_{\lambda}(x,y,\mu)|x-y|^{-2}(x-y)\cdot x|y|^{-\alpha}u^{p_{0}}(y)dy\\
&\ \ +\lambda\int_{\partial\mathbb{R}^{n}_{+}}P_{\lambda}(x,y,\mu)|y|^{-\alpha}u^{p_{0}}(y)dy.
	\end{aligned}
\end{equation}	
It follows from \eqref{yin2} and \eqref{yin3} that 
\begin{equation}\nonumber
\begin{aligned}
\int_{\partial\mathbb{R}^{n}_{+}}&|y|^{-\alpha}u^{p_{0}}(y)\left(y\cdot\nabla u(y)\right)dy+\int_{\mathbb{R}^{n}_{+}}|x|^{-\beta}v^{q_{0}}(x)\left(x \cdot\nabla v(x)\right)dx\\
&=-\left(\mu-\lambda\right)\int_{\mathbb{R}^{n}_{+}}\int_{\partial R^{n}_{+}}|y|^{-\alpha}P_{\lambda}(x,y,\mu)u^{p_{0}+1}(y)v^{q_{0}+1}(x)|x|^{-\beta}dy dx\\
&=-\left(\mu-\lambda\right)\int_{\mathbb{R}^{n}_{+}}P_{\lambda}(x,y,\mu)v^{q_{0}+1}(x)|x|^{-\beta}dx\\
&=-\left(\mu-\lambda\right)\int_{\partial\mathbb{R}^{n}_{+}}P_{\lambda}(x,y,\mu)u^{p_{0}+1}(y)|y|^{-\alpha}dy.
\end{aligned}
\end{equation}
Combining \eqref{yin1} and the above identity, we know that $\frac{n-1-\alpha}{p_{0}+1}+\frac{n-\beta}{q_{0}+1}=\mu-\lambda$, which completes the proof.
$\hfill{} \Box$	
	
\section{Application to nonlocal elliptic equation on the upper half space}		
In this section, as an application of Stein-Weiss type inequality \eqref{in2}, we are interested in studying the symmetry and non-existence of positive solutions for the equations \eqref{main1} by the method of moving plane in half space. In fact, for Hartree type equations, due to the fact that the nonlinearities are of nonlocal forms, this fact implies that this method is not able to use directly because of the lack of the maximum principles. However, with the help of the integral inequality, we can continue to establish the symmetry and non-existence of weakly solutions for the equations \eqref{main1} via moving plane arguments. For convenience, we firstly write
\begin{equation}
	m(x)=\int_{\partial\mathbb{R}^{n}_{+}}\frac{F(u(y))}{|x|^{\beta}|x-y|^{\mu}|y|^{\alpha}}dy,\qquad \eta(y)=\int_{\mathbb{R}^{n}_{+}}\frac{G(u(x))x_{n}^{\lambda}}{|x|^{\beta}|x-y|^{\mu}|y|^{\alpha}}dx,
\end{equation}
then the equations \eqref{main1} can be rewritten by the following form,
\begin{equation}
	\left\lbrace 
	\begin{aligned}
		-\Delta u(x)&=x_{n}^{\lambda}m(x)g(u(x)),\ \ x\in\mathbb{R}^{n}_{+},\\
		\frac{\partial u}{\partial \upsilon}(y)&=\eta(y)f(u(y)),\ \ y\in\partial\mathbb{R}^{n}_{+}.
	\end{aligned}
	\right.
\end{equation}

Next, It should be noted that, due to the lack of the decay property of those solutions, we fail to prove symmetry and nonexistence results for the equations \eqref{main1} via moving plane arguments directly. In order to overcome this difficulty, we will introduce the Kelvin transform of centered at a point. Then let us take any point $x_{p}\in\partial\mathbb{R}^{n}_{+}$ and define the Kelvin transform of $u(x)$, $m(x)$ and $\eta(y)$ as follows.
\begin{equation}\nonumber
	\begin{aligned}
		&v(x)=\frac{1}{|x-x_{\rho}|^{n-2}}u\left(\frac{x-x_{\rho}}{|x-x_{\rho}|^{2}}+x_{\rho}\right),\quad \omega(x)=\frac{1}{|x-x_{\rho}|^{2\beta+\mu}}m\left(\frac{x-x_{\rho}}{|x-x_{\rho}|^{2}}+x_{\rho}\right),\\
		&z(y)=\frac{1}{|y-y_{\rho}|^{2\alpha+\mu}}\eta\left(\frac{y-y_{\rho}}{|y-y_{\rho}|^{2}}+y_{\rho}\right).
	\end{aligned}
\end{equation}
Obviously, by the above definition, for $|x-x_{p}|\geq1$, we have
\begin{equation}\nonumber
v(x)\leq \frac{C}{|x-x_{p}|^{n-2}},\ \ \omega(x)\leq \frac{C}{|x-x_{p}|^{2\beta+\mu}},
\end{equation}
and 
\begin{equation}
z(y)\leq\frac{C}{|y-y_{p}|^{2\alpha+\mu}}.
\end{equation}
It's easy to find that $\left(v,\omega,z\right)$ has the singularities at point $x_{p}$. Here and the rest of this paper, without loss of generality, we take $x_{p}=0$. 

In addition, by a straightforward computation, we obtain $v(x),~\omega(x),~z(y)$ satisfies the following elliptic system
\begin{equation}\label{main2}
	\left\lbrace 
	\begin{aligned}
		-\Delta v(x)&=x_{n}^{\lambda}\omega(x)k\left(|x|^{n-2}v(x)\right)v(x)^{\frac{2\lambda+n+2-(2\beta+\mu)}{n-2}},\ \ x\in\mathbb{R}^{n}_{+},\\
		\frac{\partial v}{\partial \upsilon}(y)&=z(y)h\left(|y|^{n-2}v(y)\right)v(y)^{\frac{n-(2\alpha+\mu)}{n-2}},\ \ y\in\partial\mathbb{R}^{n}_{+}\backslash\left\{0\right\},\\
		\omega(x)&=\int_{\partial\mathbb{R}^{n}_{+}}\frac{H(|y|^{n-2}v(y))}{|x|^{\beta}|x-y|^{\mu}|y|^{\alpha}}v(y)^{\frac{2(n-1)-(2\alpha+\mu)}{n-2}}dy,\ \ x\in\mathbb{R}^{n}_{+},\\
		z(y)&=\int_{\mathbb{R}^{n}_{+}}\frac{K(|x|^{n-2}v(x))x_{n}^{\lambda}}{|x|^{\beta}|x-y|^{\mu}|y|^{\alpha}}v(x)^{\frac{2n+2\lambda-(2\beta+\mu)}{n-2}}dx,\ \ y\in\partial\mathbb{R}^{n}_{+}\backslash\left\{0\right\}.		
	\end{aligned}
	\right. 
\end{equation}	
Therefore, we will turn attention to studying the symmetry and monotonicity of $v(x)$. Moreover, our main approach relied on the moving plane arguments. To achieve this goal, we will give some necessary symbols. For $\delta>0$, we define 
\begin{equation}\nonumber
	\Sigma_{\delta}=\left\{x\in\mathbb{R}^{n}_{+}|x_{1}>\delta\right\},\ \ \partial\Sigma_{\delta}=\left\{x\in\partial\mathbb{R}^{n}_{+}|x_{1}>\delta\right\},\ \ T_{\delta}=\left\{x\in\mathbb{R}^{n}_{+}|x_{1}=\delta\right\},
\end{equation}
and we also denote the reflected point and functions relate to the hyperplane $T_{\delta}$ by
\begin{equation}\nonumber
x^{\delta}=\left(2\delta-x_{1},...,x_{n}\right),\ \ v_{\delta}(x)=v(x^{\delta}), p^{\delta}=\left(2\delta,0,...,0\right).
\end{equation}
Moreover, we write
\begin{equation}\nonumber
\Sigma_{\delta}^{v}=\left\{x\in\Sigma_{\delta}|v(x)>v_{\delta}(x)\right\},\ \ \partial\Sigma_{\delta}^{v}=\left\{x\in\partial\Sigma_{\delta}|v(x)>v_{\delta}(x)\right\}.
\end{equation}

In the light of the above preparations, we shall give two basic inequality, which is useful in the later proof.
\begin{lem}\label{la1}
	Assume that $v$ is non-negative weak solution of the equations \eqref{main1}, then we have
	\begin{equation}\label{pen1}
		\omega(x)-\omega_{\delta}(x)\leq\int_{\partial\Sigma_{\delta}^{v}}\frac{H\left(|y|^{n-2}v(y)\right)}{|x|^{\beta}|x-y|^{\mu}|y|^{\alpha}}\left(v(y)^{\frac{2(n-1)-(2\alpha+\mu)}{n-2}}-v_{\delta}(y)^{\frac{2(n-1)-(2\alpha+\mu)}{n-2}}\right)dy,
	\end{equation}
	and
	\begin{equation}\label{pen2}
		z(y)-z_{\delta}(y)\leq\int_{\Sigma_{\delta}^{v}}\frac{K\left(|x|^{n-2}v(x)\right)x_{n}^{\lambda}}{|x|^{\beta}|x-y|^{\mu}|y|^{\alpha}}\left(v(x)^{\frac{2n+2\lambda-(2\beta+\mu)}{n-2}}-v_{\delta}(x)^{\frac{2n+2\lambda-(2\beta+\mu)}{n-2}}\right)dx.
	\end{equation}
\end{lem}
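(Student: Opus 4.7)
The plan is to prove \eqref{pen1} by directly manipulating the defining integral of $\omega$ and exploiting the reflection identities $|x^\delta - y^\delta| = |x-y|$ and $|x^\delta - y| = |x - y^\delta|$; the inequality \eqref{pen2} will then follow by the symmetric argument, swapping the roles of interior/boundary integration and of the weights $\alpha,\beta$. Denote $p = \frac{2(n-1)-(2\alpha+\mu)}{n-2}$, $\Phi(y) = H(|y|^{n-2}v(y))v(y)^{p}$, and $G(x,y) = (|x|^{\beta}|x-y|^{\mu}|y|^{\alpha})^{-1}$, so that $\omega(x) = \int_{\partial\mathbb{R}^n_+} G(x,y)\Phi(y)\,dy$. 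First I would substitute $y\mapsto y^\delta$ in $\omega(x^\delta)$, split $\partial\mathbb{R}^n_+ = \partial\Sigma_\delta \cup (\partial\mathbb{R}^n_+\setminus\partial\Sigma_\delta)$, and change variables on the complement to obtain the reflection identity
\[
\omega(x)-\omega_\delta(x)=\int_{\partial\Sigma_\delta}\bigl\{[G(x,y)-G(x^\delta,y)]\Phi(y)+[G(x,y^\delta)-G(x^\delta,y^\delta)]\Phi(y^\delta)\bigr\}\,dy.
\]

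Next I would rewrite the integrand as $[G(x,y)-G(x^\delta,y)][\Phi(y)-\Phi(y^\delta)] + [G(x,y)+G(x,y^\delta)-G(x^\delta,y)-G(x^\delta,y^\delta)]\Phi(y^\delta)$. For $x\in\Sigma_\delta$ and $y\in\partial\Sigma_\delta$ one has $|x|\geq|x^\delta|$, $|y|\geq|y^\delta|$, and $|x-y|\leq|x-y^\delta|$. Writing the residual kernel bracket as $\tfrac{A}{|x-y|^\mu}+\tfrac{B}{|x-y^\delta|^\mu}$, with $A=\tfrac{1}{|x|^\beta|y|^\alpha}-\tfrac{1}{|x^\delta|^\beta|y^\delta|^\alpha}\leq 0$ and $A+B=\bigl(\tfrac{1}{|x|^\beta}-\tfrac{1}{|x^\delta|^\beta}\bigr)\bigl(\tfrac{1}{|y|^\alpha}+\tfrac{1}{|y^\delta|^\alpha}\bigr)\leq 0$, the identity
\[
\tfrac{A}{|x-y|^\mu}+\tfrac{B}{|x-y^\delta|^\mu}=A\Bigl[\tfrac{1}{|x-y|^\mu}-\tfrac{1}{|x-y^\delta|^\mu}\Bigr]+\tfrac{A+B}{|x-y^\delta|^\mu}
\]
shows this residual kernel is $\leq 0$. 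Since $\Phi(y^\delta)\geq 0$, the residual integral is non-positive and can be discarded.

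It then remains to bound $\int_{\partial\Sigma_\delta}[G(x,y)-G(x^\delta,y)][\Phi(y)-\Phi(y^\delta)]\,dy$. On $\partial\Sigma_\delta\setminus\partial\Sigma_\delta^v$, $v(y)\leq v_\delta(y)$; a careful sign check using the monotonicity of $H$ in its composite argument yields $(G(x,y)-G(x^\delta,y))(\Phi(y)-\Phi(y^\delta))\leq 0$, so the integration can be restricted to $\partial\Sigma_\delta^v$. On $\partial\Sigma_\delta^v$ one has $v(y)>v_\delta(y)$ and $|y|>|y^\delta|$, hence $|y|^{n-2}v(y)>|y^\delta|^{n-2}v_\delta(y)$; since $H$ is non-increasing, the identity
\[
\Phi(y)-\Phi(y^\delta)=H(|y|^{n-2}v(y))(v(y)^p-v_\delta(y)^p)+v_\delta(y)^p[H(|y|^{n-2}v(y))-H(|y^\delta|^{n-2}v_\delta(y))]
\]
gives $\Phi(y)-\Phi(y^\delta)\leq H(|y|^{n-2}v(y))(v(y)^p-v_\delta(y)^p)$. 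Combined with $G(x,y)-G(x^\delta,y)\leq G(x,y)$ (the subtracted term is non-negative), this produces exactly the bound \eqref{pen1}.

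The main obstacle will be the sign analysis in the region $\partial\Sigma_\delta\setminus\partial\Sigma_\delta^v$: neither factor of $(G(x,y)-G(x^\delta,y))(\Phi(y)-\Phi(y^\delta))$ has a definite sign there, and to conclude non-positivity one must combine the monotonicity of $H$ at $|y|^{n-2}v(y)$ with the geometric ``doubly stochastic'' cancellation $|x|^\beta|y|^\alpha\cdot|x^\delta|^\beta|y^\delta|^\alpha = |x|^\beta|y^\delta|^\alpha\cdot|x^\delta|^\beta|y|^\alpha$ encoded in the four reflected weights.
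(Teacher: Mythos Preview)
Your residual-kernel step is clean, but the obstacle you flag in the final paragraph is real and your decomposition does not survive it. On $\partial\Sigma_\delta\setminus\partial\Sigma_\delta^{v}$ you need the integrand $[G(x,y)-G(x^\delta,y)][\Phi(y)-\Phi(y^\delta)]$ to be non-positive, but the first factor has no sign: with $a=|x|^{-\beta}$, $a'=|x^\delta|^{-\beta}$, $c=|x-y|^{-\mu}$, $c'=|x^\delta-y|^{-\mu}$ one has $G(x,y)-G(x^\delta,y)=|y|^{-\alpha}(ac-a'c')$, and $a\le a'$ while $c\ge c'$. The ``doubly stochastic'' identity you mention is just commutativity of multiplication and does not resolve this competition, so as written the argument breaks here.

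The paper avoids this by grouping differently. Writing also $b=|y|^{-\alpha}$, $b'=|y^\delta|^{-\alpha}$, the exact difference is
\[
\omega(x)-\omega_\delta(x)=\int_{\partial\Sigma_\delta}\Bigl\{c\bigl[ab\,\Phi(y)-a'b'\,\Phi(y^\delta)\bigr]+c'\bigl[ab'\,\Phi(y^\delta)-a'b\,\Phi(y)\bigr]\Bigr\}\,dy,
\]
and one rewrites the integrand as $(c-c')\bigl[ab\,\Phi(y)-a'b'\,\Phi(y^\delta)\bigr]+c'(a-a')\bigl[b\,\Phi(y)+b'\,\Phi(y^\delta)\bigr]$. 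The second bracket is non-positive and can be dropped; in the first, $ab\,\Phi(y)-a'b'\,\Phi(y^\delta)\le ab[\Phi(y)-\Phi(y^\delta)]$ since $ab\le a'b'$. This yields
\[
\omega(x)-\omega_\delta(x)\le\int_{\partial\Sigma_\delta}\frac{1}{|x|^\beta|y|^\alpha}\Bigl(\frac{1}{|x-y|^\mu}-\frac{1}{|x^\delta-y|^\mu}\Bigr)\bigl[\Phi(y)-\Phi(y^\delta)\bigr]\,dy,
\]
whose kernel is manifestly non-negative. Now the complement region is harmless: for $y\in\partial\Sigma_\delta\setminus\partial\Sigma_\delta^{v}$ one checks $\Phi(y)\le\Phi(y^\delta)$ by using \emph{both} that $F$ is increasing and that $H$ is non-increasing (write $\Phi(y)=F(|y|^{n-2}v(y))/|y|^{(n-2)p}$, increase $v(y)$ to $v_\delta(y)$ via $F$, then decrease $|y|$ to $|y^\delta|$ via $H$). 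On $\partial\Sigma_\delta^{v}$ your bound $\Phi(y)-\Phi(y^\delta)\le H(|y|^{n-2}v(y))(v(y)^p-v_\delta(y)^p)$ together with $c-c'\le c$ gives \eqref{pen1}.
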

\begin{proof}
	By direct calculation, we get
	\begin{equation}\nonumber
		\omega(x)=\int_{\partial\Sigma_{\delta}^{v}}\frac{H\left(|y|^{n-2}v(y)\right)}{|x|^{\beta}|x-y|^{\mu}|y|^{\alpha}}v(y)^{\frac{2(n-1)-(2\alpha+\mu)}{n-2}}dy+\int_{\partial\Sigma_{\delta}^{v}}\frac{H\left(|y^{\delta}|^{n-2}v_{\delta}(y)\right)}{|x|^{\beta}|x-y^{\delta}|^{\mu}|y^{\delta}|^{\alpha}}v_{\delta}(y)^{\frac{2(n-1)-(2\alpha+\mu)}{n-2}}dy,
	\end{equation}
	and
	\begin{equation}\nonumber
		\omega_{\delta}(x)=\int_{\partial\Sigma_{\delta}^{v}}\frac{H\left(|y|^{n-2}v(y)\right)}{|x^{\delta}|^{\beta}|x^{\delta}-y|^{\mu}|y|^{\alpha}}v(y)^{\frac{2(n-1)-(2\alpha+\mu)}{n-2}}dy+\int_{\partial\Sigma_{\delta}^{v}}\frac{H\left(|y^{\delta}|^{n-2}v_{\delta}(y)\right)}{|x^{\delta}|^{\beta}|x^{\delta}-y^{\delta}|^{\mu}|y^{\delta}|^{\alpha}}v_{\delta}(y)^{\frac{2(n-1)-(2\alpha+\mu)}{n-2}}dy.
	\end{equation}
	Since $x\in\Sigma_{\delta}$ and $y\in\partial\Sigma_{\delta}$, then we get
	\begin{equation}\nonumber
		\begin{aligned}
			\omega(x)-\omega_{\delta}(x)=&\int_{\partial\Sigma_{\delta}}\frac{1}{|x-y|^{\mu}}\left(\frac{H\left(|y|^{n-2}v(y)\right)v(y)^{\frac{2(n-1)-(2\alpha+\mu)}{n-2}}}{|x|^{\beta}|y|^{\alpha}}-\frac{H\left(|y^{\delta}|^{n-2}v_{\delta}(y)\right)v_{\delta}(y)^{\frac{2(n-1)-(2\alpha+\mu)}{n-2}}}{|x^{\delta}|^{\beta}|y^{\delta}|^{\alpha}}\right)dy\\
			&+\int_{\partial\Sigma_{\delta}}\frac{1}{|x^{\delta}-y|^{\mu}}\left(\frac{H\left(|y^{\delta}|^{n-2}v_{\delta}(y)\right)v_{\delta}(y)^{\frac{2(n-1)-(2\alpha+\mu)}{n-2}}}{|x|^{\beta}|y^{\delta}|^{\alpha}}-\frac{H\left(|y|^{n-2}v(y)\right)v(y)^{\frac{2(n-1)-(2\alpha+\mu)}{n-2}}}{|x|^{\beta}|y|^{\alpha}}\right)dy\\
			&\leq\int_{\Sigma_{\delta}}\frac{1}{|x|^{\beta}|y|^{\alpha}}\left(\frac{1}{|x-y|^{\mu}}-\frac{1}{|x^{\delta}-y|^{\mu}}\right)\\
			&\cdot\left(H\left(|y|^{n-2}v(y)\right)v(y)^{\frac{2(n-1)-(2\alpha+\mu)}{n-2}}-H\left(|y^{\delta}|^{n-2}v_{\delta}(y)\right)v_{\delta}(y)^{\frac{2(n-1)-(2\alpha+\mu)}{n-2}}\right)dy
		\end{aligned}
	\end{equation}
	For one things, if $y\in \partial\Sigma_{\delta}^{v}$, together with the monotonicity of $H$, then we know
	\begin{equation}\nonumber
		H\left(|y|^{n-2}v(y)\right)\leq H\left(|y^{\delta}|^{n-2}v_{\delta}(y)\right).
	\end{equation}
	For another things, if $y\in\partial\Sigma_{\delta}\backslash\partial\Sigma_{\delta}^{v}$, then we have
	\begin{equation}\nonumber
		\begin{aligned}
			H\left(|y|^{n-2}v(y)\right)v(y)^{\frac{2(n-1)-(2\alpha+\mu)}{n-2}}&=\frac{F\left(|y|^{n-2}v(y)\right)}{|y|^{2(n-1)-(2\alpha+\mu)}}\leq\frac{F\left(|y|^{n-2}v_{\delta}(y)\right)}{|y|^{2(n-1)-(2\alpha+\mu)}}\\
			&\leq\frac{F\left(|y^{\delta}|^{n-2}v_{\delta}(y)\right)}{\left(|y^{\delta}|^{n-2}v_{\delta}(y)\right)^{\frac{2(n-1)-(2\alpha+\mu)}{n-2}}}v_{\delta}(y)^{\frac{2(n-1)-(2\alpha+\mu)}{n-2}}\\
			&=H\left(|y^{\delta}|^{n-2}v_{\delta}(y)\right)v_{\delta}(y)^{\frac{2(n-1)-(2\alpha+\mu)}{n-2}}.
		\end{aligned}
	\end{equation}
	Combining with the above analysis, we immediately get the identity \eqref{pen1}.
	
	Similarly, from
	\begin{equation}\nonumber
		z(y)=\int_{\Sigma_{\delta}^{v}}\frac{K\left(|x|^{n-2}v(x)\right)x_{n}^{\lambda}}{|x|^{\beta}|x-y|^{\mu}|y|^{\alpha}}v(x)^{\frac{2n+2\lambda-(2\beta+\mu)}{n-2}}dx+\int_{\Sigma_{\delta}^{v}}\frac{K\left(|x^{\delta}|^{n-2}v_{\delta}(x)\right)x_{n}^{\lambda}}{|x^{\delta}|^{\beta}|x^{\delta}-y|^{\mu}|y|^{\alpha}}v_{\delta}(x)^{\frac{2n+2\lambda-(2\beta+\mu)}{n-2}}dx,
	\end{equation}
	and 
	\begin{equation}\nonumber
		z_{\delta}(y)=\int_{\Sigma_{\delta}^{v}}\frac{K\left(|x|^{n-2}v(x)\right)x_{n}^{\lambda}}{|x|^{\beta}|x-y^{\delta}|^{\mu}|y^{\delta}|^{\alpha}}v(x)^{\frac{2n+2\lambda-(2\beta+\mu)}{n-2}}dx+\int_{\Sigma_{\delta}^{v}}\frac{K\left(|x^{\delta}|^{n-2}v_{\delta}(x)\right)x_{n}^{\lambda}}{|x^{\delta}|^{\beta}|x^{\delta}-y^{\delta}|^{\mu}|y^{\delta}|^{\alpha}}v_{\delta}(x)^{\frac{2n+2\lambda-(2\beta+\mu)}{n-2}}dx,
	\end{equation}
	then we arrive at \eqref{pen2}. The proof is completed.
\end{proof}

\begin{lem}\label{la2}
Under the conditions of Theorem \ref{app1}, for any fixed parameter $\delta>0$, then it holds that
	
	\begin{enumerate}
		\item $v(x)\in L^{\frac{2n}{n-2}}(\Sigma_{\delta})\cup
		L^{\infty}(\Sigma_{\delta})$,
		\item $(v-v_{\delta})^{+}\in L^{\frac{2n}{n-2}}(\Sigma_{\delta})\cup
		L^{\infty}(\Sigma_{\delta})$.
	\end{enumerate}
	Moreover, there exists positive constant $C_{\delta}$, which is non-increasing in $\delta$, such that
	\begin{equation}\label{pe3}
		\begin{aligned}
			\int_{\Sigma_{\delta}}&|\nabla\left(v-v_{\delta}\right)^{+}|^{2}dx\\
			&\leq C_{\delta}\left[\|v(x)\|_{L^{\frac{2n}{n-2}}(\Sigma_{\delta}^{v})}^{\frac{2\lambda+n+2-(2\beta+\mu)}{n-2}}\|v(y)\|_{L^{\frac{2(n-1)}{n-2}}(\partial\Sigma_{\delta}^{v})}^{\frac{n-(2\alpha+\mu)}{n-2}}+\|\omega(x)\|_{L^{\frac{2n}{(2\beta+\mu)-2\lambda}}(\Sigma_{\delta}^{v})}\|v(x)\|^{\frac{2\lambda+4-(2\beta+\mu)}{2n}}_{L^{\frac{2n}{n-2}}(\Sigma_{\delta}^{v})}\right.\\
			&~~\left.+\|z(y)\|_{L^{\frac{2(n-1)}{n-2}}(\partial\Sigma_{\delta}^{v})}\|v(y)\|^{\frac{2-(2\alpha+\mu)}{2(n-1)}}_{L^{\frac{2(n-1)}{n-2}}(\partial\Sigma_{\delta}^{v})}\right]\left(\int_{\Sigma_{\delta}^{v}}|\nabla\left(v-v_{\delta}\right)^{+}|^{2}dx\right).
		\end{aligned}
	\end{equation}
\end{lem}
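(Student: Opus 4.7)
For part (1), I would combine the asymptotic decay $v(x)\leq C|x|^{-(n-2)}$ at infinity (which follows from the explicit Kelvin transform representation of $v$ in terms of the bounded continuous function $u$) with local boundedness on $\overline{\Sigma_{\delta}}$ (which stays a positive distance from the only singular point of $v$, the origin, so standard elliptic regularity applied to system \eqref{main2} gives $v\in L^{\infty}_{\mathrm{loc}}(\overline{\Sigma_{\delta}})$). Since $\int_{\{|x|\geq R,\,x_{1}>\delta\}}|v|^{\frac{2n}{n-2}}dx\lesssim\int_{R}^{\infty}r^{-2n}r^{n-1}dr<\infty$, the two pieces give $v\in L^{\frac{2n}{n-2}}(\Sigma_{\delta})\cap L^{\infty}(\Sigma_{\delta})$. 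For part (2), the pointwise bound $0\leq(v-v_{\delta})^{+}\leq v$ on $\Sigma_{\delta}^{v}$ and its vanishing outside transfer the integrability from $v$.

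For the main estimate, I would plug $\varphi=(v-v_{\delta})^{+}$ as test function into the weak form of \eqref{main2} for both $v$ and $v_{\delta}$; admissibility follows from part (2) together with standard density arguments since $u\in W^{1,2}_{\mathrm{loc}}$. Subtracting the two identities yields
\begin{equation*}
\int_{\Sigma_{\delta}}|\nabla(v-v_{\delta})^{+}|^{2}dx=I_{\mathrm{int}}+I_{\mathrm{bdry}},
\end{equation*}
where $I_{\mathrm{int}}$ is supported on $\Sigma_{\delta}^{v}$ with integrand $x_{n}^{\lambda}\bigl[\omega(x)k(|x|^{n-2}v(x))v(x)^{p}-\omega_{\delta}(x)k(|x^{\delta}|^{n-2}v_{\delta}(x))v_{\delta}(x)^{p}\bigr](v-v_{\delta})^{+}$ with $p=(2\lambda+n+2-(2\beta+\mu))/(n-2)$, and $I_{\mathrm{bdry}}$ is supported on $\partial\Sigma_{\delta}^{v}$ with the analogous integrand involving $z,h$ and exponent $q=(n-(2\alpha+\mu))/(n-2)$.

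Now on $\Sigma_{\delta}^{v}$ one has $|x|>|x^{\delta}|$ and $v(x)>v_{\delta}(x)$, hence $|x|^{n-2}v(x)>|x^{\delta}|^{n-2}v_{\delta}(x)$; the non-increasing property of $k$ and $h$ then permits the splitting
\begin{equation*}
\omega k(|x|^{n-2}v)v^{p}-\omega_{\delta}k(|x^{\delta}|^{n-2}v_{\delta})v_{\delta}^{p}\leq k(|x^{\delta}|^{n-2}v_{\delta})\bigl[\omega_{\delta}(v^{p}-v_{\delta}^{p})+(\omega-\omega_{\delta})v^{p}\bigr],
\end{equation*}
and similarly at the boundary. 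For the ``monotone'' pieces I would substitute the integral representations of $\omega_{\delta}$ and $z_{\delta}$ from \eqref{main2}, apply the mean value bound $v^{p}-v_{\delta}^{p}\leq p\,v^{p-1}(v-v_{\delta})^{+}$, and recognize a double integral of Stein--Weiss form to which Theorem \ref{thm1} applies; combined with the critical Sobolev and trace inequalities, which convert $\|(v-v_{\delta})^{+}\|_{L^{2n/(n-2)}}^{2}$ and $\|(v-v_{\delta})^{+}\|_{L^{2(n-1)/(n-2)}}^{2}$ into $\int_{\Sigma_{\delta}^{v}}|\nabla(v-v_{\delta})^{+}|^{2}dx$, this produces the first term of the bracket. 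For the ``difference'' pieces, I would invoke Lemma \ref{la1} to dominate $\omega-\omega_{\delta}$ and $z-z_{\delta}$ by integrals over $\partial\Sigma_{\delta}^{v}$ and $\Sigma_{\delta}^{v}$, then use H\"older together with a second application of Theorem \ref{thm1} and the same critical Sobolev/trace step; this yields the second and third bracket terms and gives the monotonicity of $C_{\delta}$ in $\delta$ since the underlying integration domains shrink with $\delta$.

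The main obstacle will be matching the exponents so that everything collapses onto the critical gradient square on $\Sigma_{\delta}^{v}$: one has to track the Stein--Weiss condition \eqref{in1} together with the scaling of $k,h,H,K$ imposed by the non-increasing hypothesis in Theorem \ref{app1}, and verify at each step that the H\"older triples sum exactly to one and that the Sobolev/trace exponent appears in the correct slot to permit the final absorption.
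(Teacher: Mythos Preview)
Your overall architecture is right (test against $(v-v_{\delta})^{+}$, split the interior and boundary nonlinearities into a ``monotone'' part and a ``difference'' part, then close with Sobolev/trace), but you have the two pieces wired to the wrong outputs and this creates a real gap.

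In the paper, and in fact necessarily, the \emph{difference} terms $(\omega-\omega_{\delta})v^{p}$ and $(z-z_{\delta})v^{q}$ are the ones that, after Lemma~\ref{la1} and Theorem~\ref{thm1}, produce the interior--boundary coupled factor $\|v\|_{L^{2n/(n-2)}(\Sigma_{\delta}^{v})}^{\cdots}\|v\|_{L^{2(n-1)/(n-2)}(\partial\Sigma_{\delta}^{v})}^{\cdots}$, i.e.\ the \emph{first} bracket term. The \emph{monotone} terms are handled by a direct H\"older estimate against $\omega$ and $z$ themselves, yielding the second and third bracket terms $\|\omega\|_{L^{2n/((2\beta+\mu)-2\lambda)}(\Sigma_{\delta}^{v})}\|v\|^{\cdots}$ and $\|z\|_{\cdots}\|v\|^{\cdots}$. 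You have this reversed, and the plan you describe for the monotone piece---substitute the integral representation of $\omega_{\delta},z_{\delta}$ and apply Stein--Weiss---does not land on the stated target: the boundary integral in $\omega_{\delta}$ runs over all of $\partial\mathbb{R}^{n}_{+}$, not $\partial\Sigma_{\delta}^{v}$, so the resulting norm is not small as $\delta$ varies and cannot be absorbed.

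A second, related issue is your algebraic splitting. You bound by $k(|x^{\delta}|^{n-2}v_{\delta})\bigl[\omega_{\delta}(v^{p}-v_{\delta}^{p})+(\omega-\omega_{\delta})v^{p}\bigr]$, which leaves $\omega_{\delta}$ (singular at $p^{\delta}\in\Sigma_{\delta}$) in the monotone piece, whereas the target inequality carries $\|\omega\|_{L^{\cdots}(\Sigma_{\delta}^{v})}$. The paper avoids this by splitting $\Sigma_{\delta}^{v}$ according to the sign of $\omega-\omega_{\delta}$: on $\{\omega>\omega_{\delta}\}$ one uses $\omega_{\delta}\leq\omega$ together with $k(|x|^{n-2}v)\leq k(|x^{\delta}|^{n-2}v_{\delta})$ to replace $\omega_{\delta}$ by $\omega$ and the larger $k$ by the smaller one; on $\{\omega\leq\omega_{\delta}\}$ the difference term is nonpositive and is simply dropped. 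This case distinction is what makes the direct H\"older step go through with $\omega$ (not $\omega_{\delta}$) in the estimate. Finally, the paper justifies the test function via a cut-off $\phi_{\varepsilon}$ around $p^{\delta}$ and near infinity, showing the error term $\int[(v-v_{\delta})^{+}]^{2}|\nabla\phi_{\varepsilon}|^{2}$ vanishes as $\varepsilon\to0$; your ``standard density'' remark should be replaced by this explicit computation.
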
	
\begin{proof}
	Actually, because of $\delta>0$, then there exists a parameter $r>0$ such that $\Sigma_{\delta}\subset\mathbb{R}^{n}_{+}\backslash B_{r}^{+}(0)$. Therefore, in light of the definition and decay property of $v(x)$, we have
	\begin{equation}\nonumber
		v(x),~(v-v_{\delta})^{+}\in L^{\frac{2n}{n-2}}(\Sigma_{\delta})\cup
		L^{\infty}(\Sigma_{\delta}),
	\end{equation}
	where we denote $B_{r}^{+}(0)=\left\{x\in\mathbb{R}^{n}_{+}| |x|<r\right\}$.
	
	Next, in order to remove the singularity of $v(x)$, $\omega(x)$ and $z(y)$, we need to introduce a cut-off function $\phi=\phi_{\varepsilon}(x)\in C^{1}(\mathbb{R}^{n}, \left[0,1\right])$ as below
	\begin{equation}\nonumber
		\phi_{\varepsilon}(x)=
		\left\{
		\begin{aligned}
			&1,\ \ 2\varepsilon\leq|x-p^{\delta}|\leq\frac{1}{\varepsilon},\\ 
			&0,\ \ |x-p^{\delta}|<\varepsilon,\ \  |x-p^{\delta}|>\frac{2}{\varepsilon}.
		\end{aligned}
		\right. 
	\end{equation}
	Furthermore, we require that $|\nabla \phi|\leq\frac{2}{\varepsilon}$ for $\varepsilon<|x-p^{\delta}|<2\varepsilon$ and $|\nabla\phi|\leq2\varepsilon$ for $\frac{1}{\varepsilon}<|x-p^{\delta}|<\frac{2}{\varepsilon}$. In addition, we also define two functions $\psi(x)$ and $\varphi(x)$ satisfies $\varphi=\varphi_{\varepsilon}=\phi_{\varepsilon}^{2}\left(v-v_{\delta}\right)^{+}$ and $\psi=\psi_{\varepsilon}=\phi_{\varepsilon}\left(v-v_{\delta}\right)^{+}$ respectively, it's easy to find that
	\begin{equation}\nonumber
		|\nabla\psi|^{2}=\nabla\left(v-v_{\delta}\right)^{+}\nabla\varphi+\left[\left(v-v_{\delta}\right)^{+}\right]^{2}|\nabla\phi|^{2}.
	\end{equation}
	
	Now, based on the above preparation, we deduce from \eqref{main2}
	\begin{equation}\label{peng1}
		\begin{aligned}
			&\int_{\Sigma_{\delta}\cup\left\{2\varepsilon\leq|x-p^{\delta}|\leq\frac{1}{\varepsilon}\right\}}|\nabla\left(v-v_{\delta}\right)^{+}|^{2}dx\\
			&\leq\int_{\Sigma_{\delta}^{v}}|\nabla\psi(x)|^{2}dx\leq\int_{\Sigma_{\delta}}\nabla\left(v(x)-v_{\delta}(x)\right)^{+}\nabla\varphi dx+\int_{\Sigma_{\delta}^{v}}\left[\left(v(x)-v_{\delta}(x)\right)^{+}\right]^{2}|\nabla\phi_{\varepsilon}|^{2}dx\\
			&=\int_{\Sigma_{\delta}^{v}}-\Delta\left(v-v_{\delta}\right)\varphi(x)dx+\int_{\partial\Sigma_{\delta}^{v}}\frac{\partial\left(v-v_{\delta}\right)(y)}{\partial\upsilon}\varphi(y)dy+\int_{\Sigma_{\delta}^{v}}\left[\left(v(x)-v_{\delta}(x)\right)^{+}\right]^{2}|\nabla\phi_{\varepsilon}|^{2}dx\\
			&=\int_{\Sigma_{\delta}^{v}}\left[x_{n}^{\lambda}\omega(x)k\left(|x|^{n-2}v(x)\right)v(x)^{\frac{2\lambda+n+2-(2\beta+\mu)}{n-2}}-x_{n}^{\lambda}\omega(x^{\delta})k\left(|x^{\delta}|^{n-2}v_{\delta}(x)\right)v_{\delta}(x)^{\frac{2\lambda+n+2-(2\beta+\mu)}{n-2}}\right.\\
			&\times\left.\left(v(x)-v_{\delta}(x)\right)^{+}\phi_{\varepsilon}^{2}(x)dx\right]-\int_{\partial\Sigma_{\delta}^{v}}\left[z(y)h\left(|y|^{n-2}v(y)\right)v(y)^{\frac{n-(2\alpha+\mu)}{n-2}}-z(y^{\delta})h\left(|y^{\delta}|^{n-2}v_{\delta}(y)\right)v_{\delta}(y)^{\frac{n-(2\alpha+\mu)}{n-2}}\right.\\
			&\left.\times\left(v(y)-v_{\delta}(y)\right)^{+}\phi_{\varepsilon}^{2}(y)dy\right]+\int_{\Sigma_{\delta}^{v}}\left[\left(v(x)-v_{\delta}(x)\right)^{+}\right]^{2}|\nabla\phi_{\varepsilon}|^{2}dx\\
			&:=I_{1}+I_{2}+I_{3}.
		\end{aligned}
	\end{equation}
	
	Apparently, we are going to consider the above three integrals. For the integrals $I_{1}$, if $x\in\Sigma_{\delta}^{v}$, then we have 
	\begin{equation}\nonumber
		k\left(|x|^{n-2}v(x)\right)\leq k\left(|x^{\delta}|^{n-2}v_{\delta}(x)\right),
	\end{equation}
	where we used the monotonicity of $k$. Meanwhile, we treat the domain $\Sigma_{\delta}^{v}$ as
	\begin{equation}\nonumber
		\Omega_{1}=\left\{x\in\Sigma_{\delta}^{v}|\omega(x)>\omega_{\delta}(x)\right\}
	\end{equation}
	and
	\begin{equation}\nonumber
		\Omega_{2}=\left\{x\in\Sigma_{\delta}^{v}|\omega(x)\leq\omega_{\delta}(x)\right\}.
	\end{equation}
	If $x\in\Omega_{1}$, we know
	\begin{equation}\label{ng1}
		\begin{aligned}
			x_{n}^{\lambda}&\omega(x)k\left(|x|^{n-2}v(x)\right)v(x)^{\frac{2\lambda+n+2-(2\beta+\mu)}{n-2}}-x_{n}^{\lambda}\omega(x^{\delta})k\left(|x^{\delta}|^{n-2}v_{\delta}(x)\right)v_{\delta}(x)^{\frac{2\lambda+n+2-(2\beta+\mu)}{n-2}}\\
			&=\left[\omega(x)-\omega(x^{\delta})\right]k\left(|x|^{n-2}v(x)\right)v(x)^{\frac{2\lambda+n+2-(2\beta+\mu)}{n-2}}\\
			&\quad+x_{n}^{\lambda}\omega(x^{\delta})\left[k\left(|x|^{n-2}v(x)\right)v(x)^{\frac{2\lambda+n+2-(2\beta+\mu)}{n-2}}-k\left(|x^{\delta}|^{n-2}v_{\delta}(x)\right)v_{\delta}(x)^{\frac{2\lambda+n+2-(2\beta+\mu)}{n-2}}\right]\\
			&\leq x_{n}^{\lambda}\left[\omega(x)-\omega(x^{\delta})\right]k\left(|x|^{n-2}v(x)\right)v(x)^{\frac{2\lambda+n+2-(2\beta+\mu)}{n-2}}\\
			&\quad+x_{n}^{\lambda}\omega(x)k\left(|x|^{n-2}v(x)\right)\left[v(x)^{\frac{2\lambda+n+2-(2\beta+\mu)}{n-2}}-v_{\delta}(x)^{\frac{2\lambda+n+2-(2\beta+\mu)}{n-2}}\right].
		\end{aligned}
	\end{equation}
	While, if $x\in\Omega_{2}$, then we have
	\begin{equation}\label{ng2}
		\begin{aligned}
			x_{n}^{\lambda}&\omega(x)k\left(|x|^{n-2}v(x)\right)v(x)^{\frac{2\lambda+n+2-(2\beta+\mu)}{n-2}}-x_{n}^{\lambda}\omega(x^{\delta})k\left(|x^{\delta}|^{n-2}v_{\delta}(x)\right)v_{\delta}(x)^{\frac{2\lambda+n+2-(2\beta+\mu)}{n-2}}\\
			&\leq x_{n}^{\lambda}\omega(x)\left[k\left(|x|^{n-2}v(x)\right)v(x)^{\frac{2\lambda+n+2-(2\beta+\mu)}{n-2}}-k\left(|x^{\delta}|^{n-2}v_{\delta}(x)\right)v_{\delta}(x)^{\frac{2\lambda+n+2-(2\beta+\mu)}{n-2}}\right]\\
			&\leq x_{n}^{\lambda}\omega(x)k\left(|x|^{n-2}v(x)\right)\left[v(x)^{\frac{2\lambda+n+2-(2\beta+\mu)}{n-2}}-v_{\delta}(x)^{\frac{2\lambda+n+2-(2\beta+\mu)}{n-2}}\right].
		\end{aligned}
	\end{equation}
	
	Clearly, the integral $I_{2}$ can be estimate as we did in the previous calculation. Therefore, for $y\in\partial\Sigma_{\delta}^{v}$, it follow from the monotonicity of $h$ that,
	\begin{equation}\nonumber
		h\left(|y|^{n-2}v(y)\right)\leq h\left(|y^{\delta}|^{n-2}v_{\delta}(y)\right).
	\end{equation}
	In a similar way, we can divide the domain $\partial\Sigma_{\delta}^{v}$ into 
	\begin{equation}\nonumber
		\Omega_{3}=\left\{y\in\partial\Sigma_{\delta}^{v}|z(y)>z_{\delta}(y)\right\}
	\end{equation}
	and
	\begin{equation}\nonumber
		\Omega_{4}=\left\{y\in\partial\Sigma_{\delta}^{v}|z(y)\leq z_{\delta}(y)\right\}.
	\end{equation}
If $y\in\Omega_{3}$, then we have
	\begin{equation}\label{ng3}
		\begin{aligned}
			&z(y)h\left(|y|^{n-2}v(y)\right)v(y)^{\frac{n-(2\alpha+\mu)}{n-2}}-z(y^{\delta})h\left(|y^{\delta}|^{n-2}v_{\delta}(y)\right)v_{\delta}(y)^{\frac{n-(2\alpha+\mu)}{n-2}}\\
			&\leq z(y)h\left(|y|^{n-2}v(y)\right)\left[v(y)^{\frac{n-(2\alpha+\mu)}{n-2}}-v_{\delta}(y)^{\frac{n-(2\alpha+\mu)}{n-2}}\right]+h\left(|y|^{n-2}v(y)\right)v(y)^{\frac{n-(2\alpha+\mu)}{n-2}}\left[z(y)-z(y^{\delta})\right],
		\end{aligned}
	\end{equation}
	it also holds that for $y\in\Omega_{4}$,
	\begin{equation}\label{ng4}
		\begin{aligned}
			z(y)&h\left(|y|^{n-2}v(y)\right)v(y)^{\frac{n-(2\alpha+\mu)}{n-2}}-z(y^{\delta})h\left(|y^{\delta}|^{n-2}v_{\delta}(y)\right)v_{\delta}(y)^{\frac{n-(2\alpha+\mu)}{n-2}}\\
			&\leq z(y)h\left(|y|^{n-2}v(y)\right)\left[v(y)^{\frac{n-(2\alpha+\mu)}{n-2}}-v_{\delta}(y)^{\frac{n-(2\alpha+\mu)}{n-2}}\right].
		\end{aligned}
	\end{equation}
	
	Consequently, inserting \eqref{ng1}, \eqref{ng2},  \eqref{ng3} and \eqref{ng4} into \eqref{peng1}, we must have
	\begin{equation}\label{peng2}
		\begin{aligned}
			&\int_{\Sigma_{\delta}\cup\left\{2\varepsilon\leq|x-p^{\delta}|\leq\frac{1}{\varepsilon}\right\}}|\nabla\left(v-v_{\delta}\right)^{+}|^{2}dx\\
			&\leq I_{3}+C\int_{\Sigma_{\delta}^{v}}x_{n}^{\lambda}\left[\omega(x)-\omega(x^{\delta})\right]k\left(|x|^{n-2}v(x)\right)v(x)^{\frac{2\lambda+n+2-(2\beta+\mu)}{n-2}}\left(v(x)-v_{\delta}(x)\right)^{+}\phi_{\varepsilon}^{2}(x)dx\\
			&+C\int_{\Sigma_{\delta}^{v}}x_{n}^{\lambda}\omega(x)k\left(|x|^{n-2}v(x)\right)v(x)^{\frac{2\lambda+4-(2\beta+\mu)}{n-2}}\left[\left(v(x)-v_{\delta}(x)\right)^{+}\right]^{2}\phi_{\varepsilon}^{2}(x)dx\\
			&+C\int_{\partial\Sigma_{\delta}^{v}}z(y)h\left(|y|^{n-2}v(y)\right)v(y)^{\frac{2-(2\alpha+\mu)}{n-2}}\left[\left(v(y)-v_{\delta}(y)\right)^{+}\right]^{2}\phi_{\varepsilon}^{2}(y)dy\\
			&+C\int_{\partial\Sigma_{\delta}^{v}}h\left(|y|^{n-2}v(y)\right)v(y)^{\frac{n-(2\alpha+\mu)}{n-2}}\left[z(y)-z(y^{\delta})\right]\left(v(y)-v_{\delta}(y)\right)^{+}\phi_{\varepsilon}^{2}(y)dy\\
			&=I_{3}+A_{1}+A_{2}+A_{3}+A_{4}.
		\end{aligned}
	\end{equation}
	
	In fact, based on the above discussion, we focus on estimating the five integrals in the remaining of proof.
	
	Firstly, to estimate the integrals $I_{3}$ accurately, we write $M_{1}=\left\{x\in\Sigma_{\delta}|\varepsilon<|x-p^{\delta}|<2\varepsilon\right\}$ and $M_{2}=\left\{x\in\Sigma_{\delta}|\frac{1}{\varepsilon}<|x-p^{\delta}|<\frac{2}{\varepsilon}\right\}$, then we have
	\begin{equation}\nonumber
		\int_{M_{1}}|\nabla \phi|^{n}dx\leq C\frac{1}{\varepsilon^{n}}\cdot\varepsilon^{n}=C.
	\end{equation}
	Similarly, 
	\begin{equation}\nonumber
		\int_{M_{2}}|\nabla \phi|^{n}dx\leq C\frac{1}{\varepsilon^{n}}\cdot\varepsilon^{n}=C.
	\end{equation}
	As a consequence, when $\varepsilon\rightarrow0$, by the H\"{o}lder inequality and $(v-v_{\delta})^{+}\in L^{\frac{2n}{n-2}}(\Sigma_{\delta})$,  we conclude
	\begin{equation}\nonumber
		I_{3}\leq\left(\int_{M_{1}\subset M_{2}}\left[(v-v_{\delta})^{+}\right]^{\frac{2n}{n-2}}dx\right)^{\frac{n-2}{n}}\left(\int_{\Sigma_{\delta}}|\nabla\phi|^{n}dx\right)^{\frac{2}{n}}\rightarrow0.
	\end{equation}
	
	Secondly, for the integral $A_{1}$, according to Theorem \ref{thm1}, we infer from \eqref{pe1} and the H\"{o}lder inequality that
	\begin{equation}\label{yi1}
		\begin{aligned}
			A_{1}&\leq C_{\delta}\int_{\Sigma_{\delta}^{v}}x_{n}^{\lambda}\left[\omega(x)-\omega(x^{\delta})\right]v(x)^{\frac{2\lambda+n+2-(2\beta+\mu)}{n-2}}\left(v(x)-v_{\delta}(x)\right)^{+}dx\\
			&\leq C_{\delta}\int_{\Sigma_{\delta}^{v}}\int_{\partial\Sigma_{\delta}^{v}}\frac{x_{n}^{\lambda}v(x)^{\frac{2\lambda+n+2-(2\beta+\mu)}{n-2}}\left(v(x)-v_{\delta}(x)\right)^{+}\left(v(y)^{\frac{2(n-1)-(2\alpha+\mu)}{n-2}}-v_{\delta}(y)^{\frac{2(n-1)-(2\alpha+\mu)}{n-2}}\right)}{|x|^{\beta}|x-y|^{\mu}|y|^{\alpha}}dxdy\\
			&\leq C_{\delta}\|v(x)^{\frac{2\lambda+n+2-(2\beta+\mu)}{n-2}}\left(v(x)-v_{\delta}(x)\right)^{+}\|_{L^{\frac{2n}{2n+2\lambda-(2\beta+\mu)}}(\Sigma_{\delta}^{v})}\|v(y)^{\frac{n-(2\alpha+\mu)}{n-2}}\left(v(y)-v_{\delta}(y)\right)^{+}\|_{L^{\frac{2(n-1)}{2n-2-(2\alpha+\mu)}}(\partial\Sigma_{\delta}^{v})}\\
			&\leq C_{\delta}\|v(x)\|_{L^{\frac{2n}{n-2}}(\Sigma_{\delta}^{v})}^{\frac{2\lambda+n+2-(2\beta+\mu)}{n-2}}\|\left(v(x)-v_{\delta}(x)\right)\|_{L^{\frac{2n}{n-2}}(\Sigma_{\delta}^{v})}\|v(y)\|_{L^{\frac{2(n-1)}{n-2}}(\partial\Sigma_{\delta}^{v})}^{\frac{n-(2\alpha+\mu)}{n-2}}\|\left(v(y)-v_{\delta}(y)\right)\|_{L^{\frac{2(n-1)}{n-2}}(\partial\Sigma_{\delta}^{v})}.
		\end{aligned}
	\end{equation}

	Next, for the integral $A_{2}$, combining the H\"{o}lder inequality with the decay estimate of $v$, there exists a positive constant $C_{\delta}$, which is non-increasing in $\delta$, such that
	\begin{equation}\label{yi2}
		\begin{aligned}
			A_{2}&\leq \int_{\Sigma_{\delta}^{v}}x_{n}^{\lambda}\left[\omega(x)-\omega(x^{\delta})\right]k\left(|x|^{n-2}v(x)\right)v(x)^{\frac{2\lambda+n+2-(2\beta+\mu)}{n-2}}\left(v(x)-v_{\delta}(x)\right)^{+}dx\\
			&\leq C_{\delta}\left(\int_{\Sigma_{\delta}^{v}}\omega(x)^{\frac{2n}{(2\beta+\mu)-2\lambda}}dx\right)^{\frac{(2\beta+\mu)-2\lambda}{2n}}\left(\int_{\Sigma_{\delta}^{v}}v(x)^{\frac{2n}{n-2}}dx\right)^{\frac{2\lambda+4-(2\beta+\mu)}{2n}}\left(\int_{\Sigma_{\delta}^{v}}\left[\left(v(x)-v_{\delta}(x)\right)^{+}\right]^{\frac{2n}{n-2}}dx\right)^{\frac{n-2}{n}},
		\end{aligned}
	\end{equation}
	where we used the monotonicity of $k$.
	
	Furthermore, for the integral $A_{3}$, in view of the H\"{o}lder inequality, we have
	\begin{equation}\label{yi3}
		\begin{aligned}
			A_{3}&\leq \int_{\partial\Sigma_{\delta}^{v}}z(y)h\left(|y|^{n-2}v(y)\right)v(y)^{\frac{2-(2\alpha+\mu)}{n-2}}\left[\left(v(y)-v_{\delta}(y)\right)^{+}\right]^{2}dy\\
			&\leq C_{\delta}\left(\int_{\partial\Sigma_{\delta}^{v}}v(y)^{\frac{2(n-1)}{n-2}}dy\right)^{\frac{2-(2\alpha+\mu)}{2(n-1)}}\left(\int_{\partial\Sigma_{\delta}^{v}}z(y)^{\frac{2(n-1)}{2\alpha+\mu}}dy\right)^{\frac{2\alpha+\mu}{2(n-1)}}\left(\int_{\partial\Sigma_{\delta}^{v}}\left[\left(v(y)-v_{\delta}(y)\right)^{+}\right]^{\frac{2(n-1)}{n-2}}dy\right)^{\frac{n-2}{n-1}}.
		\end{aligned}
	\end{equation}
	
	Finally, we estimate the integral $A_{4}$. According to Theorem \ref{thm1}, it follow from \eqref{pe2} and the H\"{o}lder inequality that
	\begin{equation}\label{yi4}
		\begin{aligned}
			A_{4}&\leq C_{\delta}\int_{\partial\Sigma_{\delta}^{v}}v(y)^{\frac{n-(2\alpha+\mu)}{n-2}}\left[z(y)-z(y^{\delta})\right]\left(v(y)-v_{\delta}(y)\right)^{+}dy\\
			&\leq C_{\delta}\int_{\Sigma_{\delta}^{v}}\int_{\partial\Sigma_{\delta}^{v}}\frac{x_{n}^{\lambda}v(y)^{\frac{n-(2\alpha+\mu)}{n-2}}\left(v(y)-v_{\delta}(y)\right)^{+}v(x)^{\frac{n+2+2\lambda-(2\beta+\mu)}{n-2}}\left(v(x)-v_{\delta}(x)\right)^{+}}{|x|^{\beta}|x-y|^{\mu}|y|^{\alpha}}dxdy\\
			&\leq C_{\delta}\|v(y)^{\frac{n-(2\alpha+\mu)}{n-2}}\left(v(y)-v_{\delta}(y)\right)^{+}\|_{L^{\frac{2(n-1)}{2n-2-(2\alpha+\mu)}}(\partial\Sigma_{\delta}^{v})}\|v(x)^{\frac{n+2+2\lambda-(2\beta+\mu)}{n-2}}\left(v(x)-v_{\delta}(x)\right)^{+}\|_{L^{\frac{2n}{2n+2\lambda-(2\beta+\mu)}}(\Sigma_{\delta}^{v})}\\
			&\leq C_{\delta}\|v(x)\|_{L^{\frac{2n}{n-2}}(\Sigma_{\delta}^{v})}^{\frac{2\lambda+n+2-(2\beta+\mu)}{n-2}}\|\left(v(x)-v_{\delta}(x)\right)\|_{L^{\frac{2n}{n-2}}(\Sigma_{\delta}^{v})}\|v(y)\|_{L^{\frac{2(n-1)}{n-2}}(\partial\Sigma_{\delta}^{v})}^{\frac{n-(2\alpha+\mu)}{n-2}}\|\left(v(y)-v_{\delta}(y)\right)\|_{L^{\frac{2(n-1)}{n-2}}(\partial\Sigma_{\delta}^{v})}.
		\end{aligned}
	\end{equation}
	
	Hence, plugging \eqref{yi1}, \eqref{yi2}, \eqref{yi3} and \eqref{yi4} into \eqref{peng2} and taking $\varepsilon\rightarrow0$, then by Lebesgue’s dominated convergence theorem, Sobolev trace inequality and Sobolev inequality, it holds that
	\begin{equation}\nonumber
		\begin{aligned}
			&\int_{\Sigma_{\delta}}|\nabla\left(v-v_{\delta}\right)^{+}|^{2}dx\\
			&\leq C_{\delta}\left[\|v(x)\|_{L^{\frac{2n}{n-2}}(\Sigma_{\delta}^{v})}^{\frac{2\lambda+n+2-(2\beta+\mu)}{n-2}}\|v(y)\|_{L^{\frac{2(n-1)}{n-2}}(\partial\Sigma_{\delta}^{v})}^{\frac{n-(2\alpha+\mu)}{n-2}}+\left(\int_{\Sigma_{\delta}^{v}}\omega(x)^{\frac{2n}{(2\beta+\mu)-2\lambda}}dx\right)^{\frac{(2\beta+\mu)-2\lambda}{2n}}\left(\int_{\Sigma_{\delta}^{v}}v(x)^{\frac{2n}{n-2}}dx\right)^{\frac{2\lambda+4-(2\beta+\mu)}{2n}}\right.\\
			&~~~\left.+\left(\int_{\partial\Sigma_{\delta}^{v}}v(y)^{\frac{2(n-1)}{n-2}}dy\right)^{\frac{2-(2\alpha+\mu)}{2(n-1)}}\left(\int_{\partial\Sigma_{\delta}^{v}}z(y)^{\frac{2(n-1)}{2\alpha+\mu}}dy\right)^{\frac{2\alpha+\mu}{2(n-1)}}\right]\cdot\left(\int_{\Sigma_{\delta}^{v}}|\nabla\left(v-v_{\delta}\right)^{+}|^{2}dx\right).
		\end{aligned}
	\end{equation}
	which leads to \eqref{pe3}. This accomplishes the proof.
\end{proof}
Next, based on Lemma \ref{la1} and Lemma \ref{la2}, we give the following frequently useful lemma.	
\begin{lem}
	Under the hypothesis of Theorem \ref{app1}, there exists a non-negative constant $\delta_{0}$, such that for any $\delta\geq\delta_{0}$, $x\in\Sigma_{\delta}$ and $y\in\partial\Sigma_{\delta}$, we have
	\begin{equation}\label{dai1}
		\omega(x)\leq\omega(x^{\delta}),\ \ z(y)\leq z(y^{\delta}),\ \ v(x)\leq v_{\delta}(x).
	\end{equation} 
\end{lem}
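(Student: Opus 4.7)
The plan is to use the differential inequality \eqref{pe3} of Lemma \ref{la2} together with a smallness argument based on the decay of $v$, $\omega$, $z$ at infinity. Concretely, I would show that for $\delta$ large enough the bracket on the right-hand side of \eqref{pe3} is bounded by $1/(2C_\delta)$, so that \eqref{pe3} forces $\int_{\Sigma_\delta}|\nabla(v-v_\delta)^+|^2\,dx = 0$, yielding $v(x)\leq v_\delta(x)$ on $\Sigma_\delta$. The remaining inequalities for $\omega$ and $z$ will then follow immediately by substituting $\Sigma_\delta^v = \partial\Sigma_\delta^v = \emptyset$ (modulo null sets) into the integral representations \eqref{pen1} and \eqref{pen2} of Lemma \ref{la1}, whose right-hand sides become non-positive.

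The key technical step is the smallness of the four factors appearing in the bracket of \eqref{pe3}, which are $L^p$-norms of $v$, $\omega$ and $z$ over $\Sigma_\delta^v \subset \Sigma_\delta$ and $\partial\Sigma_\delta^v \subset \partial\Sigma_\delta$. Since $\Sigma_\delta \subset \mathbb{R}^n_+ \setminus B^+_\delta(0)$ for $\delta>0$, I would exploit the Kelvin-transform decay estimates $v(x)\leq C|x|^{-(n-2)}$, $\omega(x)\leq C|x|^{-(2\beta+\mu)}$ and $z(y)\leq C|y|^{-(2\alpha+\mu)}$ for $|x|,|y|$ large, recorded just before \eqref{main2}. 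A direct verification based on the constraints $0<\mu<n-2\lambda$ and $0<\alpha,\beta<n$ shows that each of the exponents $\tfrac{2n}{n-2}$, $\tfrac{2(n-1)}{n-2}$, $\tfrac{2n}{(2\beta+\mu)-2\lambda}$ and $\tfrac{2(n-1)}{2\alpha+\mu}$, combined with the corresponding decay rate, produces an integrand of the form $|x|^{-\gamma}$ with $\gamma>n$ in the bulk (respectively $\gamma>n-1$ on the boundary). Absolute continuity of the integral then forces each of these $L^p$-norms to tend to $0$ as $\delta\to\infty$.

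Combining the two steps, there exists $\delta_0>0$ such that for every $\delta\geq\delta_0$ the coefficient on the right-hand side of \eqref{pe3} is at most $1/2$. Since $\nabla(v-v_\delta)^+$ is supported in $\overline{\Sigma_\delta^v}$, one has $\int_{\Sigma_\delta}|\nabla(v-v_\delta)^+|^2\,dx = \int_{\Sigma_\delta^v}|\nabla(v-v_\delta)^+|^2\,dx$, and \eqref{pe3} upgrades to
\begin{equation*}
\int_{\Sigma_\delta}|\nabla(v-v_\delta)^+|^2\,dx \leq \tfrac{1}{2}\int_{\Sigma_\delta}|\nabla(v-v_\delta)^+|^2\,dx.
\end{equation*}
The finiteness of the left-hand side, which follows from $v\in W^{1,2}_{\mathrm{loc}}(\mathbb{R}^n_+)$ combined with the decay at infinity, forces both sides to vanish, yielding $v\leq v_\delta$ on $\Sigma_\delta$; Lemma \ref{la1} then gives $\omega(x)\leq\omega(x^\delta)$ and $z(y)\leq z(y^\delta)$ on $\Sigma_\delta$ and $\partial\Sigma_\delta$, respectively. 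The main obstacle will be the careful bookkeeping in the smallness estimate of the four norm factors under the precise parameter constraints, together with the verification that $\int_{\Sigma_{\delta_0}}|\nabla(v-v_{\delta_0})^+|^2\,dx$ is finite so that the absorption step is legitimate; the non-increasing dependence of $C_\delta$ on $\delta$ provided by Lemma \ref{la2} is what allows the choice of $\delta_0$ to work uniformly for all larger $\delta$.
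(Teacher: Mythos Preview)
Your proposal is correct and follows the same approach as the paper: use the decay of $v$, $\omega$, $z$ at infinity to make the coefficient $C_\delta[\cdots]$ in \eqref{pe3} at most $1/2$, absorb to obtain $v\leq v_\delta$ on $\Sigma_\delta$, and then invoke Lemma~\ref{la1} to deduce the inequalities for $\omega$ and $z$. Your write-up is in fact more detailed than the paper's brief proof, explicitly checking the integrability exponents, noting the role of the monotonicity of $C_\delta$ in $\delta$, and addressing the finiteness needed for the absorption step.
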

\begin{proof}
	Actually, since $\omega(x)$, $z(y)$ and $v(x)$ possess better decay properties, then we can take $\delta_{0}$ sufficiently large, such that for $\delta>\delta_{0}$, it holds that
	\begin{equation}\nonumber
		\begin{aligned}
			C_{\delta}&\left(\|v(x)\|_{L^{\frac{2n}{n-2}}(\Sigma_{\delta}^{v})}^{\frac{2\lambda+n+2-(2\beta+\mu)}{n-2}}\|v(y)\|_{L^{\frac{2(n-1)}{n-2}}(\partial\Sigma_{\delta}^{v})}^{\frac{n-(2\alpha+\mu)}{n-2}}+\|\omega(x)\|_{L^{\frac{2n}{(2\beta+\mu)-2\lambda}}(\Sigma_{\delta}^{v})}\|v(x)\|^{\frac{2\lambda+4-(2\beta+\mu)}{2n}}_{L^{\frac{2n}{n-2}}(\Sigma_{\delta}^{v})}\right.\\
			&\left.+\|z(y)\|_{L^{\frac{2(n-1)}{n-2}}(\partial\Sigma_{\delta}^{v})}\|v(y)\|^{\frac{2-(2\alpha+\mu)}{2(n-1)}}_{L^{\frac{2(n-1)}{n-2}}(\partial\Sigma_{\delta}^{v})}\right)\leq \frac{1}{2}.
		\end{aligned}
	\end{equation}
	Further, according to Lemma \ref{la1} and Lemma \ref{la2}, for any $x\in\Sigma_{\delta}$ and $y\in\partial\Sigma_{\delta}$, we obtain $\omega(x)\leq\omega(x^{\delta})$, $z(y)\leq z(y^{\delta})$ and $v(x)\leq v_{\delta}(x)$.
\end{proof}

It should be noted that, the above analysis provides the starting point to move the plane $T_{\delta_{0}}$. Now we are ready to move the plane from the right to the left provided \eqref{dai1}. To state the process accurately, we write
\begin{equation}\label{pe4}
	\delta_{1}=\inf\left\{\delta|\omega(x)\leq\omega(x^{\delta}),~ z(y)\leq z(y^{\delta}),~ v(x)
	\leq v_{\delta}(x),~\forall x\in\Sigma_{\delta},~ \forall y\in\partial\Sigma_{\delta}\right\}.
\end{equation}

Then we deduce the following important result.
\begin{lem}\label{dai2}
	If $\delta_{1}>0$, then for any $x\in\Sigma_{\delta_{1}}$ and $y\in\partial\Sigma_{\delta_{1}}$, it holds that
	$\omega(x)\equiv\omega(x^{\delta_{1}})$, $z(y)\equiv z(y^{\delta_{1}})$ and $v(x)\equiv v_{\delta_{1}}(x)$.
\end{lem}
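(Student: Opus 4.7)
The plan is to argue by contradiction: assume $\delta_1>0$ and that at least one of the three identities fails on the slab $\Sigma_{\delta_1}$ (respectively $\partial\Sigma_{\delta_1}$). I will show that, in that case, the moving plane can be slid strictly past $\delta_1$, contradicting the infimum definition \eqref{pe4}. By continuity of $v,\omega,z$ with respect to the reflection parameter and passage to the limit $\delta\downarrow\delta_1$ in \eqref{pe4}, the weak inequalities $v(x)\leq v_{\delta_1}(x)$, $\omega(x)\leq\omega(x^{\delta_1})$ and $z(y)\leq z(y^{\delta_1})$ hold throughout $\Sigma_{\delta_1}$ and $\partial\Sigma_{\delta_1}$; so the assumption to be contradicted is that one of these inequalities is strict on a set of positive measure.

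Next I would upgrade the failure of equality to a strict pointwise inequality. Suppose for instance that $v\not\equiv v_{\delta_1}$ on $\Sigma_{\delta_1}$. Plugging $v\leq v_{\delta_1}$ and the monotonicity of $H,K$ into the representation identities \eqref{pen1} and \eqref{pen2} of Lemma \ref{la1}, and using that for $x,y\in\Sigma_{\delta_1}$ one has $|x-y|<|x^{\delta_1}-y|$ (so the kernel difference in the right-hand side is strictly positive), one concludes $\omega(x)<\omega(x^{\delta_1})$ and $z(y)<z(y^{\delta_1})$ strictly in the interior. Feeding these strict inequalities back into the elliptic equation and boundary condition in \eqref{main2} together with a strong maximum principle (or, integrally, repeating the argument for the Kelvin-transformed integral equations obtained from $-\Delta$) then yields $v(x)<v_{\delta_1}(x)$ strictly on $\Sigma_{\delta_1}$. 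Thus, in the contradiction scenario, \emph{all three} critical inequalities are strict at $\delta=\delta_1$.

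The third step is the sliding step. For $\varepsilon>0$ small, consider $\delta=\delta_1-\varepsilon$. The newly opened strip $\Sigma_{\delta_1-\varepsilon}\setminus\Sigma_{\delta_1}$ has measure $O(\varepsilon)$, while on the old slab $\Sigma_{\delta_1}$ the strict inequality $v<v_{\delta_1}$ together with continuity in $\delta$ forces the violation set $\Sigma_{\delta_1-\varepsilon}^{v}\cap\Sigma_{\delta_1}$ to shrink to a null set as $\varepsilon\to 0^+$; analogous statements hold for $\partial\Sigma_{\delta_1-\varepsilon}^{v}$ and for the violation sets of $\omega$ and $z$. By absolute continuity of the Lebesgue integral, the bracketed quantity in the estimate \eqref{pe3} of Lemma \ref{la2}, namely
\[
\|v\|_{L^{\frac{2n}{n-2}}(\Sigma_{\delta}^{v})}^{\frac{2\lambda+n+2-(2\beta+\mu)}{n-2}}\|v\|_{L^{\frac{2(n-1)}{n-2}}(\partial\Sigma_{\delta}^{v})}^{\frac{n-(2\alpha+\mu)}{n-2}}+\|\omega\|_{L^{\frac{2n}{(2\beta+\mu)-2\lambda}}(\Sigma_{\delta}^{v})}\|v\|^{\frac{2\lambda+4-(2\beta+\mu)}{2n}}_{L^{\frac{2n}{n-2}}(\Sigma_{\delta}^{v})}+\|z\|_{L^{\frac{2(n-1)}{n-2}}(\partial\Sigma_{\delta}^{v})}\|v\|^{\frac{2-(2\alpha+\mu)}{2(n-1)}}_{L^{\frac{2(n-1)}{n-2}}(\partial\Sigma_{\delta}^{v})},
\]
can be made smaller than $1/(2C_{\delta})$ for all $\delta\in[\delta_1-\varepsilon_0,\delta_1)$, where $C_\delta$ is non-increasing and hence uniformly bounded on this interval. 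Inserting this into \eqref{pe3} yields $\nabla(v-v_\delta)^+\equiv 0$, hence $v\leq v_\delta$ on $\Sigma_\delta$; combining with Lemma \ref{la1} also gives $\omega(x)\leq\omega(x^{\delta})$ and $z(y)\leq z(y^{\delta})$. This contradicts the definition of $\delta_1$ in \eqref{pe4}.

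The main obstacle I expect is the third step: one must carefully justify that the violation sets $\Sigma_{\delta}^{v}$ and $\partial\Sigma_{\delta}^{v}$ really do shrink in measure as $\delta\uparrow\delta_1$, and that $C_\delta$ remains under uniform control. This is where the strict inequalities obtained in the second step are crucial, together with the fact (already established earlier) that $v,\omega,z$ enjoy decay away from the singular points of the Kelvin transform, which guarantees the relevant $L^p$ norms are absolutely continuous with respect to domain. Once these measure-shrinking estimates are in place, the contraction in \eqref{pe3} is automatic, and the contradiction closes.
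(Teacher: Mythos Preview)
Your proposal is correct and follows essentially the same moving-plane contradiction scheme as the paper: pass to the limit to obtain the weak inequalities at $\delta_1$, upgrade to strict inequalities via the strong maximum principle, then show the violation sets $\Sigma_\delta^v,\partial\Sigma_\delta^v$ shrink as $\delta\uparrow\delta_1$ so that the bracketed coefficient in \eqref{pe3} drops below $1/(2C_\delta)$, forcing $(v-v_\delta)^+\equiv 0$ and contradicting \eqref{pe4}. The only cosmetic difference is that the paper obtains the strict inequality for $v$ directly from the differential inequality $-\Delta v\leq -\Delta v_{\delta_1}$ (using the monotonicity of $g$ and $k$) and then reads off the strict inequalities for $\omega,z$, whereas you first derive $\omega<\omega_{\delta_1}$, $z<z_{\delta_1}$ from the integral representations and then feed these into the equation; both orderings are valid and lead to the same conclusion.
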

\begin{proof}
	Suppose that $\omega(x)\not\equiv\omega(x^{\delta_{1}})$, $z(y)\not\equiv z(y^{\delta_{1}})$ and $v(x)\not\equiv v_{\delta_{1}}(x)$. On one hand, by means of the continuity of $\omega(x)$, $v(x)$ and $z(y)$, for any $x\in\Sigma_{\delta_{1}}$ and $y\in\partial\Sigma_{\delta_{1}}$, we have 
	\begin{equation}
		\omega(x)\leq\omega(x^{\delta_{1}}),~ v(x)\leq v_{\delta_{1}}(x),~z(y)\leq z(y^{\delta_{1}}).
	\end{equation}
	
	Moreover, from the monotonicity of $g$ and $k$, we know
	\begin{equation}\nonumber
		\begin{aligned}
			x_{n}^{\lambda}\omega(x)k\left(|x|^{n-2}v(x)\right)v(x)^{\frac{2\lambda+n+2-(2\beta+\mu)}{n-2}}&=x_{n}^{\lambda}\omega(x)\frac{g\left(|x|^{n-2}v(x)\right)}{|x|^{2\lambda+n+2-(2\beta+\mu)}}\\
			&\leq x_{n}^{\lambda}\omega(x^{\delta_{1}})\frac{g\left(|x|^{n-2}v_{\delta_{1}}(x)\right)}{|x|^{2\lambda+n+2-(2\beta+\mu)}}\\
			&\leq x_{n}^{\lambda}\omega(x^{\delta_{1}})\frac{g\left(|x|^{n-2}v_{\delta_{1}}(x)\right)}{\left(|x|^{n-2}v_{\delta_{1}}(x)\right)^{\frac{2\lambda+n+2-(2\beta+\mu)}{n-2}}}v_{\delta}(x)^{\frac{2\lambda+n+2-(2\beta+\mu)}{n-2}}\\
			&\leq x_{n}^{\lambda}\omega(x^{\delta_{1}})\frac{g\left(|x|^{n-2}v_{\delta_{1}}(x)\right)}{\left(|x^{\delta_{1}}|^{n-2}v_{\delta_{1}}(x)\right)^{\frac{2\lambda+n+2-(2\beta+\mu)}{n-2}}}v_{\delta}(x)^{\frac{2\lambda+n+2-(2\beta+\mu)}{n-2}}\\
			&=x_{n}^{\lambda}\omega(x^{\delta_{1}})k\left(|x^{\delta_{1}}|^{n-2}v_{\delta_{1}}(x)\right)v_{\delta_{1}}(x)^{\frac{2\lambda+n+2-(2\beta+\mu)}{n-2}},
		\end{aligned}
	\end{equation}
	which immediately implies that
	\begin{equation}\nonumber
		-\Delta v(x)\leq -\Delta v(x^{\delta_{1}}),\ \ x\in\Sigma_{\delta_{1}}.
	\end{equation}
	Furthermore, according to the strong maximum principle, for any $x\in\Sigma_{\delta_{1}}$ and $y\in\partial\Sigma_{\delta_{1}}$, we obtain $\omega(x)<\omega(x^{\delta_{1}})$, $v(x)<v_{\delta_{1}}(x)$ and $z(y)<z(y^{\delta_{1}})$.
	
	On the other hand, when $\delta\rightarrow\delta_{1}$, we get $\frac{1}{|x|^{2n}}\chi_{\Sigma_{\delta}^{v}}\stackrel{a.e.}{\longrightarrow}0$ and $\frac{1}{|y|^{2(n-1)}}\chi_{\partial\Sigma_{\delta}^{v}}\stackrel{a.e.}{\longrightarrow}0$. Therefore, there exists $\tau>0$ such that for $\delta\in\left[\delta_{1}-\tau,\delta_{1}\right]$, $\frac{1}{|x|^{2n}}\chi_{\Sigma_{\delta}^{v}}\leq \frac{1}{|x|^{2n}}\chi_{\Sigma_{\delta_{1}-\tau}^{v}}$ and $\frac{1}{|y|^{2(n-1)}}\chi_{\partial\Sigma_{\delta}^{v}}\leq \frac{1}{|y|^{2(n-1)}}\chi_{\partial\Sigma_{\delta_{1}-\tau}^{v}}$. Here, applying Lebesgue’s dominated convergence theorem, when $\delta\rightarrow\delta_{1}$, we know
	\begin{equation}\nonumber
		\int_{\partial\Sigma_{\delta}^{v}}\frac{1}{|x|^{2n}}dx\rightarrow0,\ \ ~~~~ \int_{\partial\Sigma_{\delta}^{v}}\frac{1}{|y|^{2(n-1)}}dy\rightarrow0.
	\end{equation}
	
	Finally, using the decay properties of $\omega(x)$, $z(y)$ and $v(x)$, we point out that there exists $\tau_{0}$ such that for any $\delta\in\left[\delta_{1}-\tau_{0},\delta_{1}\right]$, it holds 
	\begin{equation}\nonumber
		\begin{aligned}
			C_{\delta}&\left(\|v(x)\|_{L^{\frac{2n}{n-2}}(\Sigma_{\delta}^{v})}^{\frac{2\lambda+n+2-(2\beta+\mu)}{n-2}}\|v(y)\|_{L^{\frac{2(n-1)}{n-2}}(\partial\Sigma_{\delta}^{v})}^{\frac{n-(2\alpha+\mu)}{n-2}}+\|\omega(x)\|_{L^{\frac{2n}{(2\beta+\mu)-2\lambda}}(\Sigma_{\delta}^{v})}\|v(x)\|^{\frac{2\lambda+4-(2\beta+\mu)}{2n}}_{L^{\frac{2n}{n-2}}(\Sigma_{\delta}^{v})}\right.\\
			&\left.+\|z(y)\|_{L^{\frac{2(n-1)}{n-2}}(\partial\Sigma_{\delta}^{v})}\|v(y)\|^{\frac{2-(2\alpha+\mu)}{2(n-1)}}_{L^{\frac{2(n-1)}{n-2}}(\partial\Sigma_{\delta}^{v})}\right)< \frac{1}{2}.
		\end{aligned}
	\end{equation}
	Therefore, by using Lemma \ref{la1} and Lemma \ref{la2}, we have for any $\delta\in\left[\delta_{1}-\tau_{0},\delta_{1}\right]$,
	\begin{equation}\nonumber
		\omega(x)\leq\omega(x^{\delta}),~ v(x)\leq v_{\delta}(x),~z(y)\leq z(y^{\delta}),\ \ x\in\Sigma_{\delta},~~ y\in\partial\Sigma_{\delta},
	\end{equation}
	which contradicts the definition of $\delta_{1}$. The proof is finished.
\end{proof}

It's clear that, the proof of Theorem \ref{app1} relies on the above results.

\textbf{Proof of Theorem \ref{app1}.} In fact, we can move the plane $T_{\delta}$ from $\infty$ to the left, and continue this proof until $\delta=\delta_{1}$. If $\delta_{1}>p_{1}$, then we obtain
\begin{equation}\nonumber
	\omega(x)\equiv\omega(x^{\delta_{1}}),~z(y)\equiv z(y^{\delta_{1}}),~v(x)\equiv v_{\delta_{1}}(x),\ \	x\in\Sigma_{\delta_{1}},~y\in\partial\Sigma_{\delta_{1}}
\end{equation}
However, this is impossible. Therefore, we get $\delta_{1}\leq p_{1}$. Similarly, we can move the plane from $-\infty$ to the right as we did in the previous discussion, then we derive $\delta_{1}^{\prime}$ and $\delta_{1}^{\prime}\geq p_{1}$. Furthermore, we conclude $\delta_{1}=\delta_{1}^{\prime}=p_{1}$. Since $x_{1}$ direction can be taken arbitrarily, then the fact implies that, $\omega(x)$, $v(x)$ and $z(y)$ are symmetric with respect to any plane, which is passing through $p$ and perpendicular to $x_{i}~\left(i=1,2...n-1\right)$ axis. 

Finally, for any $p\in\partial\mathbb{R}^{n}_{+}$, $v(x)$, $\omega(x)$ and $z(y)$ are symmetric about the plane that passing through $p$ and is parallel to $x_{n}$ axis. Hence we derive that $u(x)$ and $m(x)$ depend only on $x_{n}$ and $\eta$ is constant. This proof is accomplished.
$\hfill{} \Box$

Next, we will pay attention to study the existence of positive solution for Hartree type equations \eqref{main1}. By virtue of Theorem \ref{app1}, we give the proof of Corollary \ref{rem}.

\textbf{Proof of Corollary \ref{rem}.} Naturally, according to Theorem \ref{app1}, we conclude that $m(x)$, $u(x)$ depend only $x_{n}$ and $\eta(y)$ is constant function. As a consequence, we can rewrite the equations \eqref{main1} in the following form
\begin{equation}\nonumber
\left\lbrace 
\begin{aligned}
&-\frac{d^{2}u(x_{n})}{dx_{n}^{2}}=\left(\int_{\partial\mathbb{R}^{n}_{+}}\frac{F\left(u(0)\right)}{|x-y|^{\mu}}dy\right)x_{n}^{\lambda}g\left(u(x_{n})\right),\ \ x_{n}>0,\\
&-\frac{\partial u}{\partial x_{n}}(0)=\left(\int_{\mathbb{R}^{n}_{+}}\frac{G\left(u(x_{n})\right)x_{n}^{\lambda}}{|x-y|^{\mu}}dx\right)f\left(u(0)\right),
\end{aligned}
\right.
\end{equation}

It is worth noting that the first equation yield $u(x)$ is concave function. Moreover, from the later equation, we have
\begin{equation}\nonumber
\frac{d u}{d x_{n}}(0)=\left(\int_{\mathbb{R}^{n}_{+}}\frac{G\left(u(x_{n})\right)x_{n}^{\lambda}}{|x-y|^{\mu}}dx\right)f\left(u(0)\right)\geq0.
\end{equation}
Thus, we conclude that $u(x)$ is concave and decreasing unless $u\equiv \tilde{c}$ with $F(\tilde{c})=G(\tilde{c})=0$. On the one hand, if $u$ is strictly decreasing with respect to $x_{n}$, then we have $\frac{d u}{d x_{n}}(0)\leq0$, which is impossible. The fact implies that this case will not happen. Hence we immediately deduce the desired result. The proof is accomplished.
$\hfill{} \Box$

\end{document}